\documentclass[12pt,reqno]{article}
\usepackage{amssymb}
\usepackage{amsmath}
\usepackage{mathrsfs}
\usepackage{amsfonts}
\usepackage{bm}
\usepackage[colorlinks,linkcolor=red,anchorcolor=green,citecolor=blue]{hyperref}
\usepackage{graphicx,cite,enumerate,tikz,amsthm,amscd,bm,amssymb,changepage,mathrsfs}
\usepackage{indentfirst}
\newtheorem{proposition}{Proposition}[section]
\newtheorem{lemma}[proposition]{Lemma}
\newtheorem{theorem}[proposition]{Theorem}

\newtheorem{remark}[proposition]{Remark}

\linespread{1.2}

\renewcommand{\theequation}{\thesection.\arabic{equation}}
\setlength{\oddsidemargin}{2.5mm}

\renewcommand{\oddsidemargin}{5mm}


\begin{document}
	\parindent 15pt
	\renewcommand{\theequation}{\thesection.\arabic{equation}}
	\renewcommand{\baselinestretch}{1.15}
	\renewcommand{\arraystretch}{1.1}
	\renewcommand{\vec}[1]{\bm{#1}}
	\title{Optimal uniform regularity and asymptotic behavior of solutions to Lotka-Volterra type systems with strong competition and asymmetric coefficients  \thanks{This work is supported by National Natural Science Foundation of China(12401136,12471113).}}
	\author{Zexin Zhang\thanks{Email: zhangzexin@amss.ac.cn}\\
		{\small School of Mathematical Science, Jiangsu University, Zhenjiang 212013, P. R. China.}
	}
	\date{}
	\maketitle
	\begin{abstract}
		In this paper, we investigate the uniform regularity and asymptotic behavior of solutions to the following Lotka-Volterra type system of strong competition with Dirichlet boundary conditions:
		\begin{align*}
			\left\{
			\begin{array}{ll}
				-\Delta u_{i,\beta} = f_{i,\beta}(x, u_{i,\beta}) - \beta u_{i,\beta}^{p_i} \sum_{\substack{j=1 \\ j \neq i}}^k a_{ij} u_{j,\beta}^{p_j}, \quad u_{i,\beta} > 0 & \text{in } \Omega, \\
				u_{i,\beta} = \varphi_{i,\beta} & \text{on } \partial\Omega,
			\end{array}\right.
		\end{align*}
		where $N \geq 1$, $1 \leq i \leq k$ with $k \geq 3$, $\beta > 0$, $p_i \geq 1$, $a_{ij} > 0$ for $i \neq j$, and $\Omega$ is a $C^{1,\text{Dini}}$ bounded domain in $\mathbb{R}^N$. First, we prove that the uniform boundedness of the solutions implies their uniform interior and global Lipschitz boundedness as $\beta \to +\infty$. Such uniform results are optimal; partial versions thereof are known in the literature for symmetric coefficients (i.e., $a_{ij} = a_{ji}$ for all $i \neq j$) and homogeneous competition terms (i.e., $p_i = p_j$ for all $i\neq j$). Here, we establish an Alt-Caffarelli-Friedman type monotonicity formula for the system and then employ blow-up analysis to show that these results also hold in the asymmetric or nonhomogeneous case. Next, as consequences of the uniform optimal regularity, we derive sharp quantitative pointwise estimates for the densities near the interface between different components.\\
		\textbf{Keywords:} Lotka-Volterra systems; uniform Lipschitz bounds; blow-up analysis; monotonicity formulae; strong competition; asymptotic behavior.\\
		\textbf{2020 Mathematics Subject Classification:} 35J57, 35B40, 35B44, 35B09.
	\end{abstract}
	
	\section{Introduction}\label{sec:1}
	\setcounter{section}{1}
	\setcounter{equation}{0}
	
	In this paper, we are concerned with the optimal uniform regularity and asymptotic behavior of a sequence of weak solutions to the following strongly competing systems:
	\begin{align}\label{Sys:main-interior}
		\left\{
		\begin{array}{ll}
			-\Delta u_{i,n}=f_{i,n}(x,u_{i,n})-\beta_n u_{i,n}^{p_i}\sum\limits_{j\neq i}^ka_{ij}u_{j,n}^{p_j}\quad&\text{in}~~\Omega,\\
			u_{i,n}>0\quad&\text{in}~~\Omega
		\end{array}\right.
	\end{align}
	with Dirichlet boundary condition
	\begin{align}\label{Boundary:main}
		u_{i,n}=\varphi_{i,n}\quad\text{on}~~\partial\Omega,
	\end{align}
	where $N\geq 1$, $k\geq 3$, $1\leq i\leq k$, $\beta_n>0$, $p_i\geq 1$, $a_{ij}>0$ for any $i\neq j$, and  $\Omega$ is a  bounded $C^{2}$ domain in $\mathbb{R}^N$. Here, the internal reaction term $\bm{f}_{n}: \Omega \times \mathbb{R}^+ \to \mathbb{R}^k$ and the boundary datum $\bm{\varphi}_{n}: \partial\Omega \to \mathbb{R}^k$ satisfy the following conditions, respectively.\\
	(H1)  $\bm{f}_{n}:\Omega\times \mathbb{R}^+\to \mathbb{R}^k$ is a Carath\'{e}odory (vector) function (i.e., $u\mapsto \bm{f}_{n}(x,u)$ is continuous in $\mathbb{R}^+$ for almost $x\in \Omega$, and $x\mapsto \bm{f}_{n}(x,u)$ is Lebesgue measurable in $\Omega$ for any $u>0$); moreover, for any $m>0$, there exists $d_m>0$ independent of $n$ such that
	$$\|f_{i,n}(\cdot,s)\|_{L^{\infty}(\Omega)}\leq d_m,\quad \forall 0<s\leq m,~~ \forall i=1,\dots,k.$$
	(H2) $\bm{\varphi}_{n}: \partial\Omega \to \mathbb{R}^k$ is a componentwise nonnegative $C^{1,\text{Dini}}$ function satisfying 
	\begin{align}\label{Condi:boundary-disjoint}
		\varphi_{i,n}\cdot \varphi_{j,n}\equiv 0\quad\text{on}~~\partial\Omega,\quad \forall i\neq j,\quad \forall n\geq 1,
	\end{align}
	and 
	$$\|\varphi_{i,n}\|_{C^{1,\mathcal{A}}(\partial\Omega)}:=\|\varphi_{i,n}\|_{C^{1}(\partial\Omega)}+\sum_{j=1}^{N}\left[\partial_{x_j}\varphi_{i,n}\right]_{\mathcal{A};\partial\Omega}\leq \bar{m},\quad \forall 1\leq i\leq k$$
	for some constant $\bar{m}>0$ independent of $n$, where $\mathcal{A}:[0,+\infty)\to [0,+\infty)$ is a fixed (i.e. independent of $n$) non-decreasing function satisfying 
	$$\lim\limits_{t\to 0^+}\mathcal{A}(t)=0,~~\int_{0}^{d}\frac{\mathcal{A}(t)}{t}dt<+\infty,~~\frac{\mathcal{A}(t)}{t^{\alpha}}~\text{is non-increasing in $(0,d)$},~~\liminf_{t\to 0^+}\frac{\mathcal{A}(2t)}{\mathcal{A}(t)}>1$$
	for some fixed constants $d>0$ and $\alpha\in (0,1)$, and 
	$$\left[\partial_{x_j}\varphi_{i,n}\right]_{\mathcal{A};\partial\Omega}:=\sup_{x,y\in \partial\Omega\atop x\neq y}\frac{|\partial_{x_j}\varphi_{i,n}(x)-\partial_{x_j}\varphi_{i,n}(y)|}{\mathcal{A}(|x-y|)}.$$
	\par 
	Here and throughout the remainder of this paper, we use the boldface letter $\bm{w}$ to denote a $k$-component vector-valued function, i.e., $\bm{w} = (w_1, \dots, w_k)$. We denote by $B_r(x_0)$ the open ball in $\mathbb{R}^N$ with radius $r > 0$ and center $x_0$. For notational simplicity, we also write $B_r := B_r(0)$.
	\par 
	When $p_i = 1$ for all $1 \leq i \leq k$, system \eqref{Sys:main-interior} reduces to the well-known Lotka-Volterra system, a fundamental mathematical framework extensively employed in the field of population dynamics, see e.g.  \cite{Conti-Terracini-Verzini-2005adv,Dancer-D1995NA-1,Dancer-D1995NA-2,book:Murray2003}. In this context, $u_{i,n}$ denotes the population density of the $i$-th species; its intrinsic (internal) dynamics are governed by the function $ f_{i,n}$; the positive constants $\beta_n \cdot a_{ij}$ quantify the interaction strength between the $i$-th species $ u_{i,n}$ and the $j$-th species $ u_{j,n}$;  the larger $\beta_n$ is, the stronger the interaction strength becomes; moreover, the limit densities exhibit spatial segregation as $\beta_n \to +\infty$. Notice that the interspecific interaction may be asymmetry (i.e., $a_{ij} \neq a_{ji}$ for some $i\neq j$); in this case, spiraling waves arise in the singular limit of the system when $\beta_n\to+\infty$ as $n\to\infty$ (see e.g. \cite{Terracini-V-Z2019CPAM,Salort-Terracini-VZ2025analPDE}). Spiraling waves also occur in other contexts of reaction-diffusion systems (see e.g. \cite{book:Murray2003}). For general $p_i\geq 1$, system \eqref{Sys:main-interior} appears in the modeling of diffusion flames (\cite{Caffarelli-R2007ARMA}), and becomes the Lotka-Volterra system for the fast-diffusion equation by the change of variable $v_{i,n}:=u_{i,n}^{p_i}$ (\cite{book:Vazquez-2007,Soave-Z2015ARMA}).
	\par 
	The existence of solutions of system \eqref{Sys:main-interior} attracts many scholars to study and has been obtained e.g. in \cite{Eilbeck-F-LG-JDE1994,Conti-Terracini-Verzini-2005adv,Dancer-D1995NA-1,Dancer-D1995NA-2}. The present paper focuses on another object of an intense research in the last two decades: the asymptotic behaviour of solutions $\{\bm{u}_{n}\}$ of system \eqref{Sys:main-interior} as $\beta_n\to+\infty$, where $\{\bm{u}_{n}\}$ is uniformly bounded in $L^{\infty}(\Omega;\mathbb{R}^k)$ in the sense that there exists $m>0$ independent of $n$ such that
	\begin{align}\label{Condi:Bound-u}
		\max_{1\leq i\leq k}\|u_{i,n}\|_{L^{\infty}(\Omega)}\leq m.
	\end{align}
	In many cases, condition \eqref{Condi:Bound-u} can be derived by imposing suitable boundary data on $\partial\Omega$ and applying the maximum principle.
	\par 
	In the case of two species, i.e., $k=2$, by assuming $\bm{f}_{n}\equiv 0$ in $\Omega$ and $p_1=p_2=1$, Conti et al. \cite{Conti-Terracini-Verzini-2005adv} used a blow-up technique to prove that if $\{\bm{u}_{n}\}\subseteq H^1(\Omega;\mathbb{R}^k)$ is a sequence of weak solutions to  \eqref{Sys:main-interior}-\eqref{Boundary:main} with $\beta_n\to +\infty$ (as $n\to+\infty$) and $\bm{\varphi}_{n}$ satisfying (H2), then there exists $M>0$ independent of $n$ such that 
	$$\|\bm{u}_{n}\|_{\text{Lip}(\Omega)}:=\|\bm{u}_{n}\|_{L^{\infty}(\Omega;\mathbb{R}^k)}+\|\nabla\bm{u}_{n}\|_{L^{\infty}(\Omega;\mathbb{R}^{kN})}\leq M,\quad \forall n\geq 1.$$
	Notice that in this case, system \eqref{Sys:main-interior} admits reduction to a single governing equation, significantly simplifying analytical procedures. We remark that this uniform regularity is optimal as the limiting profile $\bm{u}$ is at most Lipschitz continuous in $\Omega$ by the Hopf lemma. Afterwards, for general $p_i\geq 1(i=1,2)$, Caffarelli and Roquejoffre \cite{Caffarelli-R2007ARMA} provided an induction argument to prove that if $\{\bm{u}_{n}\}\subseteq H^1(\Omega;\mathbb{R}^k)$ is a sequence of weak solutions to  \eqref{Sys:main-interior}-\eqref{Condi:Bound-u} with $\beta_n\to +\infty$, then for any compact set $K\Subset \Omega$,  there exist $\alpha\in (0,1)$ such that 
	$\{\bm{u}_{n}\}$ is bounded in the H\"{o}lder space $C^{\alpha}(K;\mathbb{R}^k)$. This uniform interior (or local) regularity result was later refined to the optimal regularity setting by Soave and Zilio \cite{Soave-Z2015ARMA}, whose proof hinges on blow-up analysis and the celebrated almost monotonicity formula of Caffarelli-Jerison-Kenig \cite{Caffarelli-J-K2002annals}. It is worth pointing out that in this two-species case, the asymmetry of the matrix $(a_{ij})$ does not pose any difficulty, since the normalization of each $a_{ij}$ to unity can be achieved via a simple rescaling of the unknowns; moreover, the competition reaction function 
	\begin{align}\label{def:competition-reaction}
		\bm{\hat{g}}(\bm{u}):=\left(u_{1}^{p_1}\sum\limits_{j\neq 1}^ka_{1j}u_{j}^{p_j},\dots,u_{k}^{p_k}\sum\limits_{j\neq k}^ka_{kj}u_{j}^{p_j}\right)
	\end{align}
	is always homogeneous with respect to $\bm{u}$, for any $p_1,p_2\geq 1$, which plays a pivotal role in blow-up analysis.
	\par 
	In the case of multiple species, i.e., $k\geq 3$, the uniform H\"{o}lder regularity of solutions to \eqref{Sys:main-interior}-\eqref{Boundary:main} with $p_i=1$ for all $1\leq i\leq k$ was deduced by Conti et al. \cite{Conti-Terracini-Verzini-2005adv} via blow-up analysis and an Liouville-type result for H\"{o}lder continuous functions (\cite[Theorem 5]{Conti-Terracini-Verzini-2005adv}). Note that this Liouville-type theorem is not applicable to Lipschitz continuous functions; consequently, the procedure cannot be applied to obtain optimal regularity. 
	Later, Wang and Zhang \cite{Wang-Zhang2010poincare} employed the Kato inequality in conjunction with the maximum principle to establish the uniform global Lipschitz regularity of solutions to \eqref{Sys:main-interior}-\eqref{Boundary:main}, under the assumptions that $p_i=1$ for all $i$, the matrix $(a_{ij})$ is symmetric (i.e., $a_{ij}=a_{ji}$ for any $i\neq j$), and suitable conditions are imposed on the internal reaction function $\bm{f}_{n}$ (stronger than (H1)). They also obtained analogous results in the parabolic setting.
	Soave and Zilio \cite{Soave-Z2015ARMA} subsequently investigated the uniform local Lipschitz regularity of solutions to system \eqref{Sys:main-interior} in the general case where $p_i \geq 1$ for all $i$, with the coefficient matrix $(a_{ij})$ being totally symmetric (i.e., $a_{ij} = 1$ for all $i \neq j$). Their analysis relied on the blow-up technique and the almost monotonicity formula of Caffarelli-Jerison-Kenig. However, a critical observation is that their proof implicitly exploited the homogeneity property of the competition term $\bm{\hat{g}}$ as defined in \eqref{def:competition-reaction}. Note that in this case (i.e., $k\geq 3$), the homogeneity of $\bm{\hat{g}}$ implies $p_1=p_2=\cdots=p_k$.
	Therefore, it remains an open problem whether analogous optimal regularity results hold in the scenarios where the coefficient matrix $(a_{ij})$ is asymmetric or the competition term $\bm{\hat{g}}$ is nonhomogeneous. The present paper aims to provide a positive answer to this open question.
	\par 
	Some interesting extensions regarding the uniform regularity of solutions to system \eqref{Sys:main-interior} have been reported in the literature. Specifically, we refer to Wei and Weth \cite{Wei-W2008Nonlinearity} for results on the uniform equicontinuity of solutions to a wider class of elliptic systems (in dimension $N=2$); to Quitalo \cite{Quitalo2013ARMA} for those on the uniform H\"{o}lder regularity associated with extremal Pucci operators; to Verzini and Zilio \cite{Verzini-Z2014CPDE} for those on the uniform H\"{o}lder regularity associated with fractional operators; and more recently, to Soave and Terracini \cite{Soave-Terracini2023jems} for developments concerning the uniform H\"{o}lder regularity related to anisotropic operators.
	We would also like to mention that the problem of uniform regularity of solutions to Gross-Pitaevskii type systems with strong competition has attracted significant attention from many scholars \cite{Conti-Terracini-Verzini-2002poincare,Conti-Terracini-Verzini-2003JFA,Chang-LLL2004PhysD,Caffarelli-L2008JAMS,Berestycki-LCWZ2013ARMA,Dias-T2023na,Dancer-WZ2012JFA,Caffarelli-K-L2009fixpoint,Soave-Z2015ARMA,Noris-TV2010CPAM,Soave-T-T-Z2016-na,Soave-Terracini2023jems,Zhang-Zhang2025}. 
	We remark that if $p_i = 1$ for all $i$ and the matrix $(a_{ij})$ is symmetric, then as shown in \cite{Caffarelli-K-L2009fixpoint, Tavares-T2012CVPDE}, an Almgren type monotonicity formula is available for the limiting profiles of solutions to system \eqref{Sys:main-interior} with $\bm{f}_n\equiv 0$ in $\Omega$ and $\lim_{n\to\infty}\beta_n=+\infty$---analogous to the variational Gross-Pitaevskii system---and one can thus employ standard tools (e.g., dimension estimates) to study the free boundaries of the corresponding limiting profiles. Note that the availability of the Almgren type monotonicity formula allowed Soave and Zilio \cite{Soave-Z2015ARMA} to derive the uniform interior optimal regularity of solutions to the Gross-Pitaevskii system with strong competition. However, such a monotonicity formula appears to be unavailable for system \eqref{Sys:main-interior} when $(a_{ij})$ is asymmetric (see \cite{Terracini-V-Z2019CPAM}), which renders the uniform regularity problem more involved.
	\par 
	Very recently, in cooperation with Zhang, we \cite{Zhang-Zhang2025} proposed a new blow-up argument for deducing the uniform local (and global) Lipschitz regularity of solutions to the Gross-Pitaevskii system. This novel argument enables us to eliminate both the pre-compactness assumption on $\{\bm{f}_{n}\}$ and the dimensional restriction, which constitute technical conditions in the proof of \cite[Theorem 1.3]{Soave-Z2015ARMA}. In this paper, we first demonstrate that the aforementioned method can be adapted to the non-variational system \eqref{Sys:main-interior}, thereby establishing the uniform local optimal regularity of its solutions. Specifically, to achieve this objective, we develop an Alt-Caffarelli-Friedman type monotonicity formula for system \eqref{Sys:main-interior} (see Theorem \ref{thm:ACF-formula}), which constitutes one of the key contributions of the present work. Furthermore, to address the challenge stemming from the lack of homogeneity in the competition term $\bm{\hat{g}}$, we propose an inductive argument to construct a new blow-up sequence (see Lemma \ref{lem:new-u-n-property}); this construction guarantees the applicability of the Alt-Caffarelli-Friedman type monotonicity formula in the current framework. Indeed, we can obtain the following more general result:
	\begin{theorem}\label{thm:interior-lip}
		Let $N \geq 1$ and $\Omega$ be a domain in $\mathbb{R}^N$ (not necessarily proper, bounded, or smooth). Let $k \geq 3$, $p_i \geq 1$, and $a_{ij} > 0$ for all $1 \leq i, j \leq k$ with $i \neq j$. For each $n \in \mathbb{N}$, let $\bm{u}_n \in H^1_{\text{loc}}(\Omega; \mathbb{R}^k) \cap C_{\text{loc}}(\Omega; \mathbb{R}^k)$ be a weak solution to the following system:
		\begin{align}\label{Sys:main-interior-general}
			\left\{
			\begin{array}{ll}
				-\Delta u_{i,n} = f_{i,n}(x, u_{i,n}) - u_{i,n}^{p_i} \sum_{j\neq i}^k a_{ij} \beta_{ij;n} u_{j,n}^{p_j} & \text{in } \Omega, \\
				u_{i,n} > 0 & \text{in } \Omega,
			\end{array}\right. \quad 1 \leq i \leq k,
		\end{align}
		where $\beta_{ij;n} = \beta_{ji;n} > 0$ for all $i \neq j$, and $\bm{f}_n$ satisfies condition (H1). Suppose that $\{\bm{u}_n\}$ is uniformly bounded in $L^{\infty}(\Omega; \mathbb{R}^k)$ (i.e., \eqref{Condi:Bound-u} holds). Then for any compact set $K \Subset \Omega$, there exists a constant $M > 0$ independent of $n$ such that
		$$
		\|\bm{u}_n\|_{\text{Lip}(K)} = \|\bm{u}_n\|_{L^{\infty}(K; \mathbb{R}^k)} + \|\nabla \bm{u}_n\|_{L^{\infty}(K; \mathbb{R}^{kN})} \leq M.
		$$
	\end{theorem}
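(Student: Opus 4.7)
The plan is to argue by contradiction and use a blow-up analysis, following the spirit of the argument for the Gross--Pitaevskii case in \cite{Zhang-Zhang2025} but now powered by the Alt--Caffarelli--Friedman type monotonicity formula (Theorem \ref{thm:ACF-formula}) that replaces the unavailable Almgren formula in the asymmetric setting. Suppose the conclusion fails: there exist a compact $K \Subset \Omega$, a compact $K' \Subset \Omega$ with $K \Subset K'$, an index $i_0$, a subsequence (still indexed by $n$), and points $x_n \in K$ such that
$$
L_n := |\nabla u_{i_0,n}(x_n)| = \max_{1 \leq i \leq k} \|\nabla u_{i,n}\|_{L^{\infty}(K)} \longrightarrow +\infty.
$$
By standard interior $W^{2,p}$ estimates applied away from the interface, one may assume $x_n$ lies on (or near) the free boundary separating the supports of two components in the limit; the uniform $L^{\infty}$-bound \eqref{Condi:Bound-u} together with $L_n \to \infty$ forces a natural small scale $r_n \to 0$ at which a nontrivial blow-up profile can be extracted.

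Next, I would build a rescaled sequence of the form $v_{i,n}(y) := u_{i,n}(x_n + r_n y) / (L_n r_n)$ on expanding balls $B_{R_n}$ with $R_n \to \infty$, normalized so that $\|\nabla v_{i_0,n}\|_{L^{\infty}(B_1)} = 1$. The main obstacle, and the principal innovation of the paper, enters precisely here: because the $p_i$ need not all coincide, the competition nonlinearity $\bm{\hat g}$ is not homogeneous, and the naive rescaling produces terms $L_n^{p_i + p_j - 2}$ that scale differently for different pairs $(i,j)$. Consequently, the limit system is not immediately of the segregation type amenable to the ACF formula. This is where I would invoke the inductive construction of Lemma \ref{lem:new-u-n-property}: one successively modifies the blow-up scale and the selected pair of interacting components, restarting the argument on a shorter list of indices whenever a subcritical pair is identified, until a reduced blow-up sequence is obtained whose pairwise interaction terms are balanced and whose limit is a genuine two-phase competing profile with bounded Lipschitz norm.

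With such a properly rescaled sequence, uniform $C^{\alpha}$ estimates (as in \cite{Conti-Terracini-Verzini-2005adv,Soave-Z2015ARMA}) together with standard convergence arguments give, up to subsequence, a limit $\bm{v}_\infty \in \mathrm{Lip}_{\mathrm{loc}}(\mathbb{R}^N; \mathbb{R}^k)$ such that (i) each $v_{i,\infty}$ is nonnegative and harmonic on $\{v_{i,\infty} > 0\}$, (ii) the supports of $v_{i,\infty}$ and $v_{j,\infty}$ are essentially disjoint for $i \neq j$, (iii) $\bm{v}_\infty$ is globally Lipschitz with $|\nabla v_{i_0,\infty}(0)| = 1$, and (iv) at least two components are nontrivial because of the choice of blow-up point. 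In particular, $\bm{v}_\infty$ falls within the class of admissible profiles for the ACF type monotonicity formula developed in Theorem \ref{thm:ACF-formula}.

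Finally, I would apply the ACF formula to $v_{i_0,\infty}$ against another nontrivial component at the origin. Monotonicity combined with the global Lipschitz bound forces the ACF functional to be constant, so the classical rigidity part of the formula (the equality case) identifies $\bm{v}_\infty$ with a pair of opposite linear profiles supported on complementary half-spaces. A direct check against the remaining components (or a simple dimensional argument using the disjointness of supports when more than two components survive) rules out coexistence with any other nontrivial $v_{j,\infty}$, but the blow-up procedure has been set up precisely so that a third component, or the asymmetric coefficients, produce an obstruction; one then derives a contradiction with either the normalization $|\nabla v_{i_0,\infty}(0)| = 1$ or with the equation satisfied by the linear profiles across the interface. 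I expect the technical heart of the proof to be the careful scale selection in the inductive step, ensuring both that the ACF formula applies and that no interaction term degenerates out of the limit; everything downstream is then a controlled use of elliptic regularity and the monotonicity rigidity.
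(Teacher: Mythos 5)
Your high-level plan matches the paper up to the point where you must actually extract a contradiction: the contradiction argument, contrary to the proof, argues by blow-up, recognizes that nonhomogeneity of $\bm{\hat g}$ obstructs a direct rescaling, and identifies Lemma \ref{lem:new-u-n-property} as the inductive renormalization that fixes this. However, the mechanism you propose for the final contradiction is different from the paper's and, as written, does not close.

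There are two concrete problems. First, you claim that ``monotonicity combined with the global Lipschitz bound forces the ACF functional to be constant'' on the limit profile $\bm{v}_\infty$, and then invoke the equality case. This is not established by the ingredients you list: monotonicity plus a Lipschitz upper bound only gives that the ACF quantity converges as $r\to\infty$; making it constant on a whole interval requires a second blow-up or a scaling argument that you have not supplied, and the paper does not take this route. Second, and more fatally, even if rigidity were available and the limit were a pair of opposite linear profiles on complementary half-spaces, this would produce \emph{no} contradiction: a linear profile with $|\nabla v_{i_0,\infty}(0)|=1$ across a flat interface is exactly what the normalization allows, and segregation is consistent with complementary half-spaces. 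You suggest that a third component or the asymmetry ``produces an obstruction,'' but you do not say what the obstruction is, and in fact the third components may simply vanish in the limit.

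The paper instead applies the \emph{perturbed} ACF formula (Theorem \ref{thm:ACF-formula}) to the rescaled \emph{sequence} $\{\bm{v}_n\}$ directly, not the classical ACF to the limit. The contradiction is the tension between a uniform positive lower bound $J_{1,n}(2)J_{2,n}(2)/2^4\ge C''$ (Lemma \ref{lem:interior-lip-J-Lambda}, which uses Lemma \ref{lem:new-u-n-property}~(6) to guarantee that \emph{two} components have non-degenerate $H_i$ at $r=2$), and a decay $H_1(\bm{v}_n,0,s_n)H_2(\bm{v}_n,0,s_n)/s_n^4\to 0$ for some $s_n\in[2,\tilde r_n]$ (Lemma \ref{lem:H-1-2-divide-s-4-to0}), where the latter is converted into $J_{1,n}(s_n)J_{2,n}(s_n)/s_n^4\to 0$ via the Lipschitz estimate of Lemma \ref{lem:est:J-i(r)}~(2). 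Since $J_1 J_2/r^4$ (corrected by a uniformly bounded exponential factor) is monotone non-decreasing on $[2,\tilde r_n]$ by Proposition \ref{prop:monotonicity-formula-v}, the two bounds contradict each other. The proof that the product $H_1 H_2/s^4$ must decay (Lemma \ref{lem:H-1-2-divide-s-4-to0}, Step 1) is where the specific choice of blow-up scale $r_n=\sum_i (\eta u_{i,n})(x_n)/L_n$ and the divergence $L_n\to\infty$ are actually exploited; none of this appears in your sketch. In short, you should replace the rigidity/equality-case argument with the small-scale lower bound vs.\ large-scale decay tension, and be aware that the perturbed ACF formula must be applied to $\{\bm{v}_n\}$ on the maximal admissible interval $[2,\tilde r_n]$, not to the blow-up limit.
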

	\par 
	Next, we investigate the uniform global Lipschitz regularity of solutions to system \eqref{Sys:main-interior-general}-\eqref{Boundary:main}. As mentioned above, the authors in \cite{Wang-Zhang2010poincare} established the uniform global Lipschitz regularity of solutions within their framework. We point out that their proof required differentiating the system, and thus relied on the differentiability of $\bm{f}_n$. In \cite[Theorem 1.6]{Soave-Z2015ARMA}, the authors claimed that, with the aid of the Caffarelli-Jerison-Kenig monotonicity formula for variable-coefficient systems \cite{Matevosyan-P2011CPAM}, they could prove the uniform global Lipschitz regularity of solutions in their setting via the same approach used for the uniform interior case. However, this approach appears to be unfeasible. On the one hand, the monotonicity formula is of local type; to apply it near the domain boundary, one would need to extend the solutions to a larger domain, which seems impossible for general boundary data $\bm{\varphi}_n$. On the other hand, we lack a critical counterpart to Lemma \ref{lem:decay-Mu-p} for analyzing the boundary behavior of the solutions.  
	To address this problem, we first analyze the boundary behavior of the solutions by leveraging condition \eqref{Condi:boundary-disjoint}, and then employ blow-up analysis to derive the desired result. Notably, the blow-up argument differs from that utilized in the proof of Theorem \ref{thm:interior-lip}. Specifically, in the present setting, the blow-up sequence diverges at the origin; however, we can still prove that the gradient of the sequence remains bounded by applying Theorem \ref{thm:interior-lip}, which yields a contradiction.
	We remark that boundary condition \eqref{Condi:boundary-disjoint} plays a crucial role in our proof: it is not only used to analyze the boundary behavior of the solutions, but also enables the application of Lemma \ref{lem:decay-Mu-p} near the boundary by extending the solutions by zero outside the domain. Note that this condition is also adopted in \cite{Conti-Terracini-Verzini-2005adv, Wang-Zhang2010poincare} and ensures the existence of solutions to system \eqref{Sys:main-interior} in many cases (see, e.g., \cite[Theorem 2.1]{Conti-Terracini-Verzini-2005adv}). Our global result is as follows.
	\begin{theorem}\label{thm:global-lip}
		Let $N \geq 1$, and let $\Omega \subset \mathbb{R}^N$ be a bounded $C^{2}$ domain. Let $k \geq 3$, and suppose that $p_i \geq 1$ and $a_{ij} > 0$ for all $1 \leq i, j \leq k$ with $i \neq j$. For each $n \in \mathbb{N}$, let $\bm{u}_n \in H^1_{\text{loc}}(\Omega; \mathbb{R}^k) \cap C(\overline{\Omega}; \mathbb{R}^k)$ be a weak solution to the system \eqref{Sys:main-interior-general} with boundary condition \eqref{Boundary:main}, where $\bm{f}_n$ and $\bm{\varphi}_n$ satisfy assumptions (H1) and (H2) respectively. If $\{\bm{u}_n\}$ is uniformly bounded in $L^{\infty}(\Omega; \mathbb{R}^k)$, then there exists a constant $M > 0$ independent of $n$ such that
		$$
		\|\bm{u}_n\|_{\text{Lip}(\Omega)} = \|\bm{u}_n\|_{L^{\infty}(\Omega; \mathbb{R}^k)} + \|\nabla\bm{u}_n\|_{L^{\infty}(\Omega; \mathbb{R}^{kN})} \leq M.
		$$
	\end{theorem}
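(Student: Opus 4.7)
I would argue by contradiction, reducing the global statement to a blow-up analysis near the boundary and exploiting the interior theorem already established.

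Suppose, towards a contradiction, that $\|\bm{u}_n\|_{\text{Lip}(\Omega)}$ is not uniformly bounded. Since $\{\bm{u}_n\}$ is uniformly bounded in $L^{\infty}$, there exist an index $\iota\in\{1,\dots,k\}$ and points $x_n\in\Omega$ with $L_n:=|\nabla u_{\iota,n}(x_n)|\to+\infty$. Theorem \ref{thm:interior-lip} already gives uniform Lipschitz bounds on every compact subset of $\Omega$; hence $d_n:=\mathrm{dist}(x_n,\partial\Omega)\to 0$ and, up to a subsequence, $x_n\to x_\infty\in\partial\Omega$ with closest points $y_n\in\partial\Omega$ converging to $x_\infty$. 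By (H2) and the disjoint-support condition \eqref{Condi:boundary-disjoint}, there is a neighbourhood $V$ of $x_\infty$ on which at most one boundary datum, say $\varphi_{\iota,n}$, can be nonzero. This index-separation at the boundary is what makes the argument run.

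The next step is a quantitative boundary analysis. Using the $C^{1,\mathrm{Dini}}$ regularity of $\bm{\varphi}_n$, standard barriers for $-\Delta v\le C$, the Hopf lemma and the componentwise $L^\infty$ bound, I would show that in $V\cap\Omega$ each $u_{j,n}$ with $j\neq\iota$ vanishes on $\partial\Omega\cap V$ and decays at least linearly to $0$ there, while $u_{\iota,n}$ admits a uniform $C^1$-modulus of continuity inherited from its boundary trace. The vanishing of the non-$\iota$ components on $\partial\Omega\cap V$ is the content of \eqref{Condi:boundary-disjoint}, and it allows one to extend those components by zero across $\partial\Omega\cap V$ to functions in $H^1$ that still satisfy, in a distributional sense on the enlarged open set, the same kind of differential inequality required for the Alt-Caffarelli-Friedman monotonicity formula of Theorem \ref{thm:ACF-formula} and for the decay estimate in Lemma \ref{lem:decay-Mu-p}. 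This zero-extension is exactly the device emphasized in the paper as the reason \eqref{Condi:boundary-disjoint} is indispensable.

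I would then perform the blow-up at $y_n$, after flattening $\partial\Omega$ locally by a $C^2$ diffeomorphism, with the scale chosen so that the rescaled component corresponding to the blowing-up gradient has $|\nabla\tilde u_{\iota,n}|\equiv 1$ at the rescaled image of $x_n$. Because $\varphi_{\iota,n}(y_n)$ is $O(1)$ while the natural scale is $L_n^{-1}\to 0$, the rescaled sequence carries a bounded nontrivial trace on the rescaled boundary that forces its values to diverge at the origin in the blow-up variable, as flagged in the paragraph preceding the theorem. However, Theorem \ref{thm:interior-lip} applied on balls of scale-one radius centered at points staying at unit distance from the rescaled boundary yields a uniform gradient bound for the blow-up sequence on every compact set away from the origin. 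Combined with the ACF-type decay obtained from the zero-extensions of the $j\neq\iota$ components (via Lemma \ref{lem:decay-Mu-p}), the limit profile must be a bounded-gradient segregated solution whose existence is incompatible with the normalization $|\nabla\tilde u_{\iota,n}|=1$ at the rescaled point--a contradiction.

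The hardest step, and the one where I expect to spend most of the work, is the boundary barrier analysis of paragraph two: obtaining pointwise control of each $u_{j,n}$ with $j\neq\iota$ sufficiently strong, quantitatively in terms of the distance to $\partial\Omega$ and with constants independent of $n$, to guarantee that the zero-extensions interact correctly with the ACF formula on balls centered at $\partial\Omega$. This rests precisely on the $C^{1,\mathrm{Dini}}$ assumption on $\bm{\varphi}_n$ (which supplies the required modulus of continuity up to the boundary for the dominant component) together with the disjoint-support hypothesis (which supplies the zero traces of the remaining components). Once this boundary picture is secured, the blow-up scheme of Theorem \ref{thm:interior-lip} applies almost verbatim with the base point placed on $\partial\Omega$ rather than in the interior, and the proof concludes.
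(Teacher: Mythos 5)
Your overall strategy --- blow-up near the boundary, a preliminary barrier estimate at $\partial\Omega$, zero-extension of vanishing components, and recourse to Theorem \ref{thm:interior-lip} as a black box --- is the correct skeleton and matches the paper's route. However, two of your key claims do not hold as stated, and a third step diverges from what the paper actually does in a way that reopens a difficulty the paper is at pains to avoid.

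First, the assertion that ``by (H2) and \eqref{Condi:boundary-disjoint} there is a neighbourhood $V$ of $x_\infty$ on which at most one boundary datum, say $\varphi_{\iota,n}$, can be nonzero'' is wrong on two counts. Condition \eqref{Condi:boundary-disjoint} is a pointwise disjointness: at any single $y\in\partial\Omega$ at most one $\varphi_{i,n}(y)$ is nonzero, but the active index may change from point to point and from $n$ to $n$, so there is in general no fixed open $V$ and no fixed index on it. Moreover, there is no reason whatsoever for this hypothetically unique active boundary index to coincide with $\iota$, the index whose gradient blows up. The paper instead blows up at $x_n$ with the geometric scale $\bar{r}_n:=\mathrm{dist}(x_n,\partial\Omega)$ and \emph{no} gradient normalization, first proves (Lemma \ref{lem:bahaviour-global}) that $\sum_i u_{i,n}(x_n)/\bar{r}_n\to\infty$ --- this uses Theorem \ref{thm:interior-lip} together with the boundary-Lipschitz bound of Lemma \ref{lem:boundary-est} --- then picks a component $i_0$ (not $\iota$ in general) whose rescaled value diverges, and only \emph{then} uses \eqref{Condi:boundary-disjoint} to deduce that the other rescaled components vanish on $\partial\Omega_n\cap \overline{B_5}$. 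The local single-index picture is an $n$-dependent conclusion drawn from the divergence, not a hypothesis on a fixed neighbourhood $V$.

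Second, you claim that the barrier estimate at $\partial\Omega$ follows from ``standard barriers for $-\Delta v\le C$, the Hopf lemma and the componentwise $L^\infty$ bound.'' The subharmonic inequality $-\Delta u_{i,n}\le f_{i,n}$ gives only the \emph{upper} barrier $u_{i,n}\le\Phi_{i,n}$; a matching \emph{lower} barrier is needed to estimate $|u_{i,n}(x)-u_{i,n}(y)|/|x-y|$, and the paper obtains it from the superharmonicity of $\widehat{u}_{i,n}=u_{i,n}-\sum_{j\ne i}\frac{a_{ij}}{a_{ji}}u_{j,n}$, setting $\Psi_{i,n}=\Phi_{i,n}-\sum_{j\ne i}\frac{a_{ij}}{a_{ji}}\Phi_{j,n}\le\widehat{u}_{i,n}\le u_{i,n}$. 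The disjointness \eqref{Condi:boundary-disjoint} is then used a second time to show $\Psi_{i,n}(y)=\varphi_{i,n}(y)=\Phi_{i,n}(y)$ at every boundary point $y$, which is what makes the two-sided squeeze close up. This is the substance of Lemma \ref{lem:boundary-est}; the Hopf lemma plays no role here and does not by itself produce the two-sided bound with $n$-uniform constants.

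Third, you propose to extend the non-$\iota$ components by zero and then apply the ACF-type monotonicity formula (Theorem \ref{thm:ACF-formula}) on balls centered at $\partial\Omega$. The paper explicitly flags this kind of direct application near the boundary as problematic and avoids it: the zero-extension is combined only with the pointwise decay Lemma \ref{lem:decay-Mu-p} to control the rescaled competition terms, the dominant component $v_{i_0,n}$ is extended by a \emph{constant} (not zero) since it does not vanish at the boundary, and the monotonicity formula is never invoked directly in Section \ref{sec:5}. Instead, Theorem \ref{thm:interior-lip} is applied to the rescaled subsystem over $i\neq i_0$ on a ball $B_1\subset\Omega_n$ (an interior application), and $\nabla v_{i_0,n}(0)$ is bounded by standard interior $C^{1,\alpha}$ estimates on $v_{i_0,n}-v_{i_0,n}(0)$. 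The contradiction is then the concrete inequality $|\nabla u_{i,n}(x_n)|\le C$ for all $i$, against $L_n\to\infty$; no passage to a limit profile or compactness of the blow-up is needed, in contrast to the vague ``the limit must be incompatible with the normalization'' you invoke.

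In summary, the proposal correctly anticipates the need for a boundary Lipschitz estimate, a zero-extension device, and a blow-up feeding into Theorem \ref{thm:interior-lip}, but the index-separation claim is false as stated, the two-sided barrier mechanism is missing, the choice of blow-up scale and center differs from the paper in a way that changes what must be proved, and the proposed use of the ACF formula at the boundary is exactly the route the paper argues cannot be made to work.
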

	\par 
	Some remarks are in order:
	\begin{remark}
		(i) As mentioned above, these uniform boundedness results are optimal if $\beta_{ij;n}\to+\infty$ as $n\to\infty$ for all $i\neq j$, since the limit profile of $\{\bm{u}_{n}\}$ is at most Lipschitz continuous in $\Omega$ (by the Hopf lemma). \\
		(ii) As will be seen in the proofs of Theorem \ref{thm:interior-lip} and Theorem \ref{thm:global-lip}, if the coefficient matrix $(a_{ij})$ is replaced by a sequence of matrices $(a_{ij;n})$ admitting positive lower and upper bounds, the conclusions of these theorems remain valid.\\
		(iii) In contrast to the existing optimal results in the literature \cite{Wang-Zhang2010poincare, Soave-Z2015ARMA}, our theorems dispense with the two technical conditions, namely the symmetry (or total symmetry) of the matrix $(a_{ij})$ and the homogeneity of $\bm{\hat{g}}$. Furthermore, we allow different pairs of competition coefficients to have different magnitudes: specifically, some $\beta_{ij;n}$ (with $i \neq j$) tend to infinity as $n \to \infty$, while others remain bounded.\\
		(iv) Even in the case where $\beta_n=\beta_{ij;n}>0$, $p_i = 1$ for all $i\neq j$ and the matrix $(a_{ij})$ is symmetric but not totally symmetric, the conclusion of Theorem \ref{thm:interior-lip} remains new: the authors in \cite{Soave-Z2015ARMA} only considered the case of total symmetry. Similarly, Theorem \ref{thm:global-lip} is also new in this specific case, as the condition imposed on $\bm{f}_{n}$ is significantly weaker than that in \cite{Wang-Zhang2010poincare}.\\
		(v) When $\Omega = \mathbb{R}^N$, under the assumptions stated in Theorem \ref{thm:interior-lip}, one can argue by contradiction, combined with a translation argument, and then apply Theorem \ref{thm:interior-lip} to deduce the existence of a constant $M > 0$ (independent of $n$) such that
		$$\|\bm{u}_{n}\|_{\text{Lip}(\mathbb{R}^N)}=\|\bm{u}_{n}\|_{L^{\infty}(\mathbb{R}^N;\mathbb{R}^k)}+\|\nabla\bm{u}_{n}\|_{L^{\infty}(\mathbb{R}^N;\mathbb{R}^{kN})}\leq M.$$
	\end{remark}
	\begin{remark}
		It is straightforward to see that the regularity assumption on $\partial\Omega$ in Theorem \ref{thm:global-lip} can be weakened to $C^{1,\text{Dini}}$. On the other hand, in \cite{Conti-Terracini-Verzini-2005adv, Wang-Zhang2010poincare, Soave-Z2015ARMA}, the authors imposed the following uniform regularity condition on the boundary data $\bm{\varphi}_n$ instead of condition (H2):
		\begin{align*}
			\|\varphi_{i,n}\|_{\text{Lip}(\partial\Omega)} := \|\varphi_{i,n}\|_{L^{\infty}(\partial\Omega)} + \sup_{\substack{x,y \in \partial\Omega, \\ x \neq y}} \frac{|\varphi_{i,n}(x) - \varphi_{i,n}(y)|}{|x - y|} \leq \bar{M}, \quad \forall 1 \leq i \leq k,
		\end{align*}
		where $\bar{M} > 0$ is a constant independent of $n$. However, this condition is not sufficiently regular to guarantee the global Lipschitz continuity of solutions to system \eqref{Sys:main-interior}, even in the harmonic setting. For a concrete counterexample involving harmonic functions in the upper half-space of $\mathbb{R}^2$, we refer the reader to \cite{Li-Wang2009JDE}; below, we provide an alternative counterexample on the unit ball for the reader's convenience. Let $N \geq 2$, fix a point $x_0 \in \partial B_1$, and define the boundary datum $\varphi(y) := |y - x_0|$ for all $y \in \partial B_1$. It is straightforward to verify that $\varphi \in \text{Lip}(\partial B_1)$. Let $u \in C^2(B_1) \cap C(\overline{B_1})$ denote the solution to the Dirichlet problem
		\[
		-\Delta u = 0 \quad \text{in } B_1, \qquad u = \varphi \quad \text{on } \partial B_1.
		\]
		We claim that $u \notin \text{Lip}(\overline{B_1})$. To prove this, assume without loss of generality that $x_0 = \vec{e}_1 := (1, 0, \cdots, 0)$. For $t \in (0, 1)$, let $x_t := (t, 0, \cdots, 0) \in B_1$ (note that $x_t$ converges to $x_0$ as $t \to 1^-$). By the classical Poisson integral representation of harmonic functions on $B_1$, we then deduce that
		\begin{align*}
			\limsup_{t \to 1^-} \frac{|u(x_t) - u(x_0)|}{|x_t - x_0|} \geq C(N) \limsup_{t \to 1^-} \int_{\partial B_1 \cap B_{1-t}(x_0)^c} |y - x_0|^{1 - N} d\sigma_y = +\infty,
		\end{align*}
		where $C(N) > 0$ is a constant depending only on $N$. This divergence directly implies that $u \notin \text{Lip}(\overline{B_1})$. 
	\end{remark}
	\par 
	Finally, as applications of the uniform optimal regularity results established in Theorem \ref{thm:interior-lip}, we investigate sharp quantitative pointwise estimates and the asymptotic behavior of the densities, particularly near the interfaces between distinct components.
	\par 
	First, we establish the following sharp pointwise estimates for solutions to system \eqref{Sys:main-interior-general}. An analogous result for the variational Gross-Pitaevskii system has been obtained in \cite{Soave-Z2017Poincare, Zhang-Zhang2025}.
	\begin{proposition}\label{prop:uniform-upper-bdd}
		Under the assumptions of Theorem \ref{thm:interior-lip}, for any compact set $K \Subset \Omega$, there exists a constant $C > 0$ (independent of $n$) such that 
		$$\beta_{i,j;n} u_{i,n}^{p_i + 1} u_{j,n}^{p_j} \leq C \quad \text{in } K, \quad \forall \, i \neq j.$$
	\end{proposition}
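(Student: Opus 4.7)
I plan to prove the pointwise bound by combining the uniform interior Lipschitz estimate of Theorem \ref{thm:interior-lip} with a two-case analysis at each point. First I would choose a compact set $K'$ with $K \Subset K' \Subset \Omega$; Theorem \ref{thm:interior-lip} then yields a constant $M > 0$ (independent of $n$) such that $|\nabla u_{\ell,n}| \leq M$ on $K'$ for every $\ell$. Then I would fix $i \neq j$ and a point $x \in K$, set $s := u_{i,n}(x)$ and $t := u_{j,n}(x)$ (assuming $s, t > 0$; otherwise the bound is trivial), and split into the easy case $t \geq s$ and the hard case $t < s$.

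In the easy case $t \geq s$, I would set $\rho := s/(2M)$; the Lipschitz bound would give $u_{i,n} \geq s/2$ and $u_{j,n} \geq t/2$ on $B_\rho(x) \subset K'$. Integrating the $u_{i,n}$-equation over $B_\rho(x)$, using the divergence theorem with the Lipschitz bound on the boundary flux, hypothesis (H1), and the nonnegativity of all remaining competition terms, one obtains
$$
a_{ij}\beta_{ij;n}\int_{B_\rho(x)} u_{i,n}^{p_i} u_{j,n}^{p_j}\,dx \leq d_m\,|B_\rho| + MN\omega_N \rho^{N-1}.
$$
Substituting the pointwise lower bounds on the integrand and dividing by $|B_\rho|$ yields $\beta_{ij;n} s^{p_i} t^{p_j} \leq C(1+1/\rho) \leq C'/s$, and hence $\beta_{ij;n} s^{p_i+1} t^{p_j} \leq C'$.

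In the hard case $t < s$, the same integration (now on $B_{t/(2M)}(x)$) only gives $\beta_{ij;n} s^{p_i} t^{p_j+1} \leq C$, which is too weak. To upgrade this, I would work on the larger ball $B_R(x)$ with $R := s/(2M)$, on which $u_{i,n} \geq s/2$, so that $u_{j,n}$ is a nonnegative subsolution of $-\Delta v + K_n v^{p_j} \leq d_m$ with $K_n := a_{ji}(s/2)^{p_i}\beta_{ij;n}$. When $p_j > 1$, the Keller-Osserman interior estimate for subsolutions with superlinear absorption gives
$$
t \leq C_{p_j,N}\bigl[(K_n R^2)^{-1/(p_j-1)} + (d_m/K_n)^{1/p_j}\bigr];
$$
substituting the values of $K_n$ and $R$, raising to the $p_j$-th power, and multiplying by $\beta_{ij;n} s^{p_i+1}$ reduces the problem to bounding $(\beta_{ij;n} s^{p_i+p_j+1})^{-1/(p_j-1)} + s$, which is handled by distinguishing whether $\beta_{ij;n} s^{p_i+p_j+1}$ is $\geq 1$ (first term trivially bounded, second bounded by $m$) or $< 1$ (in which case $t \leq s$ gives directly $\beta_{ij;n} s^{p_i+1} t^{p_j} \leq \beta_{ij;n} s^{p_i+p_j+1} < 1$). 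When $p_j = 1$, Keller-Osserman is not available, so I would use linear comparison instead: $u_{j,n} \leq V$ where $V$ solves $-\Delta V + K_n V = d_m$ in $B_R(x)$ with $V = 3s/2$ on $\partial B_R(x)$ (by Lipschitz, since $u_{j,n} \leq t + MR \leq 3s/2$ on $\overline{B_R(x)}$), and the standard Helmholtz decay estimate $V(x) \leq (3s/2)\psi(c\sqrt{K_n R^2}) + d_m/K_n$, with $\psi:[0,\infty)\to (0,1]$ decreasing and $\xi\mapsto \xi^2\psi(\xi)$ bounded, gives the analogous conclusion $\beta_{ij;n} s^{p_i+1} t \leq C$. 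The hard part will be this second case: the integration identity is not sharp enough in the imbalanced regime $t \ll s$, and one must exploit the different asymptotics of the $u_{j,n}$-equation with absorption, requiring separate treatment of the linear ($p_j = 1$) and superlinear ($p_j > 1$) regimes.
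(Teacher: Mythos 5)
Your proof is correct, but it takes a genuinely different route from the paper. The paper argues by contradiction: assuming $\beta_{ij;n}u_{i,n}(x_n)^{p_i+1}u_{j,n}(x_n)^{p_j}\to\infty$ for a sequence $x_n\to x_0$, it first shows $\varepsilon_n:=u_{i,n}(x_n)+u_{j,n}(x_n)\to 0$ (using the uniform Lipschitz bound, Ascoli--Arzel\`a, and Lemma \ref{lem:decay-Mu-p}), then blows up at scale $\varepsilon_n$ and applies the same decay lemma again to the rescaled functions to reach a contradiction. You instead give a direct pointwise estimate with a two-case split on whether $u_{j,n}(x)\ge u_{i,n}(x)$ or not, using the uniform Lipschitz bound to produce a ball of radius comparable to $u_{i,n}(x)$ on which $u_{i,n}$ is bounded below, and then either integrating the equation or applying a Keller--Osserman/Helmholtz decay estimate. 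Both approaches hinge on exactly the same two ingredients (Theorem \ref{thm:interior-lip} and a decay lemma for absorption); your version avoids compactness and subsequence extraction and is explicitly quantitative, at the cost of a heavier case analysis and of having to re-derive a ``Keller--Osserman with forcing'' bound which, strictly speaking, does not follow from a na\"{\i}ve decomposition and requires a Kato-type truncation argument (apply Lemma \ref{lem:decay-Mu-p-general} to $(u_{j,n}-(2d_m/K_n)^{1/p_j})^+$). In fact you already have the boundary bound $u_{j,n}\le 3s/2$ on $\partial B_R(x)$, so the paper's Lemma \ref{lem:decay-Mu-p} would have handled both $p_j=1$ and $p_j>1$ uniformly and spared you the separate treatments. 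Two small points to make precise: (i) you should enlarge $M$ if necessary so that $m/(2M)\le\operatorname{dist}(K,\partial K')$, guaranteeing $B_\rho(x)\subset K'$; (ii) your $K_n$ should be written with $\beta_{ji;n}$, which equals $\beta_{ij;n}$ by the symmetry hypothesis of Theorem \ref{thm:interior-lip}.
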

	\par 
	Next, following \cite{Conti-Terracini-Verzini-2005indiana,Conti-Terracini-Verzini-2005adv,Terracini-V-Z2019CPAM}, we consider the function class $\mathscr{S}(\Omega)$ defined by 
	\begin{align}
		\mathscr{S}(\Omega):=
		\left\{
		\bm{u}\in H^1_{\text{loc}}(\Omega;\mathbb{R}^k)\cap C_{\text{loc}}(\Omega;\mathbb{R}^k):
		\begin{array}{l}
			\bm{u}\not\equiv 0,~~u_i\geq 0~~\text{in}~~\Omega,~~\forall i;\\
			-\Delta u_i\leq 0,~~-\Delta \widehat{u}_i\geq 0~~\text{in}~~\Omega,~~\forall i;\\
			u_i\cdot u_j\equiv 0~~\text{in}~~\Omega,~~\forall i\neq j.
		\end{array}\right\},
	\end{align}
	where $N \geq 1$, $k\geq 3$, $a_{ij}>0$ for any $i\neq j$, $\Omega$ is a domain in $\mathbb{R}^N$, and the hat operator is defined for $k$-component vector functions $\bm{v} = (v_1, \dots, v_k)$ by $\widehat{\bm{v}} = (\widehat{v}_1, \dots, \widehat{v}_k)$, where 
	\begin{align}\label{def:hat-operator}
		\widehat{v}_i:=v_i-\sum_{j\neq i}\frac{a_{ij}}{a_{ji}}v_j,\quad 1\leq i\leq k.
	\end{align}
	\par 
	Given $\bm{u}\in \mathscr{S}(\Omega)$, the interface (or free boundary) of $\bm{u}$ is defined by 
	\begin{align}
		\Gamma_{\bm{u}}:=\{x\in \Omega:u_i(x)=0,~\forall 1\leq i\leq k\}.
	\end{align}
	Moreover, for any $x \in \Omega$, the multiplicity of $x$ with respect to $\bm{u}$ is defined as
	\begin{align}
		m_{\bm{u}}(x) := \#\left\{ 1 \leq i \leq k : \left| \{ u_i > 0 \} \cap B_r(x) \right| > 0 \text{ for all } r > 0 \right\},
	\end{align}
	where $\#A$ denotes the cardinality of a set $A$, and $|B|$ stands for the Lebesgue measure of a set $B$ in $\mathbb{R}^N$.
	Furthermore, we denote by $\mathcal{L}_{s,\bm{u}}$ the set consisting of all points in $\Gamma_{\bm{u}}$ with multiplicity $0\leq s\leq k$, that is,
	\begin{align}
		\mathcal{L}_{s,\bm{u}}:=\{x\in \Gamma_{\bm{u}}:m_{\bm{u}}(x)=s\}.
	\end{align}
	\par 
	It is straightforward to verify that if $\{\bm{u}_n\}$ is a sequence of weak solutions to system \eqref{Sys:main-interior} where $\beta_n \to +\infty$ and $\bm{f}_n \to 0$ in $L^{\infty}_{\text{loc}}(\Omega; \mathbb{R}^k)$ as $n \to \infty$, and if $\bm{u}_n \to \bm{u}$ in $H^1_{\text{loc}}(\Omega; \mathbb{R}^k) \cap C_{\text{loc}}(\Omega; \mathbb{R}^k)$ as $n \to \infty$, then $\bm{u} \in \mathscr{S}(\Omega)$. Therefore, for simplicity, we focus on the case where $\bm{f}_n\equiv 0$ in $\Omega$ in the following.
	\par 
	When the matrix $(a_{ij})$ is symmetric, the structure of the free boundary $\Gamma_{\bm{u}}$ for $\bm{u} \in \mathscr{S}(\Omega)$ has been studied in \cite{Caffarelli-K-L2009fixpoint, Conti-Terracini-Verzini-2005indiana, Tavares-T2012CVPDE}. It was shown that the Almgren monotonicity formula, a powerful tool, can be applied to deduce that $\Gamma_{\bm{u}}$ is a collection of smooth hypersurfaces (regular subsets) up to a residual set with small Hausdorff dimension. However, if $(a_{ij})$ is asymmetric, this formula is unavailable, and such a result was only derived recently by Terracini et al. \cite{Terracini-V-Z2019CPAM} for $N=2$ under suitable boundary conditions. We analyze the one-dimensional case in the Appendix, while the problem remains open in higher dimensions. In \cite{Caffarelli-K-L2009fixpoint}, the authors established the so-called Clean-Up Lemma, which asserts that if $N \geq 2$ and $x_0 \in \mathcal{R}_{\bm{u}}$, then $m_{\bm{u}}(x_0) = 2$, where 
	\[
	\mathcal{R}_{\bm{u}} := \left\{ x \in \Gamma_{\bm{u}} : \lim_{r \to 0} J_{ij}(\bm{u}, x, r) > 0 \text{ for some } i \neq j \right\},
	\]
	and 
	\begin{align}\label{def:J-ij}
		J_{ij}(\bm{u}, x, r) := \frac{1}{r^4} \int_{B_r} \frac{|\nabla u_i(y)|^2}{|y - x|^{2 - N}} \, dy \cdot \int_{B_r} \frac{|\nabla u_j(y)|^2}{|y - x|^{2 - N}} \, dy.
	\end{align}
	We point out that although they considered only the symmetric case, their proof can be slightly modified to yield the same result for the asymmetric case. This powerful Clean-Up Lemma was later adapted by Dancer et al. \cite{Dancer-WZ2012Trans} to solutions $\{\bm{u}_n\}$ of system \eqref{Sys:main-interior} with $\bm{f}_n \equiv 0$ in $\Omega$ and $p = p_i = 1$ for all $i$. They showed that near a regular point $x_0 \in \mathcal{R}_{\bm{u}}$, $u_{j,n}$ exhibits an exponential decay for all $j \neq i_1, i_2$ as $n \to \infty$, where $i_1$ and $i_2$ are the only two indices such that $u_{i_1}, u_{i_2} \not\equiv 0$ in a neighborhood of $x_0$. They also obtained the same result for $N = 1$. Using a similar argument, we prove analogous results for general $p_i \geq 1$ with $p=\max_{1\leq i\leq k}\{p_i\}>1$, after establishing a strengthened version of Lemma \ref{lem:decay-Mu-p} (see Lemma \ref{lem:decay-Mu-p-general}). Notably, in our setting, if $j\neq i_1,i_2$ and $p_j>1$, then the decay behavior of $u_{j,n}$ is polynomial rather than exponential.
	\par 
	\begin{theorem}\label{thm:ab-near-regular}
		Let $N \geq 1$ and $\Omega$ be a domain in $\mathbb{R}^N$. Let $k \geq 3$, $p_i\geq 1$, and $a_{ij} > 0$ for all $1 \leq i, j \leq k$ with $i \neq j$. Let $\{\bm{u}_n\} \subseteq H^1_{\text{loc}}(\Omega; \mathbb{R}^k) \cap C_{\text{loc}}(\Omega; \mathbb{R}^k)$ be a sequence of weak solutions to system \eqref{Sys:main-interior}-\eqref{Condi:Bound-u}, where $\beta_n \to +\infty$ as $n \to \infty$ and $\bm{f}_n \equiv 0$ in $\Omega$ for all $n \geq 1$. Assume that $p := \max_{1 \leq i \leq k} \{p_i\} > 1$, and assume further that $\bm{u}_n \to \bm{u} \not\equiv 0$ in $C_{\text{loc}}(\Omega; \mathbb{R}^k)$ as $n \to \infty$. Let $x_0 \in \mathcal{R}_{\bm{u}}$, and let $i_1, i_2$ be the only two indices for which $u_{i_1}, u_{i_2} \not\equiv 0$ in a neighborhood of $x_0$. Then there exists a constant $r > 0$ independent of $n$ such that for $n$ large, we have:
		\begin{itemize}
			\item  for any $j\neq i_1,i_2$, if $p_j>1$, then $u_{j,n}$ admits a polynomial decay in $B_r(x_0)$ in the sense that there exists a constant $C>0$ independent of $n$ such that 
			$$u_{j,n} \leq C \beta_n^{-\left( \frac{1}{p_j} + \frac{1}{2p_j(p_j - 1)} \right)} \quad \text{in } B_r(x_0),$$
			whereas if $p_j=1$, then $u_{j,n}$ admits an exponential decay in $B_r(x_0)$ in the sense that there exists  a constant $C>0$ independent of $n$ such that 
			$$u_{j,n}\leq Ce^{-\beta_n^{\frac{1}{16p(p+2)}}}\quad \text{in }B_r(x_0);$$
			\item in $B_r(x_0)$, the system reduces to 
			\begin{align*}
				\left\{
				\begin{aligned}
					-\Delta u_{i_1,n}&=-\beta_na_{i_1i_2}u_{i_1,n}^{p_{i_1}}u_{i_2,n}^{p_{i_2}}+u_{i_1,n}^{p_{i_1}}\sum_{p_j>1,\atop j \neq i_1,i_2}O(\beta_n^{-\frac{1}{2(p_j-1)}})+u_{i_1,n}^{p_{i_1}}\sum_{p_j=1,\atop j \neq i_1,i_2}O(\beta_ne^{-\beta_n^{\frac{1}{16p(p+2)}}}),\\
					-\Delta u_{i_2,n}&=-\beta_na_{i_2i_1}u_{i_1,n}^{p_{i_1}}u_{i_2,n}^{p_{i_2}}+u_{i_2,n}^{p_{i_2}}\sum_{p_j>1,\atop j \neq i_1,i_2}O(\beta_n^{-\frac{1}{2(p_j-1)}})+u_{i_2,n}^{p_{i_2}}\sum_{p_j=1,\atop j \neq i_1,i_2}O(\beta_ne^{-\beta_n^{\frac{1}{16p(p+2)}}}).
				\end{aligned}\right.
			\end{align*}
		\end{itemize}
		As a consequence, up to a subsequence, for any $j \neq i_1, i_2$ and any $\alpha \in (0,1)$, we have
		\[
		a_{i_2i_1}u_{i_1,n} - a_{i_1i_2}u_{i_2,n} \to a_{i_2i_1}u_{i_1} - a_{i_1i_2}u_{i_2} \quad \text{and} \quad u_{j,n} \to 0
		\]
		in $C^{1,\alpha}_{\text{loc}}(B_r(x_0))$ as $n \to \infty$.
	\end{theorem}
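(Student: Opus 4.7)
My plan is to extend the Clean-Up strategy of \cite{Caffarelli-K-L2009fixpoint, Dancer-WZ2012Trans} to the inhomogeneous setting with general exponents $p_i \geq 1$. The starting point is the Clean-Up Lemma, which guarantees $m_{\bm{u}}(x_0) = 2$: there exist indices $i_1, i_2$ and a radius $r_0 > 0$ such that $u_j \equiv 0$ on $B_{2r_0}(x_0)$ for every $j \neq i_1, i_2$, while $u_{i_1}$ and $u_{i_2}$ are not identically zero there. Fix a smaller radius $r \in (0, r_0/4)$. Since $\bm{u}_n \to \bm{u}$ uniformly on $\overline{B_{r_0}(x_0)}$, we have $\|u_{j,n}\|_{L^\infty(B_{r_0}(x_0))} \to 0$ for each $j \neq i_1, i_2$, and the strong maximum principle combined with the $C^0$-convergence yields a subdomain of $B_{r_0}(x_0)$ on which $u_{i_1,n}^{p_{i_1}} + u_{i_2,n}^{p_{i_2}}$ is bounded below by some $c_0 > 0$ for all large $n$.

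The crux is the quantitative decay, which I would extract from the strengthened Lemma \ref{lem:decay-Mu-p-general} applied to each minority component $u_{j,n}$. The subsolution inequality
\[
-\Delta u_{j,n} + c_0 \beta_n \Big(\min_{\ell \neq j} a_{j\ell}\Big) u_{j,n}^{p_j} \leq 0
\]
on the chosen subdomain is the input to the iteration/barrier argument. The nonlinearity produces two regimes: when $p_j > 1$, the ODE $-\Delta w = -c\beta_n w^{p_j}$ supplies a power-type supersolution of size $\beta_n^{-(1/p_j + 1/(2p_j(p_j-1)))}$, while when $p_j = 1$ the linear absorption yields an exponential barrier of size $e^{-\beta_n^{1/(16p(p+2))}}$, the precise exponent being dictated by the geometric iteration encoded in Lemma \ref{lem:decay-Mu-p-general}. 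This is the step I expect to require the most care.

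Once the decay bounds are in hand, the reduced system follows by direct substitution: for $p_j > 1$ one computes $\beta_n u_{j,n}^{p_j} \leq C\beta_n^{\,1 - p_j(1/p_j + 1/(2p_j(p_j-1)))} = C\beta_n^{-1/(2(p_j-1))}$, and for $p_j = 1$ one has $\beta_n u_{j,n} \leq C\beta_n e^{-\beta_n^{1/(16p(p+2))}}$, matching the two $O(\cdot)$ sums in the statement. For the $C^{1,\alpha}_{\mathrm{loc}}$ convergence, I would introduce the linear combination $v_n := a_{i_2 i_1} u_{i_1,n} - a_{i_1 i_2} u_{i_2,n}$; the leading competition terms $\beta_n a_{i_1 i_2} a_{i_2 i_1} u_{i_1,n}^{p_{i_1}} u_{i_2,n}^{p_{i_2}}$ cancel exactly, so $-\Delta v_n$ consists only of the small error terms and tends to $0$ locally uniformly in $B_r(x_0)$. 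Combined with the uniform Lipschitz bound of Theorem \ref{thm:interior-lip} and interior Calderón-Zygmund/Schauder estimates, this yields a uniform $W^{2,q}$ bound for $v_n$, hence $C^{1,\alpha}_{\mathrm{loc}}$ compactness and the claimed convergence $v_n \to a_{i_2 i_1} u_{i_1} - a_{i_1 i_2} u_{i_2}$. The convergence $u_{j,n} \to 0$ in $C^{1,\alpha}_{\mathrm{loc}}(B_r(x_0))$ for $j \neq i_1, i_2$ follows from the decay bounds and the same interior estimates applied to the equation for $u_{j,n}$, whose right-hand side is $O(1)$ by the already-established decay of $\beta_n u_{j,n}^{p_j}$.
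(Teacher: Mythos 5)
There is a genuine gap in the central step. You propose to deduce the pointwise decay of $u_{j,n}$ ($j\neq i_1,i_2$) by applying Lemma~\ref{lem:decay-Mu-p-general} (or its exponential counterpart, Remark~\ref{rmk:bdd-Mu}) on a fixed subdomain of $B_{r_0}(x_0)$ on which $u_{i_1,n}^{p_{i_1}}+u_{i_2,n}^{p_{i_2}}\geq c_0>0$. But the theorem requires the decay in a full ball $B_r(x_0)$ \emph{centered at a point of the interface} $\Gamma_{\bm u}$, where $u_{i_1}=u_{i_2}=0$. On any neighborhood of $x_0$ the quantity $u_{i_1,n}^{p_{i_1}}+u_{i_2,n}^{p_{i_2}}$ degenerates near $\{u_{i_1}=u_{i_2}=0\}$, so no lower bound $c_0>0$ is available there and the subsolution inequality $-\Delta u_{j,n}\leq -c_0\beta_n u_{j,n}^{p_j}$ simply does not hold across the interface. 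Moreover, a direct application of Lemma~\ref{lem:decay-Mu-p-general} with $M=c_0\beta_n$ and a \emph{fixed} radius would give the rate $\beta_n^{-1/(p_j-1)}$, which is strictly faster than the rate $\beta_n^{-(1/p_j+1/(2p_j(p_j-1)))}$ claimed in the theorem; the weaker rate signals that the estimate is obtained on shrinking balls $B_{h_l}$ with a degenerating absorption coefficient $c_0\beta_n h_l^{p}$, not on a fixed domain. The attribution of the precise exponents to a ``geometric iteration encoded in Lemma~\ref{lem:decay-Mu-p-general}'' is therefore not accurate: that lemma contains no iteration.

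What is missing is the Caffarelli--Karakhanyan--Lin / Dancer--Wang--Zhang flatness iteration, which is exactly what the paper's Lemma~\ref{lem:iteration} supplies. One decomposes $u_{i_1,n}-\tfrac{a_{i_1i_2}}{a_{i_2i_1}}u_{i_2,n}=v_l+w_l$ with $v_l$ harmonic, tracks a flatness scale $h_l\to0$, and at each step partitions the ball into $\{|v_l|>8h_l\}$ (where one of $u_{i_1,n},u_{i_2,n}$ is bounded below by $\sim h_l$ and Lemma~\ref{lem:decay-Mu-p-general} or Remark~\ref{rmk:bdd-Mu} applies on balls of radius $\sim h_l$) and the $(\lambda h_l)$-narrow region $\{|v_l|\leq 8h_l\}$ near the interface (handled by the narrow-domain barrier from \cite{Caffarelli-K-L2009fixpoint}). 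The iteration is then stopped at $l_n$ with $h_{l_n-1}\sim\beta_n^{-1/(2p_j(p+2))}$ (resp.\ $\beta_n^{-1/(2p(p+2))}$), and substituting into the bound $u_{j,n}\lesssim\beta_n^{-1/(p_j-1)}h_{l_n-1}^{-(p+2)/(p_j-1)}$ produces exactly the exponent $1/p_j+1/(2p_j(p_j-1))$ (resp.\ $\exp(-\beta_n^{1/(16p(p+2))})$). Without this mechanism your argument cannot cross the interface, and the stated exponents would not be recoverable. Once the decay is in hand, your substitution giving the reduced system, the cancellation in $a_{i_2i_1}u_{i_1,n}-a_{i_1i_2}u_{i_2,n}$, and the $C^{1,\alpha}_{\mathrm{loc}}$ convergence via elliptic estimates all agree with the paper and are fine.
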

	\par 
	As mentioned above, Theorem \ref{thm:ab-near-regular} holds for the case where $p_i = 1$ for all $i$; moreover, the conditions imposed on $\bm{f}_n$ in this theorem appear to be weakenable, which we do not pursue here (see \cite[Section 5]{Dancer-WZ2012Trans}).
	\par 
	Thirdly, we analyze the asymptotic behavior of the solutions $\bm{u}_n$ near the interface $\Gamma_n$ of $\bm{u}_n$, where
	\begin{align}\label{def:gamma-n}
		\Gamma_n := \left\{ x \in \Omega :
		\begin{aligned}
			&u_{i,n}(x) = u_{j,n}(x) \text{ for some } i \neq j,\\
			&[u_{i,n}(x)+u_{j,n}(x)]^{p_i+p_j+1}\geq [u_{l,n}(x)+u_{s,n}(x)]^{p_l+p_s+1} \text{ for all } l\neq s.
		\end{aligned} \right\}.
	\end{align}
	\begin{theorem}\label{thm:ap-near-gamma-n}
		Let $N \geq 1$ and $\Omega$ be a domain in $\mathbb{R}^N$. Let $k \geq 3$, $p_i\geq 1$, and $a_{ij} > 0$ for all $1 \leq i, j \leq k$ with $i \neq j$. Let $\{\bm{u}_n\} \subseteq H^1_{\text{loc}}(\Omega; \mathbb{R}^k) \cap C_{\text{loc}}(\Omega; \mathbb{R}^k)$ be a sequence of weak solutions to system \eqref{Sys:main-interior}-\eqref{Condi:Bound-u}, where $\beta_n \to +\infty$ as $n \to \infty$ and $\bm{f}_n \equiv 0$ in $\Omega$ for all $n \geq 1$. Assume that $\bm{u}_n \to \bm{u} \not\equiv 0$ in $C_{\text{loc}}(\Omega; \mathbb{R}^k)$ as $n \to \infty$. Let $x_n\in \Gamma_n$ and $x_n\to x_0\in \Omega$ as $n\to\infty$. Then $x_0\in \Gamma_{\bm{u}}$ and the following assertions hold:\\
		(1) If $x_0\in \mathcal{R}_{\bm{u}}$, then 
		$$\limsup_{n \to \infty}\beta_n\sum_{i\neq j}u_{i,n}(x_n)^{p_i+1}u_{j,n}(x_n)^{p_j}\in (0,+\infty).$$ 
		(2) If $x_0\in \Gamma_{\bm{u}}\setminus\mathcal{R}_{\bm{u}}$, then 
		$$\limsup_{n \to \infty}\beta_n\sum_{i\neq j}u_{i,n}(x_n)^{p_i+1}u_{j,n}(x_n)^{p_j}=0.$$
	\end{theorem}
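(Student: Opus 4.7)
The plan is to first establish $x_0 \in \Gamma_{\bm{u}}$, then derive the shared upper bound from Proposition \ref{prop:uniform-upper-bdd}, and finally prove the two lower bounds separately by two distinct blow-up arguments. To see $x_0 \in \Gamma_{\bm{u}}$: passing to a subsequence so that the pair $(i,j)$ realizing the maximum in the definition of $\Gamma_n$ is fixed, the uniform convergence $\bm{u}_n \to \bm{u}$ together with the segregation $u_i u_j \equiv 0$ force $u_i(x_0) = u_j(x_0) = 0$; applying the max condition to every pair $(l,s)$ and passing to the limit then gives $u_l(x_0) = 0$ for every $l$. Proposition \ref{prop:uniform-upper-bdd} immediately yields $\limsup_n \beta_n \sum_{i\neq j} u_{i,n}^{p_i+1}(x_n) u_{j,n}^{p_j}(x_n) < +\infty$ in both cases.

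For the lower bound in (1), the first step is to invoke Theorem \ref{thm:ab-near-regular} and reduce, in some $B_r(x_0)$, to essentially a two-species problem: $u_{l,n}$ with $l \neq i_1, i_2$ decay rapidly, and $\hat u_n := a_{i_2 i_1} u_{i_1, n} - a_{i_1 i_2} u_{i_2, n} \to \hat u$ in $C^{1,\alpha}_{\text{loc}}(B_r(x_0))$, with $\hat u$ harmonic and $\nabla \hat u(x_0) \neq 0$ at the regular point. The max pair at $x_n$ must be $(i_1, i_2)$ for $n$ large (otherwise the max condition would force $u_{i_1,n}(x_n) + u_{i_2,n}(x_n)$ to decay as fast as a dormant component, which is incompatible with the polynomial two-species transition scale near $\Gamma_{\bm{u}}$), so $u_{i_1, n}(x_n) = u_{i_2, n}(x_n) =: c_n \to 0$ and the sum reduces to $2\beta_n c_n^{p_{i_1}+p_{i_2}+1} + o(1)$. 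Assume for contradiction $\beta_n c_n^{p_{i_1}+p_{i_2}+1} \to 0$. Define the blow-up $\tilde u_{l,n}(y) := u_{l,n}(x_n + c_n y)/c_n$: by Theorem \ref{thm:interior-lip} it is uniformly Lipschitz with $\tilde u_{i_1, n}(0) = \tilde u_{i_2, n}(0) = 1$, and every competitive coefficient in the rescaled system vanishes (using $\beta_n c_n^{p_{i_1}+p_{i_2}+1} \to 0$, the polynomial lower bound $c_n \gtrsim \beta_n^{-1/(p_{i_1}+p_{i_2}+1)}$ coming from Proposition \ref{prop:uniform-upper-bdd}, and the rapid decay from Theorem \ref{thm:ab-near-regular}). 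Elliptic regularity then upgrades the convergence to $C^{1,\alpha}_{\text{loc}}(\mathbb{R}^N)$, and the limits being nonnegative harmonic functions on $\mathbb{R}^N$ are identically $1$ by Liouville.

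To close (1): from the identity $\hat u_n(x_n + c_n y) = \hat u_n(x_n) + c_n y \cdot \int_0^1 \nabla \hat u_n(x_n + t c_n y)\,dt$ and the $C^{1,\alpha}$ convergence of $\hat u_n$, one obtains $\tilde{\hat u}_n(y) := \hat u_n(x_n + c_n y)/c_n \to (a_{i_2 i_1} - a_{i_1 i_2}) + \nabla \hat u(x_0) \cdot y$, an affine function with nonzero gradient. But from $\tilde u_{i_1,n}, \tilde u_{i_2,n} \to 1$, $\tilde{\hat u}_n = a_{i_2 i_1} \tilde u_{i_1, n} - a_{i_1 i_2} \tilde u_{i_2, n} \to a_{i_2 i_1} - a_{i_1 i_2}$, a constant, incompatible with the previous limit. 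This completes (1).

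For (2), suppose for contradiction some subsequence gives $\beta_n u_{i,n}^{p_i+1}(x_n) u_{j,n}^{p_j}(x_n) \geq \gamma > 0$ for a fixed pair $(i,j)$. Choose $c_n := \max(u_{i,n}(x_n), u_{j,n}(x_n))$; the blow-up $V_{l,n}(y) := u_{l,n}(x_n + c_n y)/c_n$ is uniformly Lipschitz (Theorem \ref{thm:interior-lip}), and the rescaled coefficient $\beta_n c_n^{p_i+p_j+1}$ is bounded above (Proposition \ref{prop:uniform-upper-bdd}) and away from zero. Elliptic compactness produces a nontrivial entire limit $\bm V$ on $\mathbb{R}^N$ with $V_i(0), V_j(0) > 0$; since $V_i$ cannot be a positive constant (that would force $V_j \equiv 0$ via the competition, contradicting $V_j(0) > 0$), one has $\int_{B_R} |\nabla V_i|^2, \int_{B_R} |\nabla V_j|^2 > 0$ for $R$ large, so $J_{ij}(\bm V, 0, R) > 0$ for $R$ large. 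Combining the scaling identity $J_{ij}(\bm u_n, x_n, R c_n) = J_{ij}(\bm V_n, 0, R)$ with the ACF-type monotonicity of Theorem \ref{thm:ACF-formula} applied to $\bm u_n$ at $x_n$ gives $\liminf_n J_{ij}(\bm u_n, x_n, r) \geq J_{ij}(\bm V, 0, R)$ for every fixed $r > 0$; passing $n \to \infty$ at fixed $r$ and then $r \to 0$ contradicts $x_0 \notin \mathcal R_{\bm u}$, which forces $\lim_{r \to 0} J_{ij}(\bm u, x_0, r) = 0$. The main obstacle is the uniform application of Theorem \ref{thm:ACF-formula} to the competing sequence $\bm u_n$ with asymmetric and nonhomogeneous coefficients, so that the double limit (first $n \to \infty$, then $r \to 0$) through the scale-invariant identity for $J_{ij}$ can be carried out rigorously.
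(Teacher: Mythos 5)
Your high-level plan for both parts is aligned with the paper's (Proposition~\ref{prop:uniform-upper-bdd} for the shared upper bound; Theorem~\ref{thm:ab-near-regular} plus a Liouville argument for the lower bound in (1); Theorem~\ref{thm:interior-lip} plus Theorem~\ref{thm:ACF-formula} for the lower bound in (2)), but there are genuine gaps in the execution of each part.

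For part (1), the logical order is reversed. You assert up front that ``the max pair at $x_n$ must be $(i_1,i_2)$'' and justify this by appealing to a ``polynomial two-species transition scale near $\Gamma_{\bm u}$'' — but such a lower bound on the transition scale is precisely what is being proved, so this step is circular. The paper avoids this: it sets $\varepsilon_n := u_{1,n}(x_n)+u_{2,n}(x_n)$ \emph{without} identifying the equal pair, shows $\limsup_n\beta_n\varepsilon_n^{p_1+p_2+1}>0$ by the Liouville contradiction, and only then invokes the $\Gamma_n$ structure to conclude the equal pair must be $(1,2)$ (the dormant decay from Theorem~\ref{thm:ab-near-regular} then yields the contradiction you are gesturing at). Relatedly, the ``polynomial lower bound $c_n\gtrsim\beta_n^{-1/(p_{i_1}+p_{i_2}+1)}$ coming from Proposition~\ref{prop:uniform-upper-bdd}'' is backwards: that proposition bounds $\beta_n u_{1,n}^{p_1+1}u_{2,n}^{p_2}$ from \emph{above}, giving an upper bound on $c_n$, not a lower bound. (In fact the upper estimates on the rescaled competition coefficients for $j\neq i_1,i_2$ follow directly from Theorem~\ref{thm:ab-near-regular} and $c_n\to 0$ and need no lower bound on $c_n$, so this slip is easily repaired, but as cited it is incorrect.)

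For part (2), two issues are more serious. First, the choice $c_n:=\max(u_{i,n}(x_n),u_{j,n}(x_n))$ is not enough: without using that $x_n\in\Gamma_n$ gives an \emph{equal} pair, nothing prevents $u_{j,n}(x_n)/u_{i,n}(x_n)\to 0$, and then neither $V_j(0)>0$ nor the upper bound $\beta_n c_n^{p_i+p_j+1}\le C$ (which is not what Proposition~\ref{prop:uniform-upper-bdd} says — that controls the mixed product $u_{i,n}^{p_i+1}u_{j,n}^{p_j}$, not $u_{i,n}^{p_i+p_j+1}$) would hold. The paper extracts from the definition of $\Gamma_n$ that the equal pair can be taken to be the pair achieving the positive $\limsup$, so that $u_{1,n}(x_n)=u_{2,n}(x_n)$ and $\varepsilon_n=2u_{1,n}(x_n)$; then both the two-sided bound on $\beta_n\varepsilon_n^{p_1+p_2+1}$ and the normalization $v_{1,n}(0)=v_{2,n}(0)=1/2$ needed for the ACF hypothesis (h1) follow. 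Second, the passage from Theorem~\ref{thm:ACF-formula} to the gradient-only quantity $J_{ij}$ of \eqref{def:J-ij} is not a scaling identity: the ACF formula controls the product $J_1 J_2/r^4$ where $J_i$ carries the competition integrals, and the paper removes those terms at the end by integrating by parts and using $\bm u_n\to\bm u\in\mathscr S(\Omega)$ in $H^1_{\mathrm{loc}}$. Your final paragraph explicitly flags the uniform applicability of Theorem~\ref{thm:ACF-formula} and the double limit as ``the main obstacle'' without resolving it — this is exactly the verification of (h0)–(h3) for the blown-up sequence (and the choice of the radius interval $[r_1,\,r_0/(4\varepsilon_n)]$) that the paper carries out, and it cannot be skipped.
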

	\par 
	In view of Theorem \ref{thm:ap-near-gamma-n}, we conclude that the pointwise estimate in Proposition \ref{prop:uniform-upper-bdd} is sharp.
	\par 
	Theorem \ref{thm:ap-near-gamma-n} (1) follows from Theorem \ref{thm:ab-near-regular} and Proposition \ref{prop:uniform-upper-bdd}, while part (2) of the theorem is established via Theorem \ref{thm:interior-lip} and the Alt-Caffarelli-Friedman type monotonicity formula (Theorem \ref{thm:ACF-formula}). Analogous results for the Gross-Pitaevskii system have been proved in \cite[Theorem 1.5]{Soave-Z2017Poincare} and \cite[Corollary 1.3]{Wang-kelei2017CVPDE}. Note that if $\bm{\hat{g}}$ is homogeneous, then
	$$\Gamma_n = \left\{ x \in \Omega : u_{i,n}(x) = u_{j,n}(x) \text{ for some } i \neq j, \text{ and } u_{i,n}(x) \geq u_{l,n}(x) \text{ for all } 1 \leq l \leq k \right\}.$$
	\par 
	Finally, we prove that if $p_i = 1$ for all $i$ and $x_0 \in \Gamma_{\bm{u}}$ with $m(x_0) \neq 0$, then there exists a sequence $\{x_n\}$ such that $x_n \in \Gamma_n$ for all $n \geq 1$ and $x_n \to x_0$ in $\Omega$ as $n \to \infty$. 
	\begin{proposition}\label{prop:existence-xn}
		Let $\{\bm{u}_n\}$ and $\bm{u}$ be as in Theorem \ref{thm:ap-near-gamma-n}, with $p_i = 1$ for all $1 \leq i \leq k$. Let $x_0 \in \Gamma_{\bm{u}}$ satisfy $m(x_0) \neq 0$. Then there exists a sequence $\{x_n\}$ with $x_n \in \Gamma_n$ for all $n \geq 1$ such that $x_n \to x_0$ in $\Omega$ as $n \to \infty$.
	\end{proposition}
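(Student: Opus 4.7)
First, I would observe that the hypothesis $m_{\bm{u}}(x_0) \neq 0$ in fact forces $m_{\bm{u}}(x_0) \geq 2$. Suppose by contradiction that only one component, say $u_{i_1}$, is nontrivial in every neighborhood of $x_0$; then on a small ball $B_r(x_0)$ we have $u_j \equiv 0$ for every $j \neq i_1$, so by \eqref{def:hat-operator} the function $\widehat{u}_{i_1}$ coincides with $u_{i_1}$ there. Since $\bm{u} \in \mathscr{S}(\Omega)$ gives $-\Delta \widehat{u}_{i_1} \geq 0$, the component $u_{i_1}$ is a nonnegative superharmonic function in $B_r(x_0)$ vanishing at the interior point $x_0$, and the strong minimum principle forces $u_{i_1} \equiv 0$ in $B_r(x_0)$, contradicting $m_{\bm{u}}(x_0) = 1$.

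Fix therefore two indices $i_1 \neq i_2$ such that $u_{i_1}, u_{i_2}$ are both nontrivial in every neighborhood of $x_0$, and for each integer $m \geq 1$ choose points $y_1^m, y_2^m \in B_{1/m}(x_0)$ with $u_{i_1}(y_1^m) > 0$ and $u_{i_2}(y_2^m) > 0$; the segregation condition $u_l \cdot u_j \equiv 0$ for $l \neq j$ then implies $u_j(y_1^m) = 0$ for every $j \neq i_1$, and similarly at $y_2^m$. Introduce the open, pairwise disjoint ``strict-maximum'' sets $E_{i,n} := \{x \in \Omega : u_{i,n}(x) > u_{l,n}(x) \text{ for all } l \neq i\}$. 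The $C_{\text{loc}}$-convergence $\bm{u}_n \to \bm{u}$ yields an index $N(m)$ such that $y_1^m \in E_{i_1,n}$ and $y_2^m \in E_{i_2,n}$ for every $n \geq N(m)$. Since $B_{1/m}(x_0)$ is convex, the segment $\gamma(t) := (1-t)y_1^m + t y_2^m$ stays inside it and joins the two disjoint open sets; letting $t_* := \inf\{t \in [0,1] : \gamma(t) \notin E_{i_1,n}\} \in (0,1]$ and $z_n^m := \gamma(t_*)$, continuity on the interval $[0,t_*)$ gives $u_{i_1,n}(z_n^m) \geq u_{l,n}(z_n^m)$ for every $l$, while $z_n^m \notin E_{i_1,n}$ provides some $l \neq i_1$ with $u_{l,n}(z_n^m) \geq u_{i_1,n}(z_n^m)$. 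Combined, these produce $u_{l,n}(z_n^m) = u_{i_1,n}(z_n^m) \geq u_{j,n}(z_n^m)$ for every $j$; in view of the simplified description of $\Gamma_n$ recorded just after Theorem \ref{thm:ap-near-gamma-n} (applicable because $p_i \equiv 1$ makes $\bm{\hat{g}}$ homogeneous), this says exactly that $z_n^m \in \Gamma_n \cap B_{1/m}(x_0)$.

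A standard diagonal extraction---choose $N(m)$ strictly increasing in $m$ and set $x_n := z_n^m$ whenever $N(m) \leq n < N(m+1)$, with $x_n$ picked arbitrarily in $\Gamma_n$ for $n < N(1)$---then produces a sequence with $x_n \in \Gamma_n$ and $|x_n - x_0| \to 0$, as required. The only nontrivial ingredient is the first step, which I expect to be the main conceptual obstacle: it is precisely the superharmonicity of $\widehat{u}_{i_1}$, supplied by the $\widehat{u}$-side of the class $\mathscr{S}(\Omega)$, that rules out a ``solitary zero'' of one component and thereby guarantees a second component available to furnish the interface-matching. Everything after that is elementary continuity, convexity, and disjointness of the sets $E_{i,n}$, so I do not anticipate further difficulties.
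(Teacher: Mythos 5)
Your proposal is correct but takes a genuinely different route from the paper's proof. The paper argues by contradiction: assuming $\text{dist}(x_0,\Gamma_n)\geq 2\delta>0$ along a subsequence, the connectedness of $B_\delta(x_0)$ forces a single component $u_{1,n}$ to strictly dominate there (after relabelling); Lemma~\ref{lem:decay-Mu-p-general}, applied with effective exponent $2$ since $u_{1,n}u_{i,n}\geq u_{i,n}^2$, then gives $\sum_{i\geq 2}\beta_n u_{i,n}\leq C\delta^{-2}$ on $B_{\delta/2}(x_0)$, so in the limit $u_1$ is a nonnegative supersolution of $-\Delta u_1+C\delta^{-2}u_1\geq 0$ vanishing at $x_0$; the strong maximum principle then yields $u_1\equiv 0$, hence $\bm{u}\equiv 0$ near $x_0$ and $m_{\bm{u}}(x_0)=0$, contradicting the hypothesis. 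You instead proceed directly: you first rederive (in essence) Lemma~\ref{lem:multiplicity-one=empty} from the appendix, establishing $m_{\bm{u}}(x_0)\geq 2$ via the superharmonicity of $\widehat{u}_{i_1}$, and then manufacture points of $\Gamma_n$ near $x_0$ by an intermediate-value argument on the disjoint open strict-maximum sets $E_{i,n}$ along a segment joining witnesses of the two nontrivial components. Both arguments rest on the simplified characterization of $\Gamma_n$ valid for $p_i\equiv 1$, recorded in the paper just after Theorem~\ref{thm:ap-near-gamma-n}. Your route is purely qualitative, replacing the decay estimate with the $\widehat{\bm{u}}$-structure of $\mathscr{S}(\Omega)$ and a connectedness argument, while the paper's contradiction argument is quantitative and obtains the exclusion $m_{\bm{u}}(x_0)\neq 1$ as a byproduct rather than as a preliminary step. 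Both are sound and of comparable length; yours has the side benefit of making explicit the role of $\mathcal{L}_{1,\bm{u}}=\emptyset$, which the paper mentions only in the surrounding discussion.
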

	\par 
	This result is a straightforward application of Lemma \ref{lem:decay-Mu-p-general}. However, we remark that this conclusion is non-trivial in higher dimensions, as we do not know whether the singular subset $\Gamma_{\bm{u}} \setminus \mathcal{R}_{\bm{u}}$ of $\Gamma_{\bm{u}}$ has Hausdorff dimension at most $N-2$. Indeed, if this property holds, then noting that $\mathcal{L}_{1,\bm{u}} = \emptyset$ (see Lemma \ref{lem:multiplicity-one=empty}), we can follow the proof of \cite[Proposition 1.3]{Soave-Z2017Poincare} to establish the desired result. Specifically, if the matrix $(a_{ij})$ is symmetric, this property is known to hold, and hence Proposition \ref{prop:existence-xn} follows immediately.
	\par 
	The paper is organized as follows: In Section \ref{sec:2}, we introduce some notation and derive several properties of subsolutions to system \eqref{Sys:main-interior-general}. In Section \ref{sec:3}, we establish an Alt-Caffarelli-Friedman type monotonicity formula for system \eqref{Sys:main-interior-general} (see Theorem \ref{thm:ACF-formula}). Section \ref{sec:4} is devoted to proving Theorem \ref{thm:interior-lip}, using blow-up analysis and the Alt-Caffarelli-Friedman type monotonicity formula. In particular, we employ an induction argument to overcome the challenge arising from the nonhomogeneity of the competition term $\bm{\hat{g}}$. Theorem \ref{thm:global-lip} is proved in Section \ref{sec:5} via blow-up analysis and Theorem \ref{thm:interior-lip}. In Section \ref{sec:6}, we prove Proposition \ref{prop:uniform-upper-bdd}, Theorems \ref{thm:ab-near-regular}, \ref{thm:ap-near-gamma-n}, and Proposition \ref{prop:existence-xn}. Finally, in the appendix, we analyze the structure of free boundaries for elements of $\mathscr{S}(\Omega)$ in one dimension.
	\par 
	Throughout the paper, we use $C, C_0, C_1, C_2$ and $c_0, c_1, c_2$ to denote positive constants, which may differ from line to line.

	\section{Preliminaries}\label{sec:2}
	\setcounter{section}{2}
	\setcounter{equation}{0}
	In this section, we introduce some notation and deduce some properties of subsolutions to system \eqref{Sys:main-interior-general}, which will be used mainly in the proof of Theorem \ref{thm:interior-lip}. We also recall a useful lemma which is obtained in \cite{Soave-Z2015ARMA}. 
	\par 
	Let $N\geq 3$, $\Omega'$ be a domain in $\mathbb{R}^N$ and $K\Subset\Omega'$ be a non-empty compact set. For a function $u\in C_{\text{loc}}(\Omega')$, we denote 
	\begin{align}\label{def:widetilde-H}
		\widetilde{H}(u,x_0,r):=\frac{1}{r^{N-1}}\int_{\partial B_r(x_0)}u,\quad \text{for}~~x_0\in K,~~0<r<\text{dist}(K,\partial\Omega').
	\end{align}
	\par 
	First of all, we derive a monotonic property of  $	\widetilde{H}(u,x_0,\cdot)$.
	\begin{lemma}\label{lem:non-decrease-widetilde-H}
		Let $d'\geq 0$. Suppose that $u\in C^1_{\text{loc}}(\Omega')$ satisfies  $-\Delta u\leq d'$ in $\Omega'$ in the distribution sense. Then the function
		$$r\mapsto \widetilde{H}(u,x_0,r)+|B_1|d'\frac{r^2}{2}$$
		is monotone non-decreasing for $r\in (0,\text{dist}(K,\partial\Omega'))$ and $x_0\in K$. Here $|B_1|$ denotes the Lebesgue measure of $B_1$ in $\mathbb{R}^N$.
	\end{lemma}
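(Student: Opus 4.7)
The plan is a direct spherical-mean calculation combined with the divergence theorem. Using the change of variables $y = x_0 + r\omega$ with $\omega \in \partial B_1$, I would first rewrite
$$\widetilde{H}(u,x_0,r) = \int_{\partial B_1} u(x_0 + r\omega)\,d\sigma(\omega).$$
Since $u \in C^1_{\text{loc}}(\Omega')$, I can differentiate under the integral sign to get
$$\frac{d}{dr}\widetilde{H}(u,x_0,r) = \int_{\partial B_1} \nabla u(x_0 + r\omega)\cdot\omega\,d\sigma(\omega) = \frac{1}{r^{N-1}}\int_{\partial B_r(x_0)} \nabla u \cdot \nu\,d\sigma,$$
where $\nu$ denotes the outward unit normal. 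This identity holds for every admissible $r$ and is the classical formula for the derivative of a spherical mean.

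Next, I would like to rewrite the boundary integral as $r^{-(N-1)}\int_{B_r(x_0)}\Delta u$ and invoke the hypothesis $-\Delta u \leq d'$. Since $\Delta u$ is only defined distributionally, I would regularize $u$ by convolution with a standard non-negative mollifier $\rho_{\varepsilon}$: on the $\varepsilon$-interior, $u_{\varepsilon} := u * \rho_{\varepsilon}$ is smooth, the distributional inequality $-\Delta u \leq d'$ is preserved into $-\Delta u_{\varepsilon} \leq d'$ pointwise, and the classical divergence theorem yields
$$\frac{d}{dr}\widetilde{H}(u_{\varepsilon},x_0,r) = \frac{1}{r^{N-1}}\int_{B_r(x_0)} \Delta u_{\varepsilon}\,dy \;\geq\; -\frac{d'|B_r(x_0)|}{r^{N-1}} = -\,d'|B_1|\,r.$$
Integrating over any $[r_1, r_2]$ in the allowed range gives
$$\widetilde{H}(u_{\varepsilon},x_0,r_2) + |B_1|d'\tfrac{r_2^2}{2} \;\geq\; \widetilde{H}(u_{\varepsilon},x_0,r_1) + |B_1|d'\tfrac{r_1^2}{2},$$
and since $u_{\varepsilon} \to u$ locally uniformly as $\varepsilon \to 0^{+}$, the monotonicity passes to $u$ itself on $(0,\mathrm{dist}(K,\partial\Omega'))$, uniformly in $x_0 \in K$.

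There is no substantive obstacle here; the only technical point is that for $u \in C^1$ the identity $\int_{\partial B_r}\nabla u\cdot\nu\,d\sigma = \int_{B_r}\Delta u$ must be interpreted in a distributional sense. The mollification handles this cleanly, because the hypothesis $-\Delta u \leq d'$ in the distributional sense amounts to $\Delta u + d'$ being a non-negative Radon measure, a property preserved under convolution with a non-negative kernel. As an alternative, one could bypass regularization by testing the distributional inequality against a radial cutoff $\phi(|y-x_0|)$ whose derivative approximates a difference of indicator functions of two nearby spheres, and then letting the approximation collapse; this gives the integrated form of the monotonicity directly.
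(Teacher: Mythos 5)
Your proposal is correct and takes essentially the same route as the paper: both compute the derivative of the spherical mean, bound it below by $-|B_1|\,d'\,r$ by integrating the distributional inequality $-\Delta u \le d'$ over a ball, and then integrate in $r$. The paper phrases the ball integration via a rescaled function $\bar u(x)=u(x_0+rx)$ rather than via mollification, but the mollification step you add is just a clean way to justify the same distributional divergence-theorem identity.
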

	\begin{proof}
		Let
		$$0<r<r'<\text{dist}(K,\partial\Omega'),\quad x_0\in K,\quad  \bar{u}(x):=u(x_0+rx)~~\text{for}~~x\in B_{r'/r}.$$
		Then $\bar{u}\in C^1_{\text{loc}}(B_{r'/r})$ and $-\Delta \bar{u}\leq r^2d'$ in $B_{r'/r}$.
		By integrating in $B_1$, we deduce that $\int_{\partial B_1}\nabla \bar{u}(x)\cdot x\geq -|B_1|r^2d'$.
		This implies that 
		$$\frac{d}{dr}\widetilde{H}(u,x_0,r)=\frac{d}{dr}\left(\int_{\partial B_1}\bar{u}\right)=\int_{\partial B_1}(\nabla u)(x_0+rx)\cdot x\geq -|B_1|rd'.$$
		By integrating, we obtain the desired result.
	\end{proof}
	\par 
	Next, we consider a vector-valued function $\bm{u}\in C_{\text{loc}}^1(\Omega';\mathbb{R}^k)$ that solves
	\begin{align}\label{Sys:main-interior-1}
		\left\{
		\begin{array}{ll}
			-\Delta u_{i}\leq d'-u_{i}^{p_i}\sum\limits_{j\neq i}^ka_{ij}M_{ij}u_{j}^{p_{j}}\quad&\text{in}~~\Omega',\\
			u_{i}\geq 0\quad&\text{in}~~\Omega',
		\end{array}\right.
		\qquad 1\leq i\leq k
	\end{align}
	in the distribution sense, where $k\geq 2$, $d'>0$, $\beta>0$, $p_i\geq 1$, $a_{ij}>0$ and $M_{ij}>0$ for any $1\leq i,j\leq k$ with $i\neq j$. 
	\par
	For any $x_0\in K$, $1\leq i\leq k$, and $0<r<\text{dist}(K,\partial \Omega')$, we denote
	\begin{align}\label{def:H-J-Lambda}
		\begin{split}
			H_i(\bm{u},x_0,r)&:=\frac{1}{r^{N-1}}\int_{\partial B_r(x_0)}u_i^2,\\
			J_i(\bm{u},x_0,r)&:=\int_{B_r(x_0)}\left[|\nabla u_i|^2+u_i^{p_i+1}\sum_{j\neq i}^ka_{ij}M_{ij}u_j^{p_{j}}-d'u_i\right]|x-x_0|^{2-N},\\
			\Lambda_i(\bm{u},x_0,r)&:=\frac{r^2\int_{\partial B_r(x_0)}|\nabla_{\theta} u_i|^2+ u_i^{p_i+1}\sum_{j\neq i}^ka_{ij}M_{ij}u_j^{p_{j}}-d'u_i}{\int_{\partial B_r(x_0)}u_i^2}\quad \text{if}~~\int_{\partial B_r(x_0)}u_i^2\neq 0.
		\end{split}
	\end{align}
	Here $|\nabla_{\theta} u_i|^2:=|\nabla u_i|^2-|\partial_{\nu}u_i|^2$, where $\partial_{\nu}u(x)=\nabla u(x)\cdot \nu(x)$ and $\nu(x)$ is the unit outer normal vector at $x\in \partial B_r(x_0)$.  We also introduce the following function
	\begin{align}\label{def:gamma-function}
		\gamma(t):=\sqrt{\left(\frac{N-2}{2}\right)^2+t}-\frac{N-2}{2}\quad \text{for}~~ t\geq -\left(\frac{N-2}{2}\right)^2.
	\end{align}
	Then we have the following estimates for $J_i(1\leq i\leq k)$.  
	\begin{lemma}\label{lem:est:J-i(r)}
		Let $1\leq i\leq k$, $x_0\in K$ and $0<r<\text{dist}(K,\partial\Omega')$. Then the following assertions hold:\\
		(1) There holds
		$$J_i(\bm{u},x_0,r)\leq \frac{1}{r^{N-2}}\int_{\partial B_{r}(x_0)}u_i\partial_{\nu}u_i+\frac{N-2}{2r^{N-1}}\int_{\partial B_{r}(x_0)}u_i^2.$$
		(2) If there exists $L>0$ such that $\|\nabla u_i\|_{L^{\infty}(\Omega')}\leq L$, then
		$$\frac{J_i(\bm{u},x_0,r)}{r^2}\leq L\sigma_{N-1}^{1/2}\left[\frac{H_i(\bm{u},x_0,r)}{r^2}\right]^{1/2}+\frac{N-2}{2}\frac{H_i(\bm{u},x_0,r)}{r^2},$$
		where $\sigma_{N-1}$ denotes the area of the unit sphere in $\mathbb{R}^N$.\\
		(3) If $\Lambda_i(\bm{u},x_0,r)>0$, then
		$$J_i(\bm{u},x_0,r)\leq \frac{r}{2\gamma(\Lambda_i(\bm{u},x_0,r))}\int_{\partial B_r(x_0)}\left[|\nabla u_i|^2+u_i^{p_i+1}\sum_{j\neq i}a_{ij}M_{ij}u_j^{p_{j}}-d'u_i\right]|x-x_0|^{2-N}.$$
	\end{lemma}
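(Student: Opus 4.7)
All three estimates emerge from a single computation for (1) using the fundamental-solution weight $|x-x_0|^{2-N}$, which is harmonic on $\mathbb{R}^N\setminus\{x_0\}$ for $N\geq 3$. The plan is to derive (1) first via a careful integration by parts on an annulus, and then extract (2) and (3) from it by an elementary bound and by Young's inequality, respectively.

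For (1), I would apply Green's second identity on $B_r(x_0)\setminus\overline{B_\varepsilon(x_0)}$ to the pair $(u_i^2/2,\,|x-x_0|^{2-N})$---after regularization of $u_i$ if necessary, since the PDE is only in the distribution sense---and exploit $\Delta|x-x_0|^{2-N}\equiv 0$ on the annulus together with $\Delta(u_i^2/2)=|\nabla u_i|^2+u_i\Delta u_i$. Explicit computation of the outer normal derivatives on each sphere yields boundary contributions on $\partial B_r(x_0)$ and $\partial B_\varepsilon(x_0)$; letting $\varepsilon\to 0^+$ and invoking the $C^1$-regularity of $u_i$ at $x_0$, the inner-boundary terms collapse to $-\tfrac{N-2}{2}\sigma_{N-1}u_i(x_0)^2\leq 0$, which may be discarded to obtain the upper bound
\[
\int_{B_r(x_0)}|x-x_0|^{2-N}\bigl(|\nabla u_i|^2+u_i\Delta u_i\bigr)\,dx \leq \frac{1}{r^{N-2}}\int_{\partial B_r(x_0)}u_i\partial_\nu u_i+\frac{N-2}{2r^{N-1}}\int_{\partial B_r(x_0)}u_i^2.
\]
Multiplying the subsolution inequality in \eqref{Sys:main-interior-1} by $u_i\geq 0$ gives $u_i\Delta u_i\geq u_i^{p_i+1}\sum_{j\neq i}a_{ij}M_{ij}u_j^{p_j}-d'u_i$, so the integrand above dominates the integrand of $J_i$ pointwise; this completes (1).

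Part (2) follows at once from (1): I bound $\int_{\partial B_r(x_0)}u_i\partial_\nu u_i\leq L\int_{\partial B_r(x_0)}u_i$ using the gradient bound, then apply Cauchy-Schwarz together with $|\partial B_r(x_0)|=\sigma_{N-1}r^{N-1}$ to obtain $\int_{\partial B_r(x_0)}u_i\leq \sigma_{N-1}^{1/2}r^{N-1}H_i(\bm{u},x_0,r)^{1/2}$; substituting and dividing by $r^2$ produces the stated form. For (3), I would start from (1) and apply Young's inequality
\[
\int_{\partial B_r(x_0)}u_i\partial_\nu u_i \leq \frac{\lambda}{2}\int_{\partial B_r(x_0)}u_i^2+\frac{1}{2\lambda}\int_{\partial B_r(x_0)}(\partial_\nu u_i)^2
\]
with the crucial choice $\lambda=\gamma(\Lambda_i(\bm{u},x_0,r))/r$. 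The defining relation $\gamma(t)(\gamma(t)+N-2)=t$ (read off from \eqref{def:gamma-function}) combined with the hypothesis $\Lambda_i>0$ then converts $\tfrac{\gamma+N-2}{2}r^{1-N}\int_{\partial B_r(x_0)}u_i^2$ into $\tfrac{r^{3-N}}{2\gamma}\int_{\partial B_r(x_0)}\bigl[|\nabla_\theta u_i|^2+u_i^{p_i+1}\sum_{j\neq i}a_{ij}M_{ij}u_j^{p_j}-d'u_i\bigr]$; combining this with $\tfrac{r^{3-N}}{2\gamma}\int_{\partial B_r(x_0)}(\partial_\nu u_i)^2$ and using the orthogonal decomposition $|\nabla u_i|^2=(\partial_\nu u_i)^2+|\nabla_\theta u_i|^2$ produces the target inequality.

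The main technical care lies in the $\varepsilon\to 0^+$ limit in (1), where one must track the singularity of $|x-x_0|^{2-N}$ carefully and rely on the $C^1$-continuity of $u_i$ at $x_0$ to identify the inner-boundary limit; this can be made rigorous by testing the distributional inequality against a Lipschitz cutoff of $u_i\cdot|x-x_0|^{2-N}$ away from $x_0$. The algebra in (3) is then routine, since the particular choice $\lambda=\gamma/r$ is calibrated exactly so that the two leftover pieces merge via the identity $\gamma(\gamma+N-2)=\Lambda_i$.
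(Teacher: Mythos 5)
Your proof is correct. The paper defers the argument to Lemma~2.4 of the cited Zhang--Zhang preprint, so a line-by-line comparison is not available, but the route you take --- Green's second identity on the punctured ball with the fundamental-solution weight $|x-x_0|^{2-N}$, discarding the non-positive inner-boundary contribution $-\tfrac{N-2}{2}\sigma_{N-1}u_i(x_0)^2$, and combining with the subsolution inequality multiplied by $u_i\geq 0$ to pass from $\int G\,\Delta(u_i^2/2)$ to $J_i$ --- is precisely the standard Alt--Caffarelli--Friedman computation, and your derivations of (2) from Cauchy--Schwarz and of (3) from Young's inequality with $\lambda=\gamma(\Lambda_i)/r$, the identity $\gamma(\gamma+N-2)=\Lambda_i$, and the splitting $|\nabla u_i|^2=(\partial_\nu u_i)^2+|\nabla_\theta u_i|^2$ are accurate and complete.
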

	\begin{proof}
		We omit the proof as it is almost identical to that of Lemma 2.4 in \cite{Zhang-Zhang2025}.
	\end{proof}
	\par 
	Thirdly, we derive some asymptotic properties of solutions with uniform Lipschitz bounds to system \eqref{Sys:main-interior-1}. More precisely, consider a sequence of weak solutions $\{\bm{u}_n\} \subseteq C^{1}_{\text{loc}}(B_2; \mathbb{R}^k)$ to the following system:
	\begin{align}\label{Sys:main-interior-2}
		\left\{
		\begin{array}{ll}
			-\Delta u_{i,n}\leq d_{i,n}'- u_{i,n}^{p_i}\sum\limits_{j\neq i}^ka_{ij}M_{i,j;n}u_{j,n}^{p_{j}}\quad&\text{in}~~B_2,\\
			u_{i,n}\geq 0\quad&\text{in}~~B_2,
		\end{array}\right.
	\end{align}
	where $N\geq 3$, $1\leq i\leq k$ with $k\geq 2$, $p_i\geq 1$, $a_{ij}>0$, $d_{i,n}'\geq 0$ and $M_{i,j;n}>0$ for any $j\neq i$. Suppose that there exist $m'>0$ and $d'>0$ independent of $n$ such that
	\begin{align}\label{bdd:u-n-w-1-infty}
		\|u_{i,n}\|_{W^{1,\infty}(B_2)}\leq m',\quad d_{i,n}'\leq d',\quad\forall i=1,2,\dots,k.
	\end{align}
	\par 
	We have the following properties of $\{\bm{u}_n\}$.
	\begin{lemma}\label{lem:property-global-lip-sol}
		Under the previous notation, up to a subsequence, the following assertions hold:\\
		(1) There exists a globally Lipschitz function $\bm{u}$ in $ B_2$ such that $\bm{u}_{n}\to \bm{u}$ in $C_{\text{loc}}(B_2;\mathbb{R}^k)$ as $n\to\infty$.\\
		(2) There holds $\bm{u}_{n}\to \bm{u}$ in $H^1_{\text{loc}}(B_2;\mathbb{R}^k)$ as $n\to\infty$, and for any $0<r<2$, there exists $C>0$ independent of $n$ such that
		\begin{align}\label{eq:property-bdd-betau-i-u-j-2}
			\int_{B_r}u_{i,n}^{p_i}\sum\limits_{j\neq i}^ka_{ij}M_{i,j;n}u_{j,n}^{p_{j}}\leq C,\quad \forall 1\leq i\leq k,
		\end{align}
		in particular, $u_i\cdot u_j\equiv 0$ in $B_2$ if $M_{i,j;n}\to +\infty$ as $n\to\infty$.\\
		(3) Suppose that there exists $C>0$ independent of $n$ such that $M_{i,j;n}\geq C$ and $M_{j,i;n}\geq C$ for some $i\neq j$. If moreover
		\begin{align}\label{cond:H-in-g-in}
			H_i(\bm{u}_{n},0,1)\geq 1/m',\quad H_j(\bm{u}_{n},0,1)\geq 1/m'\quad\forall n\geq 1,
		\end{align}
		then there exists $\varepsilon_0>0$ independent of $n$ such that for any $n\geq 1$ and $s\in \{i,j\}$, one has
		$$\int_{B_1}\left[|\nabla u_{s,n}|^2+u_{s,n}^{p_s+1}\sum_{l\neq s}^ka_{sl}M_{s,l;n}u_{l,n}^{p_{l}}\right]|x|^{2-N}\geq \varepsilon_0,$$
		and 
		$$\frac{\int_{\partial B_1}|\nabla_{\theta} u_{s,n}|^2+ u_{s,n}^{p_s+1}\sum_{l\neq s}^ka_{sl}M_{s,l;n}u_{l,n}^{p_{l}}}{\int_{\partial B_1}u_{s,n}^2}\geq \varepsilon_0.$$
	\end{lemma}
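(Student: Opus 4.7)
Part (1) is immediate from Arzel\`a--Ascoli applied to the uniform $W^{1,\infty}$ bound \eqref{bdd:u-n-w-1-infty}: a subsequence converges locally uniformly on $B_2$ to a globally Lipschitz limit $\bm{u}$, and the gradients $\nabla\bm{u}_n$ converge weakly to $\nabla\bm{u}$ in every $L^p_{\text{loc}}(B_2;\mathbb{R}^{kN})$ with $p<\infty$.

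For part (2), I test the distributional inequality in \eqref{Sys:main-interior-2} against a non-negative cutoff $\eta^2$, with $\eta\in C_c^\infty(B_2)$ and $\eta\equiv 1$ on $\overline{B_r}$; one integration by parts yields
\begin{equation*}
\int u_{i,n}^{p_i}\sum_{j\neq i}a_{ij}M_{i,j;n}u_{j,n}^{p_j}\,\eta^2 \;\leq\; d'\!\!\int \eta^2 \;-\; 2\!\int \eta\,\nabla\eta\cdot\nabla u_{i,n} \;\leq\; C,
\end{equation*}
which is \eqref{eq:property-bdd-betau-i-u-j-2}; uniform convergence $u_{i,n}\to u_i$ combined with this bound forces $u_iu_j\equiv 0$ in $B_2$ whenever $M_{i,j;n}\to\infty$. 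For the strong $H^1_{\text{loc}}$ convergence---more delicate because \eqref{Sys:main-interior-2} is only an inequality and does not directly determine a limit PDE---I introduce the non-negative slack
\begin{equation*}
\nu_{i,n}:=d_{i,n}'+\Delta u_{i,n}-u_{i,n}^{p_i}\sum_{j\neq i}a_{ij}M_{i,j;n}u_{j,n}^{p_j}\,\ge\,0,
\end{equation*}
which is a Radon measure whose total variation on each $B_r\Subset B_2$ is uniformly bounded (test with a cutoff to see this). Passing to a further subsequence, $\nu_{i,n}\rightharpoonup\nu_i$ and the interaction density $u_{i,n}^{p_i}\sum a_{ij}M_{i,j;n}u_{j,n}^{p_j}\,dx\rightharpoonup\mu_i$ weakly-$\ast$ as Radon measures, yielding the genuine identity $-\Delta u_i+\mu_i+\nu_i=\bar{d}_i$ for a subsequential limit $\bar{d}_i$ of $d_{i,n}'$. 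Testing the $n$-identity against $u_{i,n}\eta^2$ and the limit identity against $u_i\eta^2$, I pass to the limit in the measure pairings $\int u_{i,n}\eta^2\,d\nu_{i,n}$ and $\int u_{i,n}^{p_i+1}\eta^2\sum a_{ij}M_{i,j;n}u_{j,n}^{p_j}$ by combining weak-$\ast$ convergence of the measures with uniform convergence of the continuous, compactly supported integrands. This produces $\lim_n\int\eta^2|\nabla u_{i,n}|^2=\int\eta^2|\nabla u_i|^2$; polarization of $\int \eta^2|\nabla(u_{i,n}-u_i)|^2$ then completes the strong $H^1_{\text{loc}}$ convergence.

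For part (3), I argue by contradiction. If the asserted $\varepsilon_0>0$ fails, then for some $s\in\{i,j\}$ and along a subsequence, either the weighted integral $E_n:=\int_{B_1}[|\nabla u_{s,n}|^2+u_{s,n}^{p_s+1}\sum_la_{sl}M_{s,l;n}u_{l,n}^{p_l}]|x|^{2-N}dx\to 0$ or the Rayleigh-type quotient at $r=1$ tends to $0$. Let $l$ denote the paired index in $\{i,j\}\setminus\{s\}$. In the first case, vanishing of $\int |\nabla u_{s,n}|^2|x|^{2-N}$ (controlled on annuli $B_1\setminus B_\rho$ by weak lower semicontinuity and on the residual $B_\rho$ by the uniform Lipschitz bound together with $\int_{B_\rho}|x|^{2-N}\to 0$ as $\rho\to 0^+$) forces $\nabla u_s\equiv 0$, i.e.\ $u_s$ is constant in $B_1$; simultaneously $M_{s,l;n}\ge C$ combined with vanishing of $\int u_{s,n}^{p_s+1}u_{l,n}^{p_l}|x|^{2-N}$ and uniform convergence gives $u_su_l\equiv 0$ in $B_1$. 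The Rayleigh-quotient alternative is handled symmetrically on $\partial B_1$ via the trace compactness afforded by $W^{1,\infty}$: $u_s$ must be constant on $\partial B_1$ and $u_su_l\equiv 0$ there. Passing hypothesis \eqref{cond:H-in-g-in} to the limit yields $H_s(\bm{u},0,1),\,H_l(\bm{u},0,1)\ge 1/m'$, so both traces $u_s|_{\partial B_1}$ and $u_l|_{\partial B_1}$ are nontrivial in $L^2$; but a positive-constant $u_s$ would annihilate $u_l$, while a vanishing $u_s$ would force $H_s=0$---contradiction. The main obstacle is the strong $H^1_{\text{loc}}$ step of part (2): because \eqref{Sys:main-interior-2} is an inequality rather than an equation, the limit PDE for $\bm u$ is not directly accessible, and the slack-measure device together with the Radon-measure pairing arguments is essential to recover an exact identity in the limit.
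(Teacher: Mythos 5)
Your proof is correct in all three parts, and the core analytic tools (Arzel\`a--Ascoli, Fatou, weak lower semicontinuity on the sphere, passing hypothesis \eqref{cond:H-in-g-in} to the limit) agree with the paper's. Two structural differences are worth noting. For Point (2), the paper only sketches the argument by citation (``argue in a similar way as in [Lemma 3.1, Zhang-Zhang2025]''), whereas you spell out a self-contained slack-measure argument: since \eqref{Sys:main-interior-2} is an inequality rather than an equation, you introduce $\nu_{i,n}\ge 0$ to recover an exact distributional identity, pass the measure pairings to the limit against the uniformly convergent integrands, and conclude the $L^2$-norms of gradients converge. This is a valid route and correctly addresses the subtlety that the paper leaves implicit. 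For Point (3), the paper argues directly with a dichotomy on whether $u_i u_j\equiv 0$ on $\partial B_1$, invoking in the second case an explicit Poincar\'e lower bound on the sphere associated to the proper set $E_s\subsetneq\mathbb{S}^{N-1}$; you instead argue by contradiction, showing vanishing of the numerator forces $u_s$ to be constant (by weak lower semicontinuity of the Dirichlet integral) and $u_s u_l\equiv 0$ (via Fatou and $M_{s,l;n}\ge C$), then deriving the contradiction from $H_s,H_l\ge 1/m'$ in the limit. Your contradiction route avoids the quantitative spherical Poincar\'e inequality but gives only $\liminf>0$; to literally reach ``for any $n\ge 1$'' as the lemma states you would also want to note (as follows quickly from \eqref{cond:H-in-g-in} applied to each fixed $n$) that neither quantity can actually equal zero for any finite $n$, so the uniform $\varepsilon_0$ exists after adjusting for finitely many indices. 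This is a minor omission; the substance of your argument matches the paper's and is sound.
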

	\begin{proof}
		Point (1) follows by the Ascoli-Arzel\`{a} theorem. Point (2) can be deduced via a simple calculation; one can argue in a similar way as in \cite[Lemma 3.1]{Zhang-Zhang2025} to obtain the desired result. 
		\par 
		It remains to prove Point (3). By Point (1) and \eqref{cond:H-in-g-in}, we deduce that $H_i(\bm{u},0,1)\geq \frac{1}{m'}$ and $H_j(\bm{u},0,1)\geq \frac{1}{m'}$, which implies $u_i\not\equiv 0$ and $u_j\not\equiv 0$ on $\partial B_1$.
		To prove Point (3), we divide the proof into two cases.
		\par 
		\textbf{Case 1. }$u_i\cdot u_j\not\equiv 0$ on $\partial B_1$. In this case, by continuity, there exists $x_0\in \partial B_1$ and $0<r<1$ such that $u_i>0$ and $u_j>0$ in $B_r(x_0)$. Then for $s\in \{i,j\}$, one has by Fatou Lemma, 
		$$\liminf_{n\to \infty}\int_{B_1}\left[|\nabla u_{s,n}|^2+u_{s,n}^{p_s+1}\sum_{l\neq s}^ka_{sl}M_{s,l;n}u_{l,n}^{p_{l}}\right]|x|^{2-N}\geq C\int_{B_1}u_{s}^{p_s+1}\sum_{l\neq s,l\in \{i,j\}}^ka_{sl}u_{l}^{p_{l}}>0,$$
		and
		\begin{align*}
			\liminf_{n\to \infty}\frac{\int_{\partial B_1}|\nabla_{\theta} u_{s,n}|^2+ u_{s,n}^{p_s+1}\sum_{l\neq s}^ka_{sl}M_{s,l;n}u_{l,n}^{p_{l}}}{\int_{\partial B_1}u_{s,n}^2}
			\geq &\liminf_{n\to \infty}\frac{C\int_{\partial B_1}u_{s,n}^{p_s+1}\sum_{l\neq s,l\in \{i,j\}}^ka_{sl}u_{l,n}^{p_{l}}}{\int_{\partial B_1}u_{s,n}^2}\\
			&\geq \frac{C\int_{\partial B_1}u_{s}^{p_s+1}\sum_{l\neq s,l\in \{i,j\}}^ka_{sl}u_{l}^{p_{l}}}{\int_{\partial B_1}u_{s}^2}>0.
		\end{align*}
		\par 
		\textbf{Case 2. }$u_i\cdot u_j\equiv 0$ on $\partial B_1$. In this case, we have that $u_i$ and $u_j$ are non-trivial and non-constant in $B_1$. Hence, for $s\in \{i,j\}$, one has
		$$\liminf_{n\to \infty}\int_{B_1}\left[|\nabla u_{s,n}|^2+u_{s,n}^{p_s+1}\sum_{l\neq s}^ka_{sl}M_{s,l;n}u_{l,n}^{p_{l}}\right]|x|^{2-N}\geq \int_{B_1}|\nabla u_s|^2>0.$$
		On the other hand, since $u_i$ and $u_j$ are non-trivial on $\partial B_1=\mathbb{S}^{N-1}$, we deduce that $0<\mathcal{H}^{N-1}(E_s)<\sigma_{N-1}=\mathcal{H}^{N-1}(\mathbb{S}^{N-1})$, where $s\in \{i,j\}$ and $E_s:=\{x\in \mathbb{S}^{N-1}:u_s(x)>0\}$. Here, $\mathcal{H}^{N-1}$ denotes the $(N-1)$-dimensional Hausdorff measure in $\mathbb{R}^N$. This implies that 
		\begin{align}\label{eq:lower-bdd-on-sphere}
			\frac{\int_{\partial B_1}|\nabla_{\theta} u_{s}|^2}{\int_{\partial B_1}u_{s}^2}\geq \inf_{v\in H^1_0(E_s)}\frac{\int_{\partial B_1}|\nabla_{\theta} v|^2}{\int_{\partial B_1}v^2}>0,\quad \forall s\in \{i,j\}.
		\end{align}
		Since $\bm{u}_n\in  C^{1}_{\text{loc}}(B_2;\mathbb{R}^k)$, by \eqref{bdd:u-n-w-1-infty}, we see that  $\{\bm{u}_n\}$ is bounded in $H^1(\mathbb{S}^{N-1};\mathbb{R}^k)$.  By the Sobolev compact embedding theorem, up to a subsequence, one has for any $1\leq i\leq k$,
		$$u_{i,n}\rightharpoonup u_i\quad \text{weakly  in}~~H^1(\mathbb{S}^{N-1}),\quad u_{i,n}\to  u_i\quad \text{strongly  in}~~L^2(\mathbb{S}^{N-1})\quad \text{as}~~n\to\infty.$$
		Combining this with \eqref{eq:lower-bdd-on-sphere} yields
		\begin{align*}
			&\liminf_{n\to \infty}\frac{\int_{\partial B_1}|\nabla_{\theta} u_{s,n}|^2+ u_{s,n}^{p_s+1}\sum_{l\neq s}^ka_{sl}M_{s,l;n}u_{l,n}^{p_{l}}}{\int_{\partial B_1}u_{s,n}^2}\\
			\geq &\liminf_{n\to \infty}\frac{\int_{\partial B_1}|\nabla_{\theta} u_{s,n}|^2}{\int_{\partial B_1}u_{s,n}^2}
			\geq \frac{\int_{\partial B_1}|\nabla_{\theta} u_{s}|^2}{\int_{\partial B_1}u_{s}^2}>0,\qquad \forall s\in \{i,j\}.
		\end{align*}
		The proof is finished.
	\end{proof}
	\par 
	\par 
	We end this section by recalling the following useful lemma which is abtained in \cite[Lemma 2.2]{Soave-Z2015ARMA}.
	\begin{lemma}[\cite{Soave-Z2015ARMA}]\label{lem:decay-Mu-p}
		Let $N\geq 1$, $x_0\in \mathbb{R}^N$ and $A,M,\delta,\rho>0$. Let  $u\in H^1(B_{2\rho}(x_0))\cap C(\overline{B_{2\rho}(x_0)})$ be a non-negative subsolution to 
		\begin{align*}
			\left\{
			\begin{array}{ll}
				-\Delta u\leq -Mu^p+\delta \quad&\text{in}~~B_{2\rho}(x_0),\\
				u\leq A\quad&\text{on}~~\partial B_{2\rho}(x_0)
			\end{array}\right.
		\end{align*}
		for some $p\geq 1$. Then there exists a constant $C>0$ depending only on $N$ such that 
		$$Mu(x)^p\leq \frac{CA}{\rho^2}+\delta,\quad \forall x\in B_{\rho}(x_0).$$
	\end{lemma}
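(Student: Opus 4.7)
The statement is an interior absorption-type estimate, and my plan is to establish it by comparison with an explicit radial supersolution. The natural tool is the weak comparison principle for the nonlinear operator $v \mapsto -\Delta v + Mv^p$, which is monotone on $[0,\infty)$ because $v\mapsto v^p$ is non-decreasing. So the whole task reduces to constructing an explicit barrier that (i) dominates $u$ on $\partial B_{2\rho}(x_0)$, (ii) is a supersolution to $-\Delta V + MV^p = \delta$, and (iii) has the right decay into $B_\rho(x_0)$.

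\textbf{Reduction and construction.} First I translate so $x_0=0$ and rescale via $y=x/\rho$, $\tilde u(y):=u(\rho y)$, which turns the problem into
$$-\Delta\tilde u + \widetilde M\,\tilde u^{\,p} \leq \widetilde\delta \ \text{ in } B_2, \qquad \tilde u\leq A \ \text{ on }\partial B_2,$$
with $\widetilde M=M\rho^2$, $\widetilde\delta=\rho^2\delta$; the target becomes $\widetilde M\tilde u^{\,p}\leq CA+\widetilde\delta$ on $B_1$. Setting $K:=(\widetilde\delta/\widetilde M)^{1/p}$, I would try a radial supersolution of the form
$$V(y) := K + \lambda\,\Phi(|y|),$$
with $\lambda>0$ to be chosen and $\Phi:[0,2)\to[0,\infty)$ a Keller--Osserman-type profile which blows up as $r\to 2^-$ and is bounded by a dimensional constant on $[0,1]$. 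For $p>1$ the classical choice $\Phi(r)=(4-r^2)^{-2/(p-1)}$ is natural, while for $p=1$ I would replace it by a radial modified-Bessel-type profile satisfying $-\Delta\Phi+\widetilde M\Phi\geq 0$. Using the elementary inequality $(a+b)^p\geq a^p+b^p$ for $a,b\geq 0$ and $p\geq 1$, one gets $V^p\geq K^p+\lambda^p\Phi^p$, so the supersolution inequality $-\Delta V+\widetilde M V^p\geq\widetilde\delta$ reduces, after cancelling $\widetilde MK^p=\widetilde\delta$, to the purely Keller--Osserman-type condition $-\lambda\Delta\Phi+\widetilde M\lambda^{\,p}\Phi^p\geq 0$, which is verified by a direct radial computation for each $p\geq 1$.

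\textbf{Comparison and conclusion.} With $V\equiv +\infty$ on $\partial B_2$ (formally, applying the comparison on $B_{2-\varepsilon}$ and passing to the limit) one has $V\geq\tilde u$ on the boundary, and weak comparison then yields $\tilde u\leq V$ on $\overline{B_2}$. Restricting to $B_1$ and using $\Phi\leq c_N$ there gives $\tilde u\leq K+\lambda c_N$; choosing $\lambda$ of order $(A/\widetilde M)^{1/p}$ (the smallest value compatible with the boundary domination modulo the blow-up of $\Phi$) produces $\widetilde M\tilde u^{\,p}\leq C_N A+\widetilde\delta$ on $B_1$, which is the desired bound after undoing the rescaling. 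The most delicate point of the proof, and the one I expect to require the most care, is the $p=1$ regime: there the Keller--Osserman barrier degenerates and one must substitute an exponential-type radial barrier, then verify the correct balance between the boundary-matching constant $\lambda$ and the interior decay so that the final multiplicative constant in $\widetilde M\tilde u^{\,p}\leq C A+\widetilde\delta$ depends only on $N$. A clean way to handle both regimes uniformly is to first reduce the supersolution construction to a one-dimensional ODE in the radial variable, and then lift the resulting barrier to $N$ dimensions by radial symmetry.
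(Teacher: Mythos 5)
The lemma is quoted in the paper verbatim from \cite{Soave-Z2015ARMA} without a proof, so there is no in-paper argument to compare against; the question is whether your barrier construction actually delivers the stated estimate, and I do not think it does. The central difficulty is in the choice of $\lambda$. For $p>1$ you take a Keller--Osserman profile $\Phi$ that blows up as $|y|\to 2^-$, which makes the boundary inequality $V\geq\tilde u$ on $\partial B_2$ hold \emph{for every} $\lambda>0$, regardless of $A$. The only genuine constraint on $\lambda$ then comes from the supersolution inequality $-\Delta\Phi+\widetilde M\lambda^{p-1}\Phi^p\geq 0$, which (with $\Phi(r)=(4-r^2)^{-2/(p-1)}$, so that $\Delta\Phi=2\alpha(4-r^2)^{-\alpha-2}\bigl[N(4-r^2)+2(\alpha+1)r^2\bigr]$ with $\alpha=2/(p-1)$ and the exponents line up exactly because $\alpha p=\alpha+2$) forces $\widetilde M\lambda^{p-1}\geq c(N,p)$. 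Taking the smallest admissible $\lambda$ and restricting to $B_1$ gives $\tilde u\leq K+\lambda_{\min}\cdot 3^{-\alpha}$ with $\lambda_{\min}=(c(N,p)/\widetilde M)^{1/(p-1)}$, and hence $\widetilde M\tilde u^{\,p}\leq 2^{p-1}\bigl(\widetilde\delta+c(N,p)^{p/(p-1)}\widetilde M^{-1/(p-1)}\bigr)$. This is an \emph{$A$-independent} interior absorption estimate --- essentially the content of the paper's Lemma~\ref{lem:decay-Mu-p-general} --- and not the claimed bound $\widetilde M\tilde u^{\,p}\leq C_N A+\widetilde\delta$. The phrase ``choosing $\lambda$ of order $(A/\widetilde M)^{1/p}$, the smallest value compatible with the boundary domination'' has no force here, because the boundary domination places no lower bound on $\lambda$ once $\Phi$ blows up; and if $A\ll c(N,p)^{p/(p-1)}\widetilde M^{-1/(p-1)}$, you cannot take $\lambda$ as small as $(A/\widetilde M)^{1/p}$ without violating the supersolution inequality, so the $A$-dependence in the conclusion simply does not materialize from this barrier.

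There is a second, structural problem: the statement promises $C$ depending only on $N$, uniformly in $p\geq 1$, but the Keller--Osserman constant $c(N,p)$ and hence $c(N,p)^{p/(p-1)}$ blow up as $p\to 1^+$ (the profile $(4-r^2)^{-2/(p-1)}$ degenerates). Your plan correctly flags $p=1$ as requiring an exponential barrier, but there is no way to glue the two barriers so that the multiplicative constant stays bounded as $p\downarrow 1$; the KO estimate simply is not the right tool in that regime. The balance that makes the true estimate $p$-uniform comes from exploiting the \emph{boundary datum} $A$ (for instance, an exponential- or Bessel-type profile with $\Phi(2)=1$, bounded on $\overline{B_2}$, with $\lambda=A$ forced by the boundary) combined with the elementary max-principle bound $\tilde u\leq A+2\widetilde\delta/N$ --- and even then the verification that the supersolution inequality holds with an $N$-only constant for all $p\geq 1$ is the nontrivial point. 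As written, the plan proves a different (and in some parameter ranges much weaker) estimate than the one asserted in Lemma~\ref{lem:decay-Mu-p}.
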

	
	\section{An Alt-Caffarelli-Friedman type monotonicity formula}\label{sec:3}
	\setcounter{section}{3}
	\setcounter{equation}{0}
	In this section, we establish an Alt-Caffarelli-Friedman type monotonicity formula for system \eqref{Sys:main-interior-1}. To our knowledge, this perturbed Alt-Caffarelli-Friedman monotonicity formula is new, and it is a key tool for us to estimate the uniform interior Lipschitz bounds of solutions to system \eqref{Sys:main-interior-general}. The primary difficulty arises from the non-variational structure of the system. To address this issue, we establish a non-variational Poincaré-type lemma on $\mathbb{S}^{N-1}$, leveraging suitable permutation arguments, the Moser iteration technique, and the classical variational Poincaré-type lemma (see Lemma \ref{lem:poincare-sphere} below).
	\par 
	The main result in this section is as follows. Recall that the functions $H_i,\Lambda_i$ and $J_i$ are defined in \eqref{def:H-J-Lambda}, $\sigma_{N-1}=\mathcal{H}^{N-1}(\mathbb{S}^{N-1})>0$ and $\kappa=\min\{a_{ij}:1\leq i,j\leq k~\text{with}~i\neq j\}>0$.
	\begin{theorem}\label{thm:ACF-formula}
		Let  $N\geq 3$, $\Omega'$ be a domain in $\mathbb{R}^N$, $k\geq 2$, $d'>0$, $p_i\geq 1$, $a_{ij}>0$, $M_{ij}=M_{ji}>0$ for any $1\leq i,j\leq k$ with $i\neq j$ and $K\Subset\Omega'$ be a compact set. Let $\mu>0$, $R>2$, $x_0\in K$ and $\bm{u}\in C_{\text{loc}}^1(\Omega';\mathbb{R}^k)$ be a weak solution of system \eqref{Sys:main-interior-1}. Assume that\\
		(h0) $B_R(x_0)\subseteq\Omega'$;\\
		(h1) $H_i(\bm{u},x_0,r)\geq \mu$ for any $r\in (2,R)$ and $i=1,2$;\\
		(h2) $J_i(\bm{u},x_0,r)>0$ and $\Lambda_i(\bm{u},x_0,r)>0$ for any $r\in (2,R)$ and $i=1,2$;\\
		(h3) $d'r^2\sigma_{N-1}^{1/2}H_i(\bm{u},0,r)^{-1/2}\leq \left(\frac{N-2}{2}\right)^2$ for any $r\in (2,R)$ and $i=1,2$.\\
		Then there exists a positive constant $C=C(\kappa,\mu,N,p_1,p_2)>0$ (independent of $(M_{ij})$, $x_0$, $d'$ and $R$) such that the function
		$$r\mapsto \frac{J_1(\bm{u},x_0,r)J_2(\bm{u},x_0,r)}{r^4}\exp\{-C(M_{12}\cdot r^2)^{-1/(2p+2)}+Cd' \int_{2}^{r}tH(t)^{-1/2}dt\}$$
		is monotone non-decreasing in $(2,R)$, where $p:=\max\{p_1,p_2\}\geq 1$ and 
		$$H(t):=\min_{i\in \{1,2\}}H_i(\bm{u},x_0,t).$$
	\end{theorem}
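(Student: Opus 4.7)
The plan is to adapt the classical Alt-Caffarelli-Friedman (ACF) scheme by differentiating $\log(J_1 J_2 / r^4)$ and bounding the result from below through a perturbed Friedland-Hayman inequality on the sphere that absorbs both the competition penalty (yielding the $M_{12}$ correction) and the Carath\'eodory forcing $d'$ (yielding the integral correction in $H$). Differentiating the definition \eqref{def:H-J-Lambda} gives
\[ J_i'(r) = r^{2-N}\int_{\partial B_r(x_0)}\Bigl[|\nabla u_i|^2 + u_i^{p_i+1}\sum_{j\neq i}a_{ij}M_{ij}u_j^{p_j} - d'u_i\Bigr], \]
and combining this with Lemma \ref{lem:est:J-i(r)}(3) produces
\[ \frac{J_i'(r)}{J_i(r)} \geq \frac{2\gamma(\Lambda_i(\bm{u},x_0,r))}{r}, \qquad i=1,2. \]
Therefore the theorem will follow once one proves the pointwise lower bound
\[ \gamma(\Lambda_1) + \gamma(\Lambda_2) \geq 2 - C_1 (M_{12} r^2)^{-1/(2p+2)} - C_2 \frac{d' r^2}{H(r)^{1/2}} \]
on $(2,R)$: since $\frac{d}{dr}[(M_{12}r^2)^{-1/(2p+2)}] = -\frac{1}{(p+1)r}(M_{12}r^2)^{-1/(2p+2)}$, integrating this differential inequality produces precisely the exponential weight displayed in the theorem.

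Next, rescaling to the unit sphere by $v_i(\theta):=u_i(x_0+r\theta)$ reduces the task to estimating the Rayleigh-like quantities
\[ \Lambda_i = \frac{\int_{\mathbb{S}^{N-1}}|\nabla_\theta v_i|^2 + r^2 v_i^{p_i+1}\sum_{j\neq i} a_{ij}M_{ij} v_j^{p_j} - d'r^2 v_i}{\int_{\mathbb{S}^{N-1}} v_i^2}. \]
Hypothesis (h3) combined with Cauchy-Schwarz controls the $d'$-linear term by $(N-2)^2/4$, so $\Lambda_i$ stays in the domain of $\gamma$ and the $d'$ contribution to the displayed lower bound is $O(d' r^2 H^{-1/2})$ by the local Lipschitz continuity of $\gamma$. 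The heart of the proof is a quantitative approximation of $(v_1,v_2)$ by a pair with disjoint supports: for $\varepsilon>0$ and the overlap set $S_\varepsilon := \{v_1>\varepsilon\}\cap\{v_2>\varepsilon\} \subset \mathbb{S}^{N-1}$, one obtains
\[ |S_\varepsilon| \leq \frac{C}{\kappa\,M_{12}r^2\,\varepsilon^{p_1+p_2+1}} \]
using that on $S_\varepsilon$ the integrand $v_1^{p_1+1}v_2^{p_2}$ is bounded below by $\varepsilon^{p_1+p_2+1}$, while its total integral is controlled via the upper bound $\Lambda_i \int v_i^2$ on the penalty inside $\Lambda_i$. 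A non-variational Poincar\'e-type lemma on $\mathbb{S}^{N-1}$ is then invoked: one symmetrizes the asymmetric pair by a suitable permutation argument, truncates $v_i$ to $(v_i-\varepsilon)_+\,\chi_{\mathbb{S}^{N-1}\setminus S_\varepsilon}$ so the supports become disjoint, uses Moser iteration to control the change in the Rayleigh quotient by a power of $\varepsilon$, and finally applies the classical Friedland-Hayman inequality $\gamma(\tilde\Lambda_1)+\gamma(\tilde\Lambda_2)\geq 2$ to the truncated functions $\tilde v_i$. Optimizing $\varepsilon \sim (M_{12}r^2)^{-1/(2p+2)}$ balances the two competing error contributions and produces the claimed rate.

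The main obstacle I expect is the rigorous justification of this last step: in the classical variational setting one compares $v_i$ to the first Dirichlet eigenfunction on the spherical cap $\{v_i>0\}$, a comparison that relies on the symmetry lost here because $a_{ij}\neq a_{ji}$. Symmetrizing the competition penalty by permutation (so as to replace it by $\tfrac{1}{2}(a_{ij}+a_{ji})M_{ij}$, which is comparable to the original up to a constant since all coefficients are bounded below by $\kappa>0$), and then translating the integral bound $\int v_1^{p_1+1}v_2^{p_2}\lesssim(M_{12}r^2)^{-1}$ into pointwise disjointness via Moser iteration, are the delicate technical ingredients; the exponent $1/(2p+2)$ and the constant $C=C(\kappa,\mu,N,p_1,p_2)$ arise directly from balancing these estimates, with hypothesis (h1) providing the lower bound $H(r)\geq\mu$ needed to keep the Rayleigh denominators from collapsing.
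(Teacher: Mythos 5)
Your proposal gets the outer scaffolding exactly right and it coincides with the paper's: differentiate $\log(J_1J_2/r^4)$, bound $J_i'/J_i$ from below via Lemma \ref{lem:est:J-i(r)}(3), absorb the $d'$ term with Cauchy--Schwarz and (h3), rescale to $\mathbb{S}^{N-1}$, and reduce everything to a perturbed Friedland--Hayman inequality whose error produces the displayed exponential weight after integration. Where you diverge from the paper is in how that spherical inequality is established, and this is where your argument has a genuine gap.

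The paper proves a self-contained non-variational Poincar\'e-type statement (Lemma \ref{lem:poincare-sphere-LV}) for \emph{arbitrary} pairs in $H^1(\mathbb{S}^{N-1})$, whose two $\gamma$-arguments are allowed to carry the different penalty exponents $(p_1+1,p_2)$ and $(p_1,p_2+1)$, so the asymmetry of $(a_{ij})$ and the nonhomogeneity of $\bm{\hat{g}}$ are handled natively without any symmetrization by permutation. The lemma is proved by introducing a three-parameter truncated functional $\widetilde{J}_{m,s,\varepsilon}$, producing a minimizer, applying the Lagrange multiplier rule, running Moser iteration on the resulting Euler--Lagrange system \eqref{Eq:LV-sphere-s} on $\mathbb{S}^{N-1}$ to get an $n$-uniform $L^\infty$ bound, and passing to the limit; only then is the known rate from \cite{Soave-Z2015ARMA} applied to the bounded minimizer. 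You instead propose to truncate the specific sphere traces $v_i(\theta)=u_i(x_0+r\theta)$, measure the overlap set $S_\varepsilon$ via the penalty integral, remove $S_\varepsilon$ to produce disjoint supports, and control the resulting perturbation of the Rayleigh quotients ``by Moser iteration.'' This last step is the gap: Moser iteration requires a PDE, and neither the traces $v_i$ nor the generic competitors $(u,v)\in H_1$ satisfy any equation on $\mathbb{S}^{N-1}$; the bulk PDE for $u_i$ in $B_R(x_0)$ does not transfer to a subsolution property of the trace on the sphere. The paper applies Moser iteration only to the Euler--Lagrange system of the minimizer, which \emph{is} a PDE on the sphere; this is precisely the difficulty the paper flags for $N\geq 4$ and $p>p_S-1$ (``the challenge arises because $u_0,v_0\in L^{p+1}(\mathbb{S}^{N-1})$ are not known a priori''). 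Without some surrogate for $L^\infty$ or higher integrability, your truncation error can only be bounded through the Sobolev embedding of $H^1(\mathbb{S}^{N-1})$, which degrades with $N$ and does not reproduce the dimension-free rate $(M_{12}r^2)^{-1/(2p+2)}$ claimed in the theorem. You should either build the $L^\infty$-bounded minimizer as the paper does, or replace the vague Moser iteration step with a correct mechanism for higher integrability of the competitors.
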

	\par 
	We refer the reader to \cite{Alt-Caffarelli-Friedman1984trans} for the original Alt-Caffarelli-Friedman monotonicity formula. 
	\par 
	When the interaction is Gross-Pitaevskii type $\beta u_{i}^p\sum\limits_{j\neq i}^ka_{ij}u_{j}^{p+1}$($i=1,\dots,k$), similar result with an additional condition (i.e., $H_1(\bm{u},x_0,r)/H_2(\bm{u},x_0,r)$ has uniform positive upper and lower bounds in $r\in (1,R)$ ) was first deduced by Wang \cite{Wang-kelei2014CPDE} for $p=1$, later by Soave-Zilio \cite{Soave-Z2015ARMA} for general $1\leq p\leq p_S-1$, and recently by Dias-Tavares \cite{Dias-T2023na} for general variational elliptic operators. Here, $p_S=p_S(N-1):=\frac{2(N-1)}{(N-3)^+}$ is the Sobolev critical exponent for $H^1(\mathbb{S}^{N-1})$, where $\frac{2(N-1)}{(N-3)^+}:=+\infty$ for $N=2,3$. We \cite{Zhang-Zhang2025} recently observe that this condition can be droped when dealing with the Gross-Pitaevskii  systems. The main ingredient of the proofs  is a variational  Poincar\'{e}-type lemma on $\mathbb{S}^{N-1}$ (see \cite[Lemma 4.2]{Wang-kelei2014CPDE} and \cite[Lemma 3.16, Remark 3.17]{Soave-Z2015ARMA}), which was proved in the subcritical or critical case, i.e., $1\leq p\leq p_S-1$. We remark that in the supercritical case, the validity of the Poincar\'{e}-type lemma remains unknown (for further details, see the opening part of the proof of Lemma \ref{lem:poincare-sphere} below). 
	\par 
	To establish Theorem \ref{thm:ACF-formula}, we begin by proving the variational  Poincar\'{e}-type lemma under supercritical conditions. The main challenge stems from the lack of regularity of the minimizers of the corresponding energy functional, which is caused by the supercriticality. To address this issue, we conduct a perturbation argument to find a minimizer that belongs to $(L^{\infty}(\mathbb{S}^{N - 1}))^2$. Subsequently, we can follow the proof of \cite[Lemma 3.16]{Soave-Z2015ARMA} to attain the desired result. For convenience, we state it as follows, including the subcritical and critical cases that are already known as mensioned above.
	\begin{lemma}\label{lem:poincare-sphere}
		Let $N\geq 3$, $p\geq 1$ and
		\begin{align}\label{def:H1-space}
			H_1:=\{(u,v)\in \left(H^1(\mathbb{S}^{N-1})\right)^2:\int_{\mathbb{S}^{N-1}}u^2=\int_{\mathbb{S}^{N-1}}v^2=1\}.
		\end{align} 
		Then there exists $C=C(N,p)>0$ such that if
		$$\sigma>0,\quad 0\leq \varepsilon\leq \left(\frac{N-2}{2}\right)^2,$$
		then
		\begin{align*}
			&\min_{(u,v)\in H_1}\gamma\left(\int_{\mathbb{S}^{N-1}}|\nabla_{\theta}u|^2+\sigma |u|^{p+1}|v|^{p+1}-\varepsilon u^2\right)+\gamma\left(\int_{\mathbb{S}^{N-1}}|\nabla_{\theta}v|^2+\sigma |u|^{p+1}|v|^{p+1}-\varepsilon v^2\right)\\
			\geq &2-C(\varepsilon+\sigma^{-1/(2p+2)}),
		\end{align*}
		where the function $\gamma$ is defined in \eqref{def:gamma-function}.
	\end{lemma}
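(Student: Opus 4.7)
The subcritical-critical range $1\leq p\leq p_S(N-1)-1$ (which covers all $p\geq 1$ when $N\in\{2,3\}$, since $p_S(N-1)=+\infty$ in those dimensions) is already established in \cite{Wang-kelei2014CPDE,Soave-Z2015ARMA,Zhang-Zhang2025}. My plan is therefore to treat the supercritical case $N\geq 4$ and $p>p_S(N-1)-1$, reducing it to the subcritical machinery by first producing a minimizer of the functional
$$F(u,v):=\gamma\!\left(\int_{\mathbb{S}^{N-1}}|\nabla_{\theta}u|^2+\sigma|u|^{p+1}|v|^{p+1}-\varepsilon u^2\right)+\gamma\!\left(\int_{\mathbb{S}^{N-1}}|\nabla_{\theta}v|^2+\sigma|u|^{p+1}|v|^{p+1}-\varepsilon v^2\right)$$
on $H_1$ that belongs to $(L^{\infty}(\mathbb{S}^{N-1}))^2$, and then running the partition/Friedland--Hayman argument of \cite[Lemma 3.16]{Soave-Z2015ARMA} verbatim.

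For existence of a minimizer I would use the direct method. A minimizing sequence $(u_n,v_n)$ is bounded in $(H^1(\mathbb{S}^{N-1}))^2$: the constraint fixes the $L^2$-norms, the hypothesis $\varepsilon\leq ((N-2)/2)^2$ controls the $-\varepsilon u^2$ term, and the nonnegativity of the interaction term together with boundedness of $F$ (and hence of each argument of $\gamma$, using monotonicity) yields a uniform Dirichlet bound. Along a weakly convergent subsequence, strong $L^2$-convergence from Rellich--Kondrachov preserves the constraint, Fatou's lemma handles the non-negative interaction integral, weak lower semicontinuity handles the Dirichlet term, and continuity and monotonicity of $\gamma$ transfer the lower semicontinuity to $F$. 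Hence a minimizer $(u_\ast,v_\ast)\in H_1$ exists, and one may assume $u_\ast,v_\ast\geq 0$ since $F(|u|,|v|)\leq F(u,v)$.

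The main obstacle is obtaining an $L^\infty$-bounded minimizer when $2(p+1)$ exceeds the Sobolev critical exponent on $\mathbb{S}^{N-1}$: in this range the Euler--Lagrange equation cannot be derived from $H^1$-variations directly. I therefore regularize the interaction. Letting $T_\eta$ be a smooth cutoff with $T_\eta(s)\leq \eta^{-1}$ and $T_\eta(s)\uparrow s$ as $\eta\to 0^+$, set $F_\eta(u,v)$ to be the analogue of $F$ with $|u|^{p+1}|v|^{p+1}$ replaced by $T_\eta(|u|^{p+1}|v|^{p+1})$. For each $\eta>0$ the nonlinearity is bounded, a minimizer $(u_\eta,v_\eta)\in H_1$ exists, and standard Lagrange-multiplier variations in $H^1$ yield an Euler--Lagrange equation of the form
$$-\Delta_{\theta}u_\eta+c_\eta(x)\,|u_\eta|^{p-1}u_\eta=\Lambda_\eta u_\eta\quad\text{on }\mathbb{S}^{N-1},$$
where $c_\eta\geq 0$ (combining $\gamma'(I_j^\eta)>0$ and the $v_\eta^{p+1}$ factor) and $\Lambda_\eta\in\mathbb{R}$. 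Since the nonlinear term has the same sign as $u_\eta$, it can be discarded to give the purely linear inequality $-\Delta_{\theta}|u_\eta|\leq |\Lambda_\eta|\,|u_\eta|$ on $\mathbb{S}^{N-1}$. A classical Moser iteration (using the Sobolev embedding on $\mathbb{S}^{N-1}$, available since $N-1\geq 3$) then yields $u_\eta\in L^{\infty}(\mathbb{S}^{N-1})$, and analogously $v_\eta\in L^{\infty}(\mathbb{S}^{N-1})$; with additional care to control $\Lambda_\eta$ (via testing the EL equation against $u_\eta$ and exploiting the uniform $H^1$-bound inherited from $F_\eta\leq F$), these bounds are uniform in $\eta$. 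Passing $\eta\to 0^+$ through a subsequence of minimizers produces an $(L^\infty)^2$-minimizer $(u_\ast,v_\ast)$ of $F$ itself.

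With $(u_\ast,v_\ast)$ bounded, the interaction term is pointwise controlled and the proof of \cite[Lemma 3.16]{Soave-Z2015ARMA} applies without modification: one partitions $\mathbb{S}^{N-1}$ into the sets where each density dominates, applies the Friedland--Hayman/Alt--Caffarelli--Friedman inequality on the resulting complementary regions (producing the baseline $2$), and estimates the overlap contribution by $C\sigma^{-1/(2p+2)}$ using the concavity of $\gamma$ together with the $L^\infty$-bound on $(u_\ast,v_\ast)$; the $-\varepsilon u^2$ perturbation yields the $C\varepsilon$ correction. The heart of the argument, and the only genuinely new ingredient beyond \cite{Soave-Z2015ARMA}, is the supercritical $L^\infty$-regularity step above, which is unlocked by the observation that the competition term enters the Euler--Lagrange equation with a favorable sign, allowing a linear Moser iteration despite the supercritical exponent.
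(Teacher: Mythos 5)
Your proposal is essentially correct and mirrors the paper's proof of this lemma: in the supercritical case $N\geq 4$, $p>p_S(N-1)-1$, both arguments produce an $(L^\infty(\mathbb{S}^{N-1}))^2$-bounded minimizer by regularizing the functional so that the Lagrange-multiplier rule applies, exploiting the favorable sign of the competition term in the resulting Euler--Lagrange system to obtain a linear differential inequality amenable to Moser iteration, and then passing to the limit to recover a bounded minimizer of the original functional, after which the subcritical argument of \cite[Lemma 3.16]{Soave-Z2015ARMA} applies verbatim. The only real divergence is a cosmetic one in the regularization device: you truncate the product $|u|^{p+1}|v|^{p+1}$ via $T_\eta$, whereas the paper truncates $u$ and $v$ separately via the $C^1$ cutoffs $\zeta_{m,s}(u)$, $\zeta_{m,s}(v)$ --- the latter choice makes the Lagrange-multiplier bound immediate (using $\zeta_{m,s}'(t)=0$ for $t\geq m+s$ and $\zeta_{m,s}(t)\geq t/2$), whereas your truncation would need a structural property such as $T_\eta'(s)\,s\lesssim T_\eta(s)$ (unstated in your outline) to obtain the same uniform-in-$\eta$ control of $\Lambda_\eta$; and fixing the paper's cutoff level $m$ above the universal $L^\infty$ bound makes the truncation inactive on the minimizer, which slightly streamlines the final limit passage you perform by monotone convergence.
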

	\begin{proof}
		Note that it suffices to consider the problem for $\varepsilon=0$ due to the fact that
		$$\gamma(t-\varepsilon)\geq \gamma(t)-\frac{2}{N-2}\varepsilon,\quad \forall t\geq 0,~\forall 0\leq\varepsilon\leq \left(\frac{N-2}{2}\right)^2.$$
		\par 
		If $p = 1$, this result was established in \cite[Lemma 4.2]{Wang-kelei2014CPDE} and \cite[Lemma 3.16]{Soave-Z2015ARMA}. For general $p \geq 1$, the authors in \cite[Remark 3.17]{Soave-Z2015ARMA} claimed that it can be proven following the same approach as in the proof of \cite[Lemma 3.16]{Soave-Z2015ARMA}, with the aid of \cite[Lemma 2.2]{Soave-Z2015ARMA} (i.e., Lemma \ref{lem:decay-Mu-p}). However, such a proof is only valid for $N = 3$ or for $N \geq 4$ with $1 \leq p \leq p_S - 1$. This restriction arises because applying the Lagrange multiplier rule as in \cite[(3.7), p. 673]{Soave-Z2015ARMA} requires that for any $\eta \in C^{\infty}(\mathbb{S}^{N-1})$, the functions
		\[
		t \mapsto J\left( \frac{u_0 + t\eta}{\|u_0 + t\eta\|_{L^2(\mathbb{S}^{N-1})}}, v_0 \right) \quad \text{and} \quad t \mapsto J\left( u_0, \frac{v_0 + t\eta}{\|v_0 + t\eta\|_{L^2(\mathbb{S}^{N-1})}} \right)
		\]
		are well-defined in a neighborhood of $0 \in \mathbb{R}$ and differentiable at $0$. Here,
		\[
		J(u, v) := \gamma \left( \int_{\mathbb{S}^{N-1}} |\nabla_{\theta}u|^2 + \sigma |u|^{p+1}|v|^{p+1} \right) + \gamma \left( \int_{\mathbb{S}^{N-1}} |\nabla_{\theta}v|^2 + \sigma |u|^{p+1}|v|^{p+1} \right),
		\]
		and $(u_0, v_0) \in H_1$ denotes a minimizer of $J|_{H_1}$. This condition is equivalent to $u_0, v_0 \in L^{p+1}(\mathbb{S}^{N-1})$, which follows from the Sobolev embedding $H^1(\mathbb{S}^{N-1}) \hookrightarrow L^{p+1}(\mathbb{S}^{N-1})$ provided that $p \leq p_S - 1$.
		\par 
		It remains to consider the case when $N\geq 4$ and $p>p_S-1$. In this case, the challenge arises because $u_0, v_0 \in L^{p+1}(\mathbb{S}^{N-1})$ are not known a priori, making the differential technique inapplicable. To overcome this, we aim to find a minimizer $(u,v)\in H_1$ of $J|_{H_1}$ with $u,v\in L^{\infty}(\mathbb{S}^{N-1})$ via a perturbation argument. Then the Lagrange multipliers rule can be applied and hence one can proceed as in the proof of \cite[Lemma 3.16]{Soave-Z2015ARMA} to obtain the desired result. To this aim, we consider the "truncated" functional defined by 
		\begin{align*}
			J_{m,s}(u,v):=
			&\gamma\left(\int_{\mathbb{S}^{N-1}}|\nabla_{\theta}u|^2+\sigma |\zeta_{m,s}(u)|^{p+1}|\zeta_{m,s}(v)|^{p+1}\right)\\
			&+\gamma\left(\int_{\mathbb{S}^{N-1}}|\nabla_{\theta}v|^2+\sigma |\zeta_{m,s}(u)|^{p+1}|\zeta_{m,s}(v)|^{p+1}\right)
		\end{align*}
		for $(u,v)\in (H^1(\mathbb{S}^{N-1}))^2$, where $m>0$, $0\leq s\leq 1$ and $\zeta_{m,s}\in C(\mathbb{R})$ is an odd function defined as 
		\begin{align}\label{def:zeta-ms}
			\zeta_{m,s}(t):=\left\{
			\begin{array}{ll}
				t,&\text{if}~~0\leq t\leq m,\\
				s/2+m,&\text{if}~~t\geq m+s,\\
				-\frac{1}{2s}(t-m-s)^2+s/2+m,&\text{if}~~s>0~\text{and}~m<t<m+s.
			\end{array}\right.
		\end{align} 
		Note that $\zeta_{m,s}\in C^1(\mathbb{R})$ if $s>0$ and $\|\zeta_{m,s}\|_{W^{1,\infty}(\mathbb{R})}\leq m+2$ for any $s\in [0,1]$.
		\par 
		First of all, for $0\leq s\leq 1\leq m$, we consider the following minimization problem
		$$M_{m,s}:=\inf_{(u,v)\in H_1}J_{m,s}(u,v)\geq 0.$$
		The direct method in Variational Calculus yields the existence of  a minimizer $(u_{m,s},v_{m,s})\in H_1$ of $J_{m,s}$ for $M_{m,s}$. Since $J_{m,s}(|u_{m,s}|,|v_{m,s}|)=J_{m,s}(u_{m,s},v_{m,s})$, we can asume that $u_{m,s},u_{m,s}\geq 0$ in $\mathbb{S}^{N-1}$. In addition, it is easy to see that there exists a constant $C>0$ independent of $\sigma,m,s$ such that 
		\begin{align}\label{eq:s-1-2}
			\begin{split}
				0\leq s_1:=\int_{\mathbb{S}^{N-1}}|\nabla_{\theta}u_{m,s}|^2+\sigma |\zeta_{m,s}(u_{m,s})|^{p+1}|\zeta_{m,s}(v_{m,s})|^{p+1}\leq C,\\
				0\leq s_2:=\int_{\mathbb{S}^{N-1}}|\nabla_{\theta}v_{m,s}|^2+\sigma |\zeta_{m,s}(u_{m,s})|^{p+1}|\zeta_{m,s}(v_{m,s})|^{p+1}\leq C.
			\end{split}
		\end{align}
		\par 
		Next, we assume that $0<s\leq 1\leq m$. Keeping $\zeta_{m,s}\in C^1(\mathbb{R})$ and $\|\zeta_{m,s}\|_{W^{1,\infty}(\mathbb{R})}\leq m+2$ in mind, we can apply the Lagrange multipliers rule to obain there exist $\lambda_{1,m,s},\lambda_{2,m,s}\in \mathbb{R}$ such that $(u_{m,s},v_{m,s})$ solves
		\begin{align}\label{Eq:GP-sphere-s}
			\left\{
			\begin{array}{l}
				-\Delta_{\theta}u=-\sigma\left[\frac{p+1}{2}+\frac{p+1}{2}\frac{\gamma'(s_2)}{\gamma'(s_1)}\right]|\zeta_{m,s}(u)|^{p-1}\zeta_{m,s}(u)\zeta_{m,s}'(u)|\zeta_{m,s}(v)|^{p+1}+\frac{\lambda_{1,m,s}}{\gamma'(s_1)}u,\\
				-\Delta_{\theta}v=-\sigma\left[\frac{p+1}{2}+\frac{p+1}{2}\frac{\gamma'(s_1)}{\gamma'(s_2)}\right]|\zeta_{m,s}(v)|^{p-1}\zeta_{m,s}(v)\zeta_{m,s}'(v)|\zeta_{m,s}(u)|^{p+1}+\frac{\lambda_{2,m,s}}{\gamma'(s_2)}v
			\end{array}\right.
			~\text{in}~\mathbb{S}^{N-1}
		\end{align}
		in the distribution sense, where $\Delta_{\theta}$ is the Laplace-Beltrami operator on the sphere $\mathbb{S}^{N-1}$.
		\par 
		We  claim that there exists a constant $C_1>0$ independent of $\sigma,m,s$ such that 
		\begin{align}
			0\leq \frac{\lambda_{1,m,s}}{\gamma'(s_1)}\leq C_1,\quad 	0\leq \frac{\lambda_{2,m,s}}{\gamma'(s_2)}\leq C_1.
		\end{align}
		\par 
		Indeed, by testing the equation for $u_{m,s}$ against $u_{m,s}$ in $\mathbb{S}^{N-1}$, we obtain
		\begin{align*}
			&\frac{\lambda_{1,m,s}}{\gamma'(s_1)}
			=\int_{ \mathbb{S}^{N-1}}|\nabla_{\theta}u_{m,s}|^2\\
			&+\sigma\left[\frac{p+1}{2}+\frac{p+1}{2}\frac{\gamma'(s_2)}{\gamma'(s_1)}\right]\int_{ \mathbb{S}^{N-1}}|\zeta_{m,s}(u_{m,s})|^{p-1}\zeta_{m,s}(u_{m,s})\zeta_{m,s}'(u_{m,s})|\zeta_{m,s}(v_{m,s})|^{p+1}u_{m,s}.
		\end{align*}
		By the definition of $\zeta_{m,s}$ and note that $s\leq m$, we have $\zeta_{m,s}(t)\geq \frac{1}{2}t$ for any $t\in [0,m+s]$. Combining this with $u_{m,s}\geq 0$ and $\zeta_{m,s}'(t)=0$ for $t\geq m+s$, we then deduce that 
		\begin{align*}
			0\leq& \int_{ \mathbb{S}^{N-1}}|\zeta_{m,s}(u_{m,s})|^{p-1}\zeta_{m,s}(u_{m,s})\zeta_{m,s}'(u_{m,s})|\zeta_{m,s}(v_{m,s})|^{p+1}u_{m,s}\\
			&\leq 2\int_{ \mathbb{S}^{N-1}}|\zeta_{m,s}(u_{m,s})|^{p+1}|\zeta_{m,s}(v_{m,s})|^{p+1}.
		\end{align*}
		This together with \eqref{eq:s-1-2} yields the desired estimate for  $\frac{\lambda_{1,m,s}}{\gamma'(s_1)}$. The estimate for $\frac{\lambda_{2,m,s}}{\gamma'(s_2)}$ is analogous.
		\par 
		With the claim in hand, we deduce from \eqref{Eq:GP-sphere-s} that $(u_{m,s},v_{m,s})\in H_1$ is a non-negative weak solution to 
		$$-\Delta_{\theta}u_{m,s}\leq C_1u_{m,s},\quad -\Delta_{\theta}v_{m,s}\leq C_1v_{m,s}\quad \text{in}~~\mathbb{S}^{N-1}.$$
		This allows us to deduce 
		\begin{align}\label{eq:u-ms-v-ms-bdd}
			0\leq u_{m,s}(x)\leq C_2,\quad 0\leq v_{m,s}(x)\leq C_2\quad \text{for}~~x\in \mathbb{S}^{N-1}
		\end{align}
		via Moser iteration, where $C_2>0$ is a constant independent of $\sigma,m,s$.
		\par 
		Finally, we fix $m>C_2+1$ and choose a sequence $\{s_n\}\subseteq (0,1)$ with $\lim\limits_{n\to\infty}s_n=0$. By the Sobolev compact embedding theorem, there exsits $(u_m,v_m)\in H_1$ such that up to a subsequence, 
		$$(u_{m,s_n},v_{m,s_n})\to (u_m,v_m)\quad \text{$\mathcal{H}^{N-1}$-a.e. in}~~\mathbb{S}^{N-1}\quad \text{as}~~n\to\infty.$$ 
		Then by the definition of $\zeta_{m,s}$ and the Lebesgue dominated convergence theorem, we deduce that
		\begin{align*}
			M:=\inf_{(u,v)\in H_1}J(u,v)\geq& \inf_{(u,v)\in H_1}J_{m,0}(u,v)=J_{m,0}(u_{m,0},v_{m,0})=\lim_{n\to\infty}J_{m,s_n}(u_{m,0},v_{m,0})\\
			\geq &\lim_{n\to\infty}J_{m,s_n}(u_{m,s_n},v_{m,s_n})=\lim_{n\to\infty}J(u_{m,s_n},v_{m,s_n})\geq J(u_m,v_m)\geq M.
		\end{align*}
		Hence, $(u_m,v_m)\in H_1$ is a minimizer of $J|_{H_1}$ satisfying $u_m,v_m\in L^{\infty}(\mathbb{S}^{N-1})$ (by \eqref{eq:u-ms-v-ms-bdd}), as desired.
	\end{proof}
	\par 
	Since system \eqref{Sys:main-interior-1} is non-variational,  Lemma \ref{lem:poincare-sphere} can not be used directly. 
	Therefore, we need to extend the above Poincar\'{e}-type lemma to the non-variational case. At this stage, difficulties arise due to the non-variational structure and the lack of regularity of the corresponding functional when  some of $p_i$ ($1\leq i\leq k$) is equal to one. Notably, the variational structure and the regularity of the corresponding functional are crucially relied upon in the proof of Lemma \ref{lem:poincare-sphere}. To overcome these challenges, our approach remains to employ a perturbation argument to construct a non-negative minimizer with a universal bound in $(L^{\infty}(\mathbb{S}^{N-1}))^2$. Then the desired result can be deduced via Lemma \ref{lem:poincare-sphere}.
	\begin{lemma}\label{lem:poincare-sphere-LV}
		Let $N\geq 3$, $p_i,q_i\geq 1$ for $i=1,2$. Then there exists a cosntant $C=C(N,p_1,p_2,q_1,q_2)>0$ such that if
		$$\sigma>0,\quad 0\leq \varepsilon\leq \left(\frac{N-2}{2}\right)^2,$$
		then
		\begin{align*}
			&\inf_{(u,v)\in H_1}\gamma\left(\int_{\mathbb{S}^{N-1}}|\nabla_{\theta}u|^2+\sigma |u|^{p_1}|v|^{q_1}-\varepsilon u^2\right)+\gamma\left(\int_{\mathbb{S}^{N-1}}|\nabla_{\theta}v|^2+\sigma |u|^{p_2}|v|^{q_2}-\varepsilon v^2\right)\\
			\geq &2-C(\varepsilon+\sigma^{-1/(2p+2)}),
		\end{align*}
		where $p:=\max\{1,\max\{p_1,q_1,p_2,q_2\}-1\}\geq 1$ and the space $H_1$ is defined in \eqref{def:H1-space}.
	\end{lemma}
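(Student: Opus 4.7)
The plan is to mirror the proof of Lemma \ref{lem:poincare-sphere}, adapting the truncation and Moser iteration to the asymmetric interaction, and then to reduce the conclusion to Lemma \ref{lem:poincare-sphere} itself via the resulting $L^{\infty}$ bound on the minimizer. The reduction to $\varepsilon=0$ is identical to the one at the beginning of the proof of Lemma \ref{lem:poincare-sphere}. For $0\leq s\leq 1\leq m$, I consider the truncated functional
\begin{align*}
J_{m,s}(u,v):=\gamma\!\left(\int_{\mathbb{S}^{N-1}}|\nabla_{\theta}u|^2+\sigma|\zeta_{m,s}(u)|^{p_1}|\zeta_{m,s}(v)|^{q_1}\right)+\gamma\!\left(\int_{\mathbb{S}^{N-1}}|\nabla_{\theta}v|^2+\sigma|\zeta_{m,s}(u)|^{p_2}|\zeta_{m,s}(v)|^{q_2}\right)
\end{align*}
on $H_1$. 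The direct method produces a non-negative minimizer $(u_{m,s},v_{m,s})\in H_1$; denoting by $s_1,s_2$ the corresponding arguments of the two $\gamma$-summands (the analogues of \eqref{eq:s-1-2}), one has $0\leq s_1,s_2\leq C$ for a constant $C$ independent of $\sigma,m,s$, obtained by comparing with any fixed test pair in $H_1$.

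For $s>0$ the Lagrange multiplier rule applies, since $\zeta_{m,s}\in C^1(\mathbb{R})$, and differentiating each $\gamma$-summand with respect to $u$ produces the Euler-Lagrange equation
\begin{align*}
-\Delta_{\theta}u_{m,s}+\frac{\sigma p_1}{2}|\zeta(u_{m,s})|^{p_1-1}\zeta'(u_{m,s})|\zeta(v_{m,s})|^{q_1}+\frac{\sigma p_2}{2}\frac{\gamma'(s_2)}{\gamma'(s_1)}|\zeta(u_{m,s})|^{p_2-1}\zeta'(u_{m,s})|\zeta(v_{m,s})|^{q_2}=\frac{\lambda_{1,m,s}}{\gamma'(s_1)}\,u_{m,s},
\end{align*}
and an analogous equation for $v_{m,s}$ (here $\zeta=\zeta_{m,s}$). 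Since $\zeta,\zeta'\geq 0$ on $[0,\infty)$ and $u_{m,s}\geq 0$, the two nonlinear terms have the good sign, and hence $-\Delta_{\theta}u_{m,s}\leq (\lambda_{1,m,s}/\gamma'(s_1))\,u_{m,s}$. Testing this EL equation against $u_{m,s}$ and exploiting the elementary inequality $t\,\zeta(t)^{p-1}\zeta'(t)\leq 2\zeta(t)^{p}$ for $t\geq 0$ (a direct consequence of \eqref{def:zeta-ms}), together with the uniform upper bound on $s_1,s_2$ and the fact that $\gamma'$ is decreasing, yields a universal bound $\lambda_{1,m,s}/\gamma'(s_1)\leq C$; the factor $\gamma'(s_2)/\gamma'(s_1)$ is controlled purely by the upper bound on $s_1$, since $\gamma'$ is monotone decreasing and bounded. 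Moser iteration on $\mathbb{S}^{N-1}$ then delivers $\|u_{m,s}\|_{\infty},\|v_{m,s}\|_{\infty}\leq C_2$, with $C_2$ depending only on $N,p_1,q_1,p_2,q_2$.

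Now fix $m>C_2+1$ and send $s\to 0^{+}$ along a subsequence. By the Sobolev compact embedding $H^1(\mathbb{S}^{N-1})\hookrightarrow L^2(\mathbb{S}^{N-1})$, a subsequence of $\{(u_{m,s},v_{m,s})\}$ converges a.e.\ to some $(u_{m},v_{m})\in H_1$ with $\|u_m\|_{\infty},\|v_m\|_{\infty}\leq C_2<m$, so the truncation becomes trivial in the limit. The same lower-semicontinuity chain as at the end of the proof of Lemma \ref{lem:poincare-sphere} then shows that $(u_m,v_m)$ is a minimizer of the untruncated functional $J$ on $H_1$, bounded in $L^{\infty}$ by $C_2$. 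Since $p+1\geq \max\{p_1,q_1,p_2,q_2\}$ by the definition of $p$, for $i=1,2$ we have
\begin{align*}
\sigma|u_m|^{p_i}|v_m|^{q_i}\geq \sigma\,C_2^{-(2p+2-p_i-q_i)}\,|u_m|^{p+1}|v_m|^{p+1}.
\end{align*}
By monotonicity of $\gamma$, this bounds $J(u_m,v_m)$ below by the symmetric variational expression of Lemma \ref{lem:poincare-sphere} with $\sigma$ replaced by $\sigma':=\sigma\,\min_{i}C_2^{-(2p+2-p_i-q_i)}$, and Lemma \ref{lem:poincare-sphere} then yields
\begin{align*}
\inf_{H_1}J\;=\;J(u_m,v_m)\;\geq\; 2-C(\sigma')^{-1/(2p+2)}\;\geq\; 2-C'\sigma^{-1/(2p+2)},
\end{align*}
with $C'=C'(N,p_1,q_1,p_2,q_2)$.

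The main obstacle is to secure the uniform bound on $\lambda_{1,m,s}/\gamma'(s_1)$ in the presence of the asymmetric exponents and the ratio $\gamma'(s_2)/\gamma'(s_1)$; however, since this ratio is controlled purely by the upper bound on $s_1$ via monotonicity of $\gamma'$, the asymmetry only inflates a universal constant and does not affect the overall scheme of Lemma \ref{lem:poincare-sphere}. The rest is a cosmetic adaptation of that argument, together with the final comparison via the $L^{\infty}$ bound.
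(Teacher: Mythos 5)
Your overall scheme — truncate with $\zeta_{m,s}$, apply the Lagrange multiplier rule, bound the multipliers, run Moser iteration to get a uniform $L^{\infty}$ bound, pass to the limit $s\to 0^+$, and finally reduce to Lemma~\ref{lem:poincare-sphere} via the inequality $|u|^{p_i}|v|^{q_i}\geq C_2^{-(2p+2-p_i-q_i)}|u|^{p+1}|v|^{p+1}$ — is exactly the skeleton of the paper's proof, and the multiplier estimate and the final reduction are handled correctly.

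However, there is a genuine gap at the Lagrange-multiplier step. You assert that ``for $s>0$ the Lagrange multiplier rule applies, since $\zeta_{m,s}\in C^1(\mathbb{R})$.'' Smoothness of $\zeta_{m,s}$ is not sufficient: the map $t\mapsto |\zeta_{m,s}(t)|^{p_1}$ must also be $C^1$, and when $p_1=1$ it equals $|\zeta_{m,s}(t)|=|t|$ near $t=0$, which is not differentiable at $0$. Consequently, if $p_1=1$ (or $q_1=1$, etc.) and the minimizer $u_{m,s}$ vanishes on a set of positive measure — which is exactly the regime of interest when $\sigma$ is large — then $t\mapsto J_{m,s}(u_{m,s}+t\eta,v_{m,s})$ is not differentiable at $t=0$, since the contribution from $\{u_{m,s}=0,\ \eta\neq 0\}$ behaves like $|t|\int|\eta|$. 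The Euler--Lagrange equation you write down is therefore not available in general; this is precisely the ``lack of regularity of the functional when some of the exponents equals one'' that the two-species symmetric functional in Lemma~\ref{lem:poincare-sphere} does not suffer from (there the exponents are $p+1\geq 2$, so $|u|^{p+1}$ is $C^1$). The paper closes this gap by introducing a third perturbation parameter $\varepsilon\in(0,1]$ and working with the functional $\widetilde{J}_{m,s,\varepsilon}$ whose exponents are $p_i+\varepsilon$ and $q_i+\varepsilon$, all strictly greater than $1$; this makes the functional Gateaux differentiable, the multiplier rule and Moser iteration apply with bounds uniform in $(\sigma,m,s,\varepsilon)$, and one then sends $s,\varepsilon\to 0$ along the same sequence. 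Your argument works verbatim only under the extra hypothesis $\min\{p_1,q_1,p_2,q_2\}>1$, and the ``cosmetic adaptation'' claim undersells the one new idea the lemma actually requires.
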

	\begin{proof}
		As before, it suffices to consider the problem for $\varepsilon=0$.
		Let 
		$$\widetilde{J}(u,v):=\gamma\left(\int_{\mathbb{S}^{N-1}}|\nabla_{\theta}u|^2+\sigma |u|^{p_1}|v|^{q_1} \right)+\gamma\left(\int_{\mathbb{S}^{N-1}}|\nabla_{\theta}v|^2+\sigma |u|^{p_2}|v|^{q_2}\right)$$
		for $(u,v)\in (H^1(\mathbb{S}^{N-1}))^2$. We aim to find a minimizer of $\widetilde{J}|_{H_1}$ which has a uniform bound in $(L^{\infty}(\mathbb{S}^{N-1}))^2$ with respect to $\sigma>0$. The proof idea is similar to that of the proof of Lemma \ref{lem:poincare-sphere}. The difference is that here some of the exponents (i.e., $p_i,q_i$) may equal one, causing the lack of regularity of the functional $\widetilde{J}$. Therefore, we consider the following three-parameter functional defined by 
		\begin{align*}
			\widetilde{J}_{m,s,\varepsilon}(u,v)
			:=&\gamma\left(\int_{\mathbb{S}^{N-1}}|\nabla_{\theta}u|^2+\sigma |\zeta_{m,s}(u)|^{p_1+\varepsilon}|\zeta_{m,s}(v)|^{q_1+\varepsilon} \right)\\
			&+\gamma\left(\int_{\mathbb{S}^{N-1}}|\nabla_{\theta}v|^2+\sigma |\zeta_{m,s}(u)|^{p_2+\varepsilon}|\zeta_{m,s}(v)|^{q_2+\varepsilon}\right),
		\end{align*}
		where $m\geq 1$, $s,\varepsilon\in [0,1]$ and $\zeta_{m,s}\in C(\mathbb{R})$ is the odd function defined in \eqref{def:zeta-ms}. 
		\par 
		We divide the remaining proof into four steps.
		\par 
		\textbf{Step 1.} For $m\geq 1$ and $s,\varepsilon\in [0,1]$,  we consider the following minimization problem
		$$\widetilde{M}_{m,s,\varepsilon}:=\inf_{(u,v)\in H_1}\widetilde{J}_{m,s,\varepsilon}(u,v)\geq 0.$$
		By the direct method and the fact that $\widetilde{J}_{m,s,\varepsilon}(|u|,|v|)=\widetilde{J}_{m,s,\varepsilon}(u,v)$,  one can find a non-negative minimizer $(u_{m,s,\varepsilon},v_{m,s,\varepsilon})\in H_1$ of $\widetilde{J}_{m,s,\varepsilon}|_{H_1}$ for $\widetilde{M}_{m,s,\varepsilon}$. Moreover, there exists a constant $C>0$ independent of $\sigma,m,s,\varepsilon$ such that 
		\begin{align}\label{eq:tilde-s-1-2}
			\begin{split}
				0\leq \tilde{s}_1:=\int_{\mathbb{S}^{N-1}}|\nabla_{\theta}u_{m,s,\varepsilon}|^2+\sigma |\zeta_{m,s}(u_{m,s,\varepsilon})|^{p_1+\varepsilon}|\zeta_{m,s}(v_{m,s,\varepsilon})|^{q_1+\varepsilon}\leq C,\\
				0\leq \tilde{s}_2:=\int_{\mathbb{S}^{N-1}}|\nabla_{\theta}v_{m,s,\varepsilon}|^2+\sigma |\zeta_{m,s}(u_{m,s,\varepsilon})|^{p_2+\varepsilon}|\zeta_{m,s}(v_{m,s,\varepsilon})|^{q_2+\varepsilon}\leq C.
			\end{split}
		\end{align}
		\par 
		\textbf{Step 2.} Assume that $m\geq 1$ and $0<s,\varepsilon<1$. By the Lagrange multipliers rule, there exist $\lambda_{1,m,s,\varepsilon},\lambda_{2,m,s,\varepsilon}\in \mathbb{R}$ such that $(u_{m,s,\varepsilon},v_{m,s,\varepsilon})$ solves
		\begin{align}\label{Eq:LV-sphere-s}
			\left\{
			\begin{array}{l}
				-\Delta_{\theta}u=-\sigma h_{1,m,s,\varepsilon}(u,v)+\frac{\lambda_{1,m,s,\varepsilon}}{\gamma'(\tilde{s}_1)}u,\\
				-\Delta_{\theta}v=-\sigma h_{2,m,s,\varepsilon}(u,v)+\frac{\lambda_{2,m,s,\varepsilon}}{\gamma'(\tilde{s}_2)}v
			\end{array}\right.
			\quad\text{in}~~\mathbb{S}^{N-1}
		\end{align}
		in the distribution sense, where 
		\begin{align*}
			\begin{split}
				h_{1,m,s,\varepsilon}(u,v):=&\frac{p_1+\varepsilon}{2}|\zeta_{m,s}(u)|^{p_1+\varepsilon-2}\zeta_{m,s}(u)\zeta_{m,s}'(u)|\zeta_{m,s}(v)|^{q_1+\varepsilon}\\
				&+\frac{p_2+\varepsilon}{2}\frac{\gamma'(\tilde{s}_2)}{\gamma'(\tilde{s}_1)}|\zeta_{m,s}(u)|^{p_2+\varepsilon-2}\zeta_{m,s}(u)\zeta_{m,s}'(u)|\zeta_{m,s}(v)|^{q_2+\varepsilon},\\
				h_{2,m,s,\varepsilon}(u,v):=&\frac{q_2+\varepsilon}{2}|\zeta_{m,s}(v)|^{q_2+\varepsilon-2}\zeta_{m,s}(v)\zeta_{m,s}'(v)|\zeta_{m,s}(u)|^{p_2+\varepsilon}\\
				&+\frac{q_1+\varepsilon}{2}\frac{\gamma'(\tilde{s}_1)}{\gamma'(\tilde{s}_2)}|\zeta_{m,s}(v)|^{q_1+\varepsilon-2}\zeta_{m,s}(v)\zeta_{m,s}'(v)|\zeta_{m,s}(u)|^{p_1+\varepsilon}.
			\end{split}
		\end{align*}
		As in the proof of Lemma \ref{lem:poincare-sphere}, we test the first and second equations of \eqref{Eq:LV-sphere-s} against $u_{m,s,\varepsilon}$ and $v_{m,s,\varepsilon}$ in $\mathbb{S}^{N-1}$, respectively, and then use \eqref{eq:tilde-s-1-2} together with the properties of $\zeta_{m,s}$ to deduce that 
		\begin{align*}
			0\leq \frac{\lambda_{1,m,s,\varepsilon}}{\gamma'(\tilde{s}_1)}\leq C_1,\quad 	0\leq \frac{\lambda_{2,m,s,\varepsilon}}{\gamma'(\tilde{s}_2)}\leq C_1,
		\end{align*}
		where $C_1>0$ is a constant independent of $\sigma,m,s,\varepsilon$. 
		Combining this with \eqref{Eq:LV-sphere-s}, we can apply Moser iteration to obtain a constant $C_2>0$ independent of $\sigma,m,s,\varepsilon$ such that 
		\begin{align}\label{eq:bdd-uv-msvarepsilon}
			\|u_{m,s,\varepsilon}\|_{L^{\infty}(\mathbb{S}^{N-1})}\leq C_2,\quad 	\|v_{m,s,\varepsilon}\|_{L^{\infty}(\mathbb{S}^{N-1})}\leq C_2.
		\end{align}
		\par 
		\textbf{Step 3.} We fix $m>C_2+1$ and choose a sequence $\{s_n\}\subseteq (0,1)$ with $\lim\limits_{n\to\infty}s_n=0$. The Sobolev compact embedding theorem yields that there exsits $(\hat{u},\hat{v})\in H_1$ such that up to a subsequence, 
		$$(u_{m,s_n,s_n},v_{m,s_n,s_n})\to (\hat{u},\hat{v})\quad \text{$\mathcal{H}^{N-1}$-a.e. in}~~\mathbb{S}^{N-1}\quad \text{as}~~n\to\infty.$$ 
		Then by the definition of $\zeta_{m,s}$ and the Lebesgue dominated convergence theorem, we deduce that 
		\begin{align*}
			\widetilde{M}:=&\inf_{(u,v)\in H_1}\widetilde{J}(u,v)\geq \inf_{(u,v)\in H_1}\widetilde{J}_{m,0,0}(u,v)\\
			=&\widetilde{J}_{m,0,0}(u_{m,0,0},v_{m,0,0})=\lim_{n\to\infty}\widetilde{J}_{m,s_n,s_n}(u_{m,0,0},v_{m,0,0})\\
			\geq &\limsup_{n\to\infty}\widetilde{J}_{m,s_n,s_n}(u_{m,s_n,s_n},v_{m,s_n,s_n})=\limsup_{n\to\infty}\widetilde{J}_{m,s_n,0}(u_{m,s_n,s_n},v_{m,s_n,s_n})\\
			=&\limsup_{n\to\infty}\widetilde{J}(u_{m,s_n,s_n},v_{m,s_n,s_n})\geq \widetilde{J}(\hat{u},\hat{v})\geq \widetilde{M}.
		\end{align*}
		Here we have used the fact that the function $g_i(t_1,t_2,\varepsilon):=|t_1|^{p_i+\varepsilon}|t_2|^{q_i+\varepsilon}$ is uniformly continuous in $[0,C_2]\times[0,C_2]\times [0,1]$ for $i=1,2$.
		Hence, $(\hat{u},\hat{v})\in H_1$ is a minimizer of $\widetilde{J}|_{H_1}$. Moreover, it follows from \eqref{eq:bdd-uv-msvarepsilon} that 
		\begin{align}\label{eq:bdd-um-vm}
			\|\hat{u}\|_{L^{\infty}(\mathbb{S}^{N-1})}\leq C_2,\quad 	\|\hat{v}\|_{L^{\infty}(\mathbb{S}^{N-1})}\leq C_2.
		\end{align}
		\par 
		\textbf{Step 4. } By  \eqref{eq:bdd-um-vm} and Lemma \ref{lem:poincare-sphere}, we then obtain
		\begin{align*}
			\inf_{(u,v)\in H_1}\widetilde{J}(u,v)=\widetilde{J}(\hat{u},\hat{v})
			=& \gamma\left(\int_{\mathbb{S}^{N-1}}|\nabla_{\theta}\hat{u}|^2+\sigma |\hat{u}|^{p_1}|\hat{v}|^{q_1} \right)+\gamma\left(\int_{\mathbb{S}^{N-1}}|\nabla_{\theta}\hat{v}|^2+\sigma |\hat{u}|^{p_2}|\hat{v}|^{q_2}\right)\\
			\geq& \gamma\left(\int_{\mathbb{S}^{N-1}}|\nabla_{\theta}\hat{u}|^2+\sigma (C_2)^{p_1+q_1-2(p+1)}|\hat{u}|^{p+1}|\hat{v}|^{p+1} \right)\\
			&+\gamma\left(\int_{\mathbb{S}^{N-1}}|\nabla_{\theta}\hat{v}|^2+\sigma (C_2)^{p_2+q_2-2(p+1)}|\hat{u}|^{p+1}|\hat{v}|^{p+1}\right)\\
			\geq& 2-C_4[\sigma C_3]^{-1/(2p+2)}=2-C_5\sigma^{-1/(2p+2)},
		\end{align*}
		where $p=\max\{1,\max\{p_1,q_1,p_2,q_2\}-1\}\geq 1$ and $C_3,C_4,C_5>0$ are constants independent of $\sigma$. The proof is finished.
	\end{proof}
	\par 
	Now with Lemma \ref{lem:poincare-sphere-LV} in hand, we are able to prove Theorem \ref{thm:ACF-formula}. 
	\begin{proof}[Proof of Theorem \ref{thm:ACF-formula}]
		For simplicity, we assume that $x_0=0$ and rewrite $J_i(\bm{u},x_0,r)$ and $\Lambda_i(\bm{u},x_0,r)$ as $J_i(r)$ and $\Lambda_i(r)$ respectively for $i=1,2$. By (h2) and Lemma \ref{lem:est:J-i(r)} (3), we deduce that for $r\in (2,R)$,
		\begin{align*}
			\frac{d}{dr}\ln \left(\frac{J_1(r)J_2(r)}{r^4}\right)
			=&-\frac{4}{r}+\frac{\frac{d}{dr}J_1(r)}{J_1(r)}+\frac{\frac{d}{dr}J_2(r)}{J_2(r)}\\
			=&-\frac{4}{r}+\frac{\int_{\partial B_r}\left[|\nabla u_1|^2+ u_1^{p_1+1}\sum_{j\neq 1}^ka_{1j}M_{1j}u_j^{p_{j}}-d'u_1\right]|x|^{2-N}}{\int_{B_r}\left[|\nabla u_1|^2+u_1^{p_1+1}\sum_{j\neq 1}^ka_{1j}M_{1j}u_j^{p_{j}}-d'u_1\right]|x|^{2-N}}\\
			&+\frac{\int_{\partial B_r}\left[|\nabla u_2|^2+u_2^{p_2+1}\sum_{j\neq 2}^ka_{2j}M_{2j}u_j^{p_{j}}-d'u_2\right]|x|^{2-N}}{\int_{B_r}\left[|\nabla u_2|^2+ u_2^{p_2+1}\sum_{j\neq 2}^ka_{2j}M_{2j}u_j^{p_{j}}-d'u_2\right]|x|^{2-N}}\\
			\geq &\frac{2}{r}\left[\gamma(\Lambda_1(r))+\gamma(\Lambda_2(r))-2\right]\\
			\geq& -\frac{4}{r}+\frac{2}{r}\gamma\left(\frac{r^2\int_{\partial B_r}|\nabla_{\theta} u_1|^2+M_{12}\kappa u_1^{p_1+1}u_2^{p_{2}}-d'u_1}{\int_{\partial B_r}u_1^2}\right)\\
			&+\frac{2}{r}\gamma\left(\frac{r^2\int_{\partial B_r}|\nabla_{\theta} u_2|^2+M_{12}\kappa u_2^{p_2+1}u_1^{p_{1}}-d'u_2}{\int_{\partial B_r}u_2^2}\right).
		\end{align*}
		By H\"{o}lder's inequality, we have 
		$$\frac{r^2\int_{\partial B_r}d'u_i}{\int_{\partial B_r}u_i^2}\leq d'r^2\sigma_{N-1}^{1/2}H_i(\bm{u},0,r)^{-1/2},\quad i=1,2.$$
		On the other hand, for $i=1,2$,
		let
		$$u_{i,r}(x):=\frac{u_i(rx)}{\left(\frac{1}{r^{N-1}}\int_{\partial B_r}u_i^2\right)^{1/2}},\quad \forall x\in \mathbb{S}^{N-1}.$$
		Then by (h1), we obtain
		\begin{align*}
			&\frac{r^2\int_{\partial B_r}|\nabla_{\theta} u_1|^2+M_{12}\kappa u_1^{p_1+1}u_2^{p_{2}}}{\int_{\partial B_r}u_1^2}\\
			=&\int_{\mathbb{S}^{N-1}}|\nabla_{\theta} u_{1,r}|^2+r^2\left(\frac{1}{r^{N-1}}\int_{\partial B_r}u_1^2\right)^{(p_1+1)/2-1}\left(\frac{1}{r^{N-1}}\int_{\partial B_r}u_2^2\right)^{p_{2}/2}M_{12} \kappa u_{1,r}^{p_1+1}u_{2,r}^{p_{2}}\\
			\geq& \int_{\mathbb{S}^{N-1}}|\nabla_{\theta} u_{1,r}|^2+r^2\mu^{(p_1+p_{2}-1)/2}M_{12} \kappa u_{1,r}^{p_1+1}u_{2,r}^{p_{2}}.
		\end{align*}
		Similarly,
		\begin{align*}
			\frac{r^2\int_{\partial B_r}|\nabla_{\theta} u_2|^2+M_{12}\kappa u_2^{p_2+1}u_1^{p_{1}}}{\int_{\partial B_r}u_2^2}
			\geq  \int_{\mathbb{S}^{N-1}}|\nabla_{\theta} u_{2,r}|^2+r^2\mu^{(p_2+p_{1}-1)/2}M_{12} \kappa u_{1,r}^{p_1}u_{2,r}^{p_{2}+1}.
		\end{align*}
		Hence, by (h3), we can apply Lemma \ref{lem:poincare-sphere-LV} with
		$$\sigma=r^2\mu^{(p_1+p_{2}-1)/2}M_{12} \kappa>0\quad \text{and}\quad \varepsilon=d'r^2\sigma_{N-1}^{1/2}\left[\min_{i\in \{1,2\}}H_i(\bm{u},0,r)\right]^{-1/2}>0$$ 
		to deduce that there exists $C_1=C_1(N,p_1,p_2)>0$ such that
		\begin{align*}
			\frac{d}{dr}\ln \left(\frac{J_1(r)J_2(r)}{r^4}\right)
			\geq& -\frac{4}{r}+\frac{2}{r}\gamma\left(\int_{\mathbb{S}^{N-1}}|\nabla_{\theta} u_{1,r}|^2+\sigma u_{1,r}^{p_1+1}u_{2,r}^{p_{2}}d\mathcal{H}^{N-1}-\varepsilon\right)\\
			&+\frac{2}{r}\gamma\left(\int_{\mathbb{S}^{N-1}}|\nabla_{\theta} u_{2,r}|^2+\sigma u_{1,r}^{p_1}u_{2,r}^{p_{2}+1}d\mathcal{H}^{N-1}-\varepsilon\right)\\
			&\geq -\frac{C_1}{r}(\varepsilon+\sigma^{-1/(2p+2)})\\
			=& -CM_{12}^{-1/(2p+2)}r^{-(p+2)/(p+1)}-Cd'rH(r)^{-1/2},
		\end{align*}
		where $C=C(\kappa,\mu,N,p_1,p_{2})>0$ is a constant, $p=\max\{p_1,p_2\}\geq 1$ and $H(r)=\min_{i\in \{1,2\}}H_i(\bm{u},0,r)$. By integrating, the desired result follows.
	\end{proof}
	
	\section{Uniform interior Lipschitz bounds}\label{sec:4}
	\setcounter{section}{4}
	\setcounter{equation}{0}
	
	This section is devoted to establishing uniform interior Lipschitz bounds for solutions to system \eqref{Sys:main-interior}; specifically, we prove Theorem \ref{thm:interior-lip} herein. The proof relies on blow-up analysis and an Alt-Caffarelli-Friedman type monotonicity formula (see Theorem \ref{thm:ACF-formula}, established in Section \ref{sec:3}). In particular, we follow the proof strategy developed in \cite{Zhang-Zhang2025} to prove Theorem \ref{thm:interior-lip}. As noted earlier, with the aid of Theorem \ref{thm:ACF-formula}, the remaining challenge we need to address stems from the nonhomogeneity of the competition-reaction function $\bm{\hat{g}}$ defined in \eqref{def:competition-reaction}. To overcome this challenge, we employ an induction argument on the index $k$.
	\par 
	Following \cite{Dias-T2023na, Soave-Z2015ARMA, Zhang-Zhang2025}, we may assume without loss of generality that $N \geq 3$, as the case $N \leq 2$ can be handled by extending the solutions to spatial ones.
	\par
	Let us consider a sequence of weak solutions $\{\bm{u}_n\}$ to system \eqref{Sys:main-interior-general} that satisfy the assumptions of Theorem \ref{thm:interior-lip}. Without loss of generality, we assume that $B_3 \subseteq \Omega$ and seek to prove the uniform Lipschitz bound in $B_1$.
	As in \cite{Zhang-Zhang2025}, we select a radially non-increasing cut-off function $\eta \in C_c^1(\mathbb{R}^N)$ such that
	$$\eta\equiv 1~~\text{in}~~B_1,\quad \eta(x)=(2-|x|)^2~~ \text{for}~~x\in B_2\setminus B_{3/2},\quad \eta\equiv 0~~\text{in}~~\mathbb{R}^N\setminus B_2,$$
	which possesses the following property:
	\begin{align}\label{property:eta}
		\sup_{x\in B_2}\sup_{0<\rho<\frac{1}{2}\text{dist}(x,\partial B_2)}\frac{\sup_{B_{\rho}(x)}\eta}{\inf_{B_{\rho}(x)}\eta}\leq 64.
	\end{align}
	By definition, it then suffices to derive a uniform bound on the gradients of $\{\eta \bm{u}_n\}$; specifically, we aim to prove that there exists a constant $C > 0$ (independent of $n$) such that
	\begin{align}\label{bdd:interior}
		\max_{1 \leq i \leq k} \max_{x \in \overline{B_2}} \left| \nabla \left( \eta u_{i,n} \right) (x) \right| \leq C.
	\end{align}
	If $\{\beta_{ij;n}\}$ is bounded for all $i \neq j$, then the desired bound \eqref{bdd:interior} follows from the regularity theory for elliptic equations (see, e.g., \cite[Theorem 8.32]{Gilbarg-Trudinger2001-book}). Hence, we only need to consider the case where $\beta_{ij;n} \to +\infty$ as $n \to \infty$ for some $i \neq j$.
	\par
	Assume by contradiction that \eqref{bdd:interior} fails to hold. Then there exists a subsequence of $\{\bm{u}_n\}$ (still denoted by $\{\bm{u}_n\}$) such that
	\begin{align}\label{def:L-n}
		L_n := \max_{1 \leq i \leq k} \max_{x \in \overline{B_2}} \left| \nabla \left( \eta u_{i,n} \right) (x) \right| \to +\infty \quad \text{as } n \to \infty.
	\end{align}
	Since $\nabla(\eta u_{i,n}) \equiv 0$ in $B_3 \setminus B_2$, we may assume, up to a relabelling of indices, that the maximum in \eqref{def:L-n} is achieved for $i = 1$ and at a point $x_n \in B_2$. We introduce two blow-up sequences as follows:
	\begin{align}\label{def:blow-up-seq}
		v_{i,n}(x) := \eta(x_n) \cdot \frac{u_{i,n}(x_n + r_n x)}{L_n r_n}, \quad \bar{v}_{i,n}(x) := \frac{(\eta u_{i,n})(x_n + r_n x)}{L_n r_n}, \quad 1 \leq i \leq k,
	\end{align}
	which are defined on the scaled domain $\Omega_n := (B_3 - x_n)/r_n \subseteq (\Omega - x_n)/r_n$. Here, $r_n > 0$ is defined by
	\begin{align*}
		r_n := \sum_{i=1}^k \frac{(\eta u_{i,n})(x_n)}{L_n} \to 0 \quad \text{as } n \to \infty.
	\end{align*}
	This limit $r_n \to 0$ follows from \eqref{def:L-n} and the fact that $\{\bm{u}_{n}\}$ is bounded in $L^{\infty}(\Omega; \mathbb{R}^k)$. We also denote the scaled nonlinear term as
	\begin{align*}
		\hat{f}_{i,n}(x) := r_n \cdot \frac{\eta(x_n)}{L_n} f_{i,n}\left( x_n + r_n x, u_{i,n}(x_n + r_n x) \right), \quad \forall x \in \Omega_n, ~\forall 1 \leq i \leq k.
	\end{align*}
	
	\subsection{Asymptotics of the blow-up sequence}
	
	First of all, we derive some preliminary properties of $\{\bm{v}_n\}$ and $\{\bm{\hat{f}}_{n}\}$.
	\begin{lemma}\label{lem:pre-property-v-n}
		Under the previous notation, up to a subsequence, the following assertions hold:\\
		(1) $\sum_{i=1}^kv_{i,n}(0)=1$ for any $n\geq 1$.\\
		(2) $|\hat{f}_{i,n}(x)|\leq d_m\frac{\eta(x_n)}{L_n}r_n$ a.e. for $x\in \Omega_n$ and $\|\hat{f}_{i,n}\|_{L^{\infty}(\Omega_n)}\to 0$ as $n\to\infty$, $\forall 1\leq i\leq k$.\\
		(3) There holds $\Omega_n':=B_{\bar{r}_n}\subseteq B_{1/r_n}\subseteq\Omega_n$ for every $n\geq 1$, where
		\begin{align}\label{def:bar-r-n}
			\bar{r}_n:=\frac{1}{2r_n}\text{dist}(x_n,\partial B_2)\to +\infty\quad \text{as}~~n\to\infty.
		\end{align}
		(4) The sequence $\{\bm{v}_n\}$ satisfies
		\begin{align}\label{sys:v-n}
			-\Delta v_{i,n}=\hat{f}_{i,n}- v_{i,n}^{p_i}\sum_{j\neq i}a_{ij}M_{i,j;n}v_{j,n}^{p_j}\quad \text{in}~~\Omega_n,
		\end{align}
		where
		\begin{align}\label{def:M-ijn}
			M_{i,j,n}:=\eta(x_n)\frac{r_n}{L_n}\beta_{ij;n} \left(\frac{\eta(x_n)}{L_nr_n}\right)^{-p_i-p_j}.
		\end{align}
		(5) The sequence $\{\bm{v}_n\}$ has uniformly bounded Lip-seminorm in $\Omega_n'$, that is,
		$$\max_{1\leq i\leq k}\sup_{x,y\in \Omega_n',\atop x\neq y}\frac{|v_{i,n}(x)-v_{i,n}(y)|}{|x-y|}\leq 128,\quad \forall n\geq 1;$$
		moreover, $|\nabla v_{1,n}(0)|\to 1$ as $n\to\infty$.\\
		(6) There exists a globally Lipschitz continuous non-negative function $\bm{v}$ in $\mathbb{R}^N$ such that $\bm{v}_n\to \bm{v}$ in $C_{\text{loc}}(\mathbb{R}^N;\mathbb{R}^k)$ as $n\to\infty$; moreover,
		$$\max_{1\leq i\leq k}\sup_{x,y\in \mathbb{R}^N,\atop x\neq y}\frac{|v_{i}(x)-v_{i}(y)|}{|x-y|}\leq 1.$$
	\end{lemma}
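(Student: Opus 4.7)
The plan is to verify the six assertions essentially in the stated order, exploiting the explicit formulas for $v_{i,n}$, $\bar{v}_{i,n}$, $M_{i,j;n}$, $r_n$, $L_n$ together with two features of the cut-off $\eta$: the oscillation control \eqref{property:eta} and the global bound $|\nabla\eta|\leq C$ on $B_2$.

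Claims (1), (2), and (4) are essentially algebraic. For (1), I substitute the definition of $r_n$ to obtain $\sum_{i=1}^{k}v_{i,n}(0)=\eta(x_n)\sum_{i=1}^{k}u_{i,n}(x_n)/(L_nr_n)=1$. For (2), the bound $|\hat{f}_{i,n}(x)|\leq d_m\,\eta(x_n)\,r_n/L_n$ is immediate from (H1) and $\|\bm{u}_n\|_{L^{\infty}}\leq m$, and it tends to zero since $\eta(x_n)$ is bounded, $r_n\to 0$, and $L_n\to+\infty$. For (4), I apply the chain rule to the rescaling $y=x_n+r_nx$ in \eqref{Sys:main-interior-general}; the definition of $M_{i,j;n}$ in \eqref{def:M-ijn} is chosen precisely to absorb the scaling factors on the competition term after multiplying both sides by $r_n\eta(x_n)/L_n$.

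For (3), writing $d_n=\mathrm{dist}(x_n,\partial B_2)$ and splitting into the cases $x_n\in B_{3/2}$ (where $d_n\geq 1/2$ and $\eta(x_n)\leq 1$) and $x_n\in B_2\setminus B_{3/2}$ (where $\eta(x_n)=d_n^{2}$ and $d_n\leq 1/2$), I compute in both cases
\[
\bar{r}_n=\frac{d_n}{2r_n}=\frac{d_n L_n}{2\eta(x_n)\sum_{i=1}^{k}u_{i,n}(x_n)}\geq \frac{L_n}{4km}\to+\infty,
\]
using $\|\bm{u}_n\|_{L^{\infty}}\leq m$. The inclusions $B_{\bar{r}_n}\subseteq B_{1/r_n}\subseteq\Omega_n$ follow from $d_n\leq 2$ and $|x_n|<2<3$.

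The heart of the argument lies in (5) and (6). For (5), differentiating $v_{i,n}$ and using the identity $\eta(y)\nabla u_{i,n}(y)=\nabla(\eta u_{i,n})(y)-u_{i,n}(y)\nabla\eta(y)$, I obtain
\[
|\nabla v_{i,n}(x)|=\frac{\eta(x_n)}{\eta(y)}\cdot\frac{|\eta(y)\nabla u_{i,n}(y)|}{L_n}\leq \frac{\eta(x_n)}{\eta(y)}\left(1+\frac{m|\nabla\eta(y)|}{L_n}\right).
\]
For $x\in B_{\bar{r}_n}$ the point $y=x_n+r_nx$ lies in $B_{d_n/2}(x_n)$, where \eqref{property:eta} gives $\eta(x_n)/\eta(y)\leq 64$, while $|\nabla\eta|$ is globally bounded on $B_2$; hence the bound by $128$ holds for all sufficiently large $n$. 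The statement $|\nabla v_{1,n}(0)|\to 1$ follows from the same identity evaluated at $y=x_n$, combined with $|\nabla\eta|\leq C$, $u_{1,n}(x_n)\leq m$, and $L_n\to+\infty$. For (6), the Lipschitz bound from (5) together with $\sum_{i}v_{i,n}(0)=1$ and $\bar{r}_n\to+\infty$ yields subsequential convergence $\bm{v}_n\to\bm{v}$ in $C_{\mathrm{loc}}(\mathbb{R}^N;\mathbb{R}^k)$ by Ascoli--Arzel\`a. To sharpen the Lipschitz constant to $1$, I instead pass to the auxiliary sequence $\{\bar{v}_{i,n}\}$, which satisfies $|\nabla\bar{v}_{i,n}|=|\nabla(\eta u_{i,n})(y)|/L_n\leq 1$ on $\Omega_n'$; since $\eta(x_n+r_nx)/\eta(x_n)\to 1$ locally uniformly (a consequence of $r_n/d_n\to 0$, itself established in (3)), we deduce $\bar{v}_{i,n}\to v_i$ as well, and the bound $|v_i(x)-v_i(y)|\leq|x-y|$ passes to the limit. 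I expect the main obstacle to be this last step: the naive Lipschitz constant $128$ from (5) must be improved to the sharp value $1$ by working with $\bar{v}_{i,n}$ and exploiting the quantitative decay $r_n/d_n\to 0$.
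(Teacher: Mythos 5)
Your overall structure mirrors the paper's: points (1), (2), (4) are read off from the definitions; for (5) you use the same identity $\eta(y)\nabla u_{i,n}(y)=\nabla(\eta u_{i,n})(y)-u_{i,n}(y)\nabla\eta(y)$ together with \eqref{property:eta}; and for (6) you reduce to the auxiliary sequence $\bar{\bm v}_n$ whose Lip-seminorm is trivially $\leq 1$. Two points of comparison are worth noting. For (3), your case split on $x_n\in B_{3/2}$ versus $x_n\in B_2\setminus B_{3/2}$ is correct, but the paper achieves the same with a single inequality: since $\eta$ vanishes on $\partial B_2$ and is Lipschitz with constant $\bar l=\|\nabla\eta\|_{L^\infty}$, one has $\eta(x_n)\leq\bar l\,\mathrm{dist}(x_n,\partial B_2)$, whence $r_n\leq m\bar l\,\mathrm{dist}(x_n,\partial B_2)/L_n$ and $\bar r_n\geq L_n/(2m\bar l)$ with no case analysis.

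The more substantive issue is in (6). You assert that $\eta(x_n+r_nx)/\eta(x_n)\to 1$ locally uniformly ``as a consequence of $r_n/d_n\to 0$.'' That implication is false in the regime $x_n\to\partial B_2$: there $\eta(x_n)=d_n^2$, so the relative variation is of size $\bar l r_n|x|/\eta(x_n)\sim r_n|x|/d_n^2$, and $r_n/d_n\to 0$ does not force $r_n/d_n^2\to 0$ (take $r_n\sim d_n^{3/2}$ with $d_n\to 0$). The conclusion $\eta(x_n+r_nx)/\eta(x_n)\to 1$ is nevertheless true, but for a different reason: $r_n/\eta(x_n)=\sum_i u_{i,n}(x_n)/L_n\leq km/L_n\to 0$ directly, which is exactly the quantitative statement you need. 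The paper sidesteps the ratio argument altogether by estimating $|\bar v_{i,n}(x)-v_{i,n}(x)|=u_{i,n}(x_n+r_nx)\,|\eta(x_n+r_nx)-\eta(x_n)|/(L_nr_n)\leq m\bar l|x|/L_n\to 0$ uniformly on compacts; this is cleaner and avoids the trap you fell into. The remaining steps — passing the Lipschitz bound $|\nabla\bar{\bm v}_n|\leq 1$ to the limit — are fine.
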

	\begin{proof}
		Points (1), (2) and (4) follow by definitions.
		\par
		(3) The fact that $\Omega_n'\subseteq B_{1/r_n}\subseteq \Omega_n$ follows by definition. On the other hand, since
		\begin{align}\label{eq:r-n-leq-dist}
			r_n=\sum_{i=1}^k\frac{(\eta u_{i,n})(x_n)}{L_n}\leq \frac{\|\bm{u}_{n}\|_{L^{\infty}(\Omega;\mathbb{R}^k)}}{L_n}\eta(x_n)\leq \frac{m\bar{l}}{L_n}\text{dist}(x_n,\partial B_2),\quad \forall n\geq 1,
		\end{align}
		where $\bar{l}:=\|\nabla\eta\|_{L^{\infty}(B_3)}>0$ and $\|\bm{u}_{n}\|_{L^{\infty}(\Omega;\mathbb{R}^k)}=\sum_{i=1}^k\|u_{i,n}\|_{L^{\infty}(\Omega)}$, we deduce from \eqref{def:L-n} that
		$$\bar{r}_n=\frac{1}{2r_n}\text{dist}(x_n,\partial B_2)\geq \frac{L_n}{2m\bar{l}}\to+\infty,\quad \text{as}~~n\to\infty.$$
		\par
		(5) For any $1\leq i\leq k$ and any $x\in \Omega_n'$, one has $|x_n+r_nx-x_n|<\frac{1}{2}\text{dist}(x_n,\partial B_2)$. It follows from \eqref{property:eta} that
		$\frac{\eta(x_n)}{\eta(x_n+r_nx)}\leq 64$. Hence,
		\begin{align*}
			|\nabla v_{i,n}(x)|
			&=\frac{\eta(x_n)}{\eta(x_n+r_nx)}\left|\frac{\eta(x_n+r_nx)(\nabla u_{i,n})(x_n+r_nx)}{L_n}\right|\\
			&=\frac{\eta(x_n)}{\eta(x_n+r_nx)}\left|\nabla \bar{v}_{i,n}(x)-\frac{u_{i,n}(x_n+r_nx)(\nabla \eta)(x_n+r_nx)}{L_n}\right|\\
			&\leq 64\left[|\nabla \bar{v}_{i,n}(x)|+\frac{m\bar{l}}{L_n}\right]\leq 128,\quad \forall n\gg 1.
		\end{align*}
		On the other hand, note that $|\nabla\bar{v}_{1,n}(0)|=1$ and
		$$\nabla v_{1,n}(0)=\nabla\bar{v}_{1,n}(0)-\frac{u_{i,n}(x_n)(\nabla \eta)(x_n)}{L_n}=\nabla\bar{v}_{1,n}(0)+o(1).$$
		It follows that $|\nabla v_{1,n}(0)|\to 1$ as $n\to\infty$.
		\par
		(6) By Point (3), we see that $\Omega_n'$ exhausts $\mathbb{R}^N$. Thus, it follows from Points (1), (5) and the Ascoli-Arzel\`{a} theorem that there exists $\bm{v}\in C_{\text{loc}}(\mathbb{R}^N;\mathbb{R}^k)$ such that $\bm{v}_n\to \bm{v}$ in $C_{\text{loc}}(\mathbb{R}^N;\mathbb{R}^k)$ as $n\to\infty$. On the other hand, since for any compact set $K'\Subset \mathbb{R}^N$,
		$$\max_{x\in K'}|\bar{v}_{i,n}(x)-v_{i,n}(x)|\leq \max_{x\in K'}(\bar{l}|x|u_{i,n}(x_n+r_nx)/L_n)\leq \frac{m\bar{l}\max_{x\in K'}|x|}{L_n}\to0\quad \text{as}~~n\to\infty,$$
		it follows that $\bar{\bm{v}}_n\to \bm{v}$ in $C_{\text{loc}}(\mathbb{R}^N;\mathbb{R}^k)$ as $n\to\infty$. Note that $|\nabla \bar{\bm{v}}_n|\leq 1$ in $\Omega_n$ for any $n\geq 1$, we then deduce that
		$$\max_{1\leq i\leq k}\sup_{x,y\in \mathbb{R}^N,\atop x\neq y}\frac{|v_{i}(x)-v_{i}(y)|}{|x-y|}\leq 1.$$
		The proof is finished.
	\end{proof}
	\par 
	Next, note that since $k \geq 3$, the quantities $M_{i,j;n}$ ($i \neq j$) defined in \eqref{def:M-ijn} may not lie on the same scale as $n \to \infty$. Specifically, for some pairs $(i, j)$ with $i \neq j$, the sequences $\{M_{i,j;n}\}_{n=1}^{\infty}$ could be unbounded, while for other such pairs, they remain bounded. Consequently, proving the nontriviality or non-constancy of $v_1$ as in \cite{Soave-Z2015ARMA, Zhang-Zhang2025} appears to be impossible. To tackle this problem, we rescale $\{\bm{v}_n\}$ in a suitable way via an induction method and then prove that this new sequence satisfies all the properties that we need in the next subsection. Actually, we can deduce the following result.
	\begin{lemma}\label{lem:new-u-n-property}
		Up to a relabeling of the subscript set $\{2,3,\dots,k\}$, there exist an integer $2\leq \bar{k}\leq k$ and a sequence $\{R_n\}\subseteq (0,1]$ such that the functions $\{(\bar{u}_{1,n},\dots,\bar{u}_{\bar{k},n})\}$ defined by 
		\begin{align}
			\bar{u}_{i,n}:=\frac{1}{R_n}v_{i,n}(R_nx),\quad x\in U_n:=\Omega_n/R_n,\quad 1\leq i\leq \bar{k},~~n\geq 1,
		\end{align}
		weakly solve
		\begin{align}\label{sys:bar-u-in}
			-\Delta \bar{u}_{i,n}=g_{i,n}(x) -\bar{u}_{i,n}^{p_i}\sum_{j=1,j\neq i}^{\bar{k}}a_{ij}\overline{M}_{i,j,;n}\bar{u}_{j,n}^{p_j}\quad \text{in}~~U_n,\quad  1\leq i\leq \bar{k},~~n\geq 1,
		\end{align}
		where $\overline{M}_{i,j,;n}:=R_n^{p_i+p_j+1}M_{i,j;n}$ and 
		$$ g_{i,n}(x):=R_n\hat{f}_{i,n}(R_nx)-R_n[v_{i,n}(R_nx)]^{p_i}\sum_{j=\bar{k}+1,j\neq i}^{k}a_{ij}M_{i,j,;n}[v_{j,n}(R_nx)]^{p_j}.$$
		Moreover, up to a subsequence, the following assertions hold:\\
		(1) $\sum_{i=1}^{\bar{k}}\bar{u}_{i,n}(0)=1$ for any $n\geq 1$.\\
		(2) $g_{i,n}(x)\leq d_m\frac{\eta(x_n)}{L_n}R_nr_n$ a.e. for $x\in U_n$ and $g_{i,n}\to 0$ in $L^{\infty}_{\text{loc}}(\mathbb{R}^N)$ as $n\to\infty$, $\forall 1\leq i\leq k$.\\
		(3) The sequence $\{(\bar{u}_{1,n},\dots,\bar{u}_{\bar{k},n})\}$ has uniformly bounded Lip-seminorm in $U_n':=\Omega_n'/R_n$, that is,
		$$\max_{1\leq i\leq \bar{k}}\sup_{x,y\in U_n',\atop x\neq y}\frac{|\bar{u}_{i,n}(x)-\bar{u}_{i,n}(y)|}{|x-y|}\leq 128,\quad \forall n\geq 1.$$\\
		(4) There exists a globally Lipschitz continuous non-negative function $(\bar{u}_1,\dots,\bar{u}_{\bar{k}})$ in $\mathbb{R}^N$ such that
		$$(\bar{u}_{1,n},\dots,\bar{u}_{\bar{k},n})\to (\bar{u}_1,\dots,\bar{u}_{\bar{k}})\quad \text{in}~~C_{\text{loc}}(\mathbb{R}^N;\mathbb{R}^{\bar{k}})\quad \text{as}~~n\to\infty.$$\\
		(5) $\bar{u}_1(0)>0$ and $|\nabla \bar{u}_1(0)|>0$.\\
		(6) There exist $2\leq j_0\leq \bar{k}$ and $C>0$ independent of $n$ such that
		$$\overline{M}_{1,j_0;n}\geq C,\quad \frac{1}{2^{N-1}}\int_{\partial B_2}\bar{u}_{1,n}\geq C,\quad \frac{1}{2^{N-1}}\int_{\partial B_2}\bar{u}_{j_0,n}\geq C,\quad \forall n\geq 1.$$
	\end{lemma}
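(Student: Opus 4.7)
\emph{Overall strategy.} The plan is to construct $(\bar k, R_n)$ by an iterative descent on the retained set of indices, starting from $\{1,\dots,k\}$ with $R_n=1$. At each step I verify the six properties; if any fails because an index $j\in\{2,\dots,\bar k\}$ does not contribute a genuine competition partner of index $1$ at the current scale (e.g., $\overline M_{1,j;n}\to 0$ or the spherical average $\int_{\partial B_2}\bar u_{j,n}\to 0$), I remove $j$ from the retained set, relabel $\{2,\dots,k\}$ accordingly, and rescale. The procedure terminates at some $\bar k\geq 2$ because the index set is finite and, as I indicate below, $|\nabla v_{1,n}(0)|\to 1$ rules out the trivial endpoint $\bar k=1$.

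\emph{Construction at a given $\bar k$.} After passing to a subsequence and relabeling $\{2,\dots,k\}$ so that the values $v_{i,n}(0)$ are ordered in a fixed way, I set
\begin{equation*}
R_n\;:=\;\sum_{i=1}^{\bar k}v_{i,n}(0)\;\in\;(0,1],\qquad \bar u_{i,n}(x)\;:=\;\frac{v_{i,n}(R_n x)}{R_n},\quad 1\leq i\leq\bar k,
\end{equation*}
which makes (1) automatic. A direct computation from \eqref{sys:v-n} shows that $\bar u_{i,n}$ solves \eqref{sys:bar-u-in} with $\overline M_{i,j;n}=R_n^{p_i+p_j+1}M_{i,j;n}$ and the stated $g_{i,n}$. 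The rescaling preserves gradients (in particular $|\nabla\bar u_{1,n}(0)|=|\nabla v_{1,n}(0)|\to 1$), so the Lipschitz seminorm bound of Lemma \ref{lem:pre-property-v-n}(5) transfers verbatim to $(\bar u_{1,n},\dots,\bar u_{\bar k,n})$ on $U_n'$, giving (3); Ascoli--Arzel\`a and a diagonal extraction then yield (4).

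\emph{Decay of $g_{i,n}$, property (2).} The $\hat f_{i,n}$-contribution to $g_{i,n}$ vanishes in $L^\infty_{\mathrm{loc}}$ by Lemma \ref{lem:pre-property-v-n}(2). For the dropped competition terms ($j>\bar k$), my plan is to exploit the ordering---which forces $v_{j,n}(0)$ to be negligible compared to $R_n$---together with the subsolution inequality $-\Delta v_{j,n}\leq -a_{j1}M_{j,1;n}v_{j,n}^{p_j}v_{1,n}^{p_1}+\|\hat f_{j,n}\|_{\infty}$ on balls where $v_{1,n}$ is bounded below. Applying Lemma \ref{lem:decay-Mu-p} upgrades the Lipschitz-based smallness of $v_{j,n}$ to the quantitative estimate $R_n M_{i,j;n}v_{j,n}(R_n x)^{p_j}\to 0$ locally uniformly, which combined with the uniform bound on $v_{i,n}(R_n x)$ gives $g_{i,n}\to 0$.

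\emph{Main obstacle: (5) and (6).} The heart of the argument is to show that the descent terminates at some $\bar k\geq 2$ for which (5) and (6) hold simultaneously. My plan is to use the dichotomy: at the current $\bar k$, either some $j_0\in\{2,\dots,\bar k\}$ realizes (6) and we stop, or every retained index $j$ satisfies $\overline M_{1,j;n}\to 0$ or $\int_{\partial B_2}\bar u_{j,n}\to 0$; in the latter case $j$ can be absorbed into $g_{1,n}$ by shrinking $\bar k$. The pivotal step is to rule out termination at $\bar k=1$: if this occurred, then $\bar u_{1,n}$ would converge to a globally Lipschitz harmonic function $\bar u_1$ on $\mathbb R^N$ with $|\nabla\bar u_1(0)|=1$, and such a profile, combined with the original choice of $L_n$ as the maximum of $|\nabla(\eta u_{i,n})|$ and with the Alt--Caffarelli--Friedman-type monotonicity formula of Theorem \ref{thm:ACF-formula}, leads to a contradiction---this is the step I expect to require the most delicate analysis, since it couples the blow-up construction with the monotonicity formula. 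Once (6) is secured, (5) follows from interior elliptic $C^{1,\alpha}$ regularity near $0$ (using Lemma \ref{lem:decay-Mu-p} to bound the right-hand side of \eqref{sys:bar-u-in}), which yields $C^1_{\mathrm{loc}}$ convergence of $\bar u_{1,n}$ near $0$ and hence $|\nabla\bar u_1(0)|=\lim|\nabla v_{1,n}(0)|=1>0$, with $\bar u_1(0)>0$ inherited from the definition of $R_n$ together with the relabeling that keeps $v_{1,n}(0)$ comparable to $R_n$.
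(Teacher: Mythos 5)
Your construction diverges from the paper's on a point that matters, and the two places where you say "I expect this to require delicate analysis" are precisely where your scheme breaks.

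The paper's descent keeps index $1$ together with the set $\mathcal{I}_2:=\{j\neq 1 : v_j(0)=0\}$, and discards $\mathcal{I}_1:=\{j\neq 1 : v_j(0)>0\}$. This is the mechanism that makes the discarded terms vanish: since $v_s$ (for $s\in\mathcal{I}_1$) is bounded away from zero near the origin, Lemma \ref{lem:decay-Mu-p} gives a \emph{uniform} bound $M_{i,s;n}v_{i,n}^{p_i}\leq C$ there, and the extra factor $R_n\to 0$ then sends the contribution to zero in $L^\infty_{\mathrm{loc}}$ for \emph{every} retained equation. Your criterion — drop $j$ when $\overline M_{1,j;n}\to 0$ or $\int_{\partial B_2}\bar u_{j,n}\to 0$ — controls only the $j$-term in the equation for $\bar u_{1,n}$. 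Because $M_{i,j;n}$ depends on the exponent $p_i+p_j$, the decay $\overline M_{1,j;n}\to 0$ says nothing about $\overline M_{i,j;n}$ for $2\leq i\leq\bar k$, so you have no bound on the dropped contribution to $g_{i,n}$ for $i\neq 1$, and (2) fails. The second criterion ($\int_{\partial B_2}\bar u_{j,n}\to 0$) is even weaker and gives no pointwise control on the competition term at all.

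Second, your route to $\bar u_1(0)>0$ in (5) does not work. You assert this is "inherited from the definition of $R_n$ together with the relabeling that keeps $v_{1,n}(0)$ comparable to $R_n$", but index $1$ is distinguished by achieving the maximal gradient $L_n$, not the maximal value, so nothing forces $v_{1,n}(0)$ to be among the largest. The paper's Case 2 ($v_1(0)=0$) is exactly the situation you are trying to rule out by fiat; it is a genuine possibility that the paper handles by iterating the rescaling until either the rescaled component $1$ has positive limit at the origin, or the retained index set has shrunk to size $2$ — in which case an elementary counting argument ($\mathcal I_1\neq\emptyset$ and $\mathcal I_2\neq\emptyset$ cannot both hold when there is only one other index) shows that Case 2 is impossible. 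Relatedly, the step you flag as needing Theorem \ref{thm:ACF-formula} (ruling out $\bar k=1$) is actually immediate and does not involve the monotonicity formula: if all competitors were absorbed, $\bar u_1$ would be a globally Lipschitz, nonnegative harmonic function with $|\nabla\bar u_1(0)|=1$, which contradicts the Liouville theorem for harmonic functions with bounded gradient. The paper never invokes the ACF formula inside this lemma; it is reserved for the subsequent contradiction argument in Section 4.2. You should rebuild the descent around the dichotomy $v_j(0)>0$ versus $v_j(0)=0$, since that is what simultaneously supplies (1), (2), and the base case of the induction.
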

	\begin{proof}
		We divide the proof according to the value of $v_1(0)$.
		\par 
		\textbf{Case 1.} $v_1(0)>0$. In this case, we take $\bar{k}=k$ and $R_n=1,\forall n\geq 1$. Then by definition, $\bar{u}_{i,n}=v_{i,n}$, $g_{i,n}=\hat{f}_{i,n}$ and $\overline{M}_{i,j;n}=M_{i,j;n}$ for any $i\neq j$ and $n\geq 1$. Hence, by Lemma \ref{lem:pre-property-v-n}, $(\bar{u}_{1,n},\dots,\bar{u}_{\bar{k},n})$ solves \eqref{sys:bar-u-in}, and Points (1)--(4) follow. It remains to prove Points (5) and (6). We establish these via the following two claims.
		\par 
		\textbf{Claim (I).} $|\nabla v_1(0)|>0$.
		\par 
		Indeed, since $v_1(0)>0$, by Lemma \ref{lem:pre-property-v-n} (6), there exist $C>0$ and $0<r<1$ such that $v_{1,n}\geq C$ in $B_{2r}$ for any $n\geq1$. Combining this with \eqref{sys:v-n} yields that 
		$$-\Delta v_{j,n}\leq - a_{j1}C^{p_1}M_{j,1;n}v_{j,n}^{p_j}+1\quad \text{in}~~B_{2r},\quad 0\leq v_{j,n}\leq 256r+1\quad \text{on}~~\partial B_{2r}$$
		for any $2\leq j\leq k$ and $n\geq 1$. Then by Lemma \ref{lem:decay-Mu-p}, we obtain 
		\begin{align}\label{eq:bdd-M-j1n-v-jn}
			M_{1,j;n}v_{j,n}^{p_j}=M_{j,1;n}v_{j,n}^{p_j}\leq C_1\quad \text{in}~~B_r,\quad \forall 2\leq j\leq k,~~\forall n\geq 1,
		\end{align}
		where $C_1>0$ is a constant independent of $n$. Considering the equation for $v_{1,n}$, this provides the uniform boundedness of $\{\Delta v_{1,n}\}$ in $L^{\infty}(B_r)$. The standard elliptic estimates then yields that $\{v_{1,n}\}$ is bounded in $C^{1,\alpha}(\overline{B_{r/2}})$ for any $0<\alpha<1$. Thus, by the Ascoli-Arzel\`{a} theorem, up to a subsequence, $v_{1,n}\to v_1$ in $C^1(\overline{B_{r/2}})$ as $n\to\infty$. It then follows from Lemma \ref{lem:pre-property-v-n} (5) that $|\nabla v_1(0)|=\lim\limits_{n\to\infty}|\nabla v_{1,n}(0)|=1$. The claim is proved.
		\par 
		\textbf{Claim (II).} Point (6) holds true.
		\par 
		Indeed, since $\hat{f}_{i,n}\leq d_m\frac{\eta(x_n)}{L_n}r_n$, by Lemma \ref{lem:non-decrease-widetilde-H}, the fucntion 
		$$t\mapsto \widetilde{H}(v_{1,n},0,t)+|B_1|d_mr_n\frac{\eta(x_n)}{L_n}\frac{t^2}{2}\quad\text{is non-decreasing for }t\in (0,2].$$
		Combining this with $\lim\limits_{n\to\infty}r_n\frac{\eta(x_n)}{L_n}=0$ and $v_{1,n}\geq C$ in $B_{2r}$ for any $n\geq 1$, it follows that $\widetilde{H}(v_{1,n},0,2)\geq C_1$ for any $n\geq 1$, where $C_1>0$ is independent of $n$. Therefore, it suffices to prove the existence of $2\leq j_0\leq k$ such that 
		$$\limsup_{n\to\infty} \widetilde{H}(v_{j_0,n},0,2)>0,\quad \limsup_{n\to\infty} M_{1,j_0;n}>0.$$
		To this aim, we consider the set 
		$$\mathcal{J}:=\{2\leq j\leq k: \limsup_{n\to\infty} M_{1,j;n}>0\}.$$
		If $\mathcal{J}=\emptyset$, then by Lemma \ref{lem:pre-property-v-n} and Claim (I), we can deduce that $v_1$ is a nontrivial nonconstant harmonic function in $\mathbb{R}^N$, contradicting to the classical Liouville theorem. Hence, $\mathcal{J}\neq \emptyset$. Write $j_1=1$ and $\mathcal{J}=\{j_2,j_3,\dots,j_l\}$ for some $2\leq l\leq k$. Then $(v_{j_1,n},\dots, v_{j_l,n})$ solves 
		$$-\Delta v_{j_i,n}=h_{j_i,n}-v_{j_i,n}^{p_{j_i}}\sum_{s\neq i}^{l}a_{j_{i}j_{s}}M_{j_{i},j_{s};n}v_{j_{s},n}^{p_{j_s}}\quad \text{in}~~\Omega_n,\quad \forall 1\leq i\leq l,$$
		where $h_{j_i,n}:=\hat{f}_{j_{i},n}-v_{j_i,n}^{p_{j_i}}\sum_{s\notin \{1\}\cup \mathcal{J}}a_{j_{i}s}M_{j_{i},s;n}v_{s,n}^{p_{s}}$. Since $h_{j_1,n}\to 0$ in $L^{\infty}_{\text{loc}}(\mathbb{R}^N)$ as $n\to\infty$, we can deduce that $-\Delta v_{j_i}\leq  0$ and $-\Delta \widetilde{v}_{j_i}\geq 0$ in $\mathbb{R}^N$, where 
		$$\widetilde{v}_{j_i}:=v_{j_i}-\sum_{s\neq i}^l\frac{a_{j_{i}j_{s}}}{a_{j_{s}j_{i}}}v_{j_s}.$$
		\par 
		We assert that at least one of $2\leq i\leq l$ satisfies $\widetilde{H}(v_{j_i},0,2)>0$. 
		\par 
		Indeed, if not, then for any $2\leq i\leq l$, $\widetilde{H}(v_{j_i},0,2)=0$. By the monotonic property of $\widetilde{H}(v_{j_i,n},0,\cdot)$ (Lemma \ref{lem:non-decrease-widetilde-H}), it follows that $v_{j_i}\equiv 0$ in $\overline{B_2}$. This together with $-\Delta v_{j_1}\leq  0$ and $-\Delta \widetilde{v}_{j_1}\geq 0$ in $\mathbb{R}^N$ implies that $v_1=v_{j_1}$ is a non-negative harmonic function in $B_2$. By the strong maximum principle and $v_1(0)>0$, it follows that $v_1>0$ in $B_2$. By continuity, there exist $\delta>0$ and $1<r<2$ such that $v_{1,n}\geq \delta $ in $B_r$ for $n$ large. Hence, for any $2\leq i\leq l$ and any $x\in B_{(1+r)/2}$, the equation for $v_{j_i,n}$ yields
		$$-\Delta v_{j_i,n}\leq -\delta^{p_1}a_{j_ij_1} M_{j_{i},j_{1};n}v_{j_i,n}^{p_{j_i}}+\delta_n\quad \text{in}~~B_{(r-1)/2}(x),$$
		where $\delta_n:=\|\hat{f}_{j_i,n}\|_{L^{\infty}(B_r)}\to 0$ as $n\to\infty$. It then follows from Lemma \ref{lem:decay-Mu-p} that 
		$$\sup_{y\in B_{(r-1)/4}(x)}\delta^{p_1}a_{j_ij_1} M_{j_{i},j_{1};n}v_{j_i,n}^{p_{j_i}}(y)\leq \frac{C\max_{\overline{B_r}}v_{j_i,n}}{[(r-1)/4]^2}+\delta_n\to 0\quad \text{as}~~n\to\infty,$$
		where $C>0$ is a constant depending only on $N$.
		This implies 
		\begin{align}\label{eq:decay-to-0-Mvp}
			\sup_{x\in B_{(1+r)/2}}M_{j_{i},j_{1};n}v_{j_i,n}^{p_{j_i}}(x)\to 0\quad \text{as}~~n\to\infty,\quad \forall 2\leq i\leq l.
		\end{align}
		As a consequence, we deduce from the equation for $v_{1,n}$ that $v_1(0)=1$ and $|\nabla v_1(0)|=1$.
		Moreover, it follows from Lemma \ref{lem:pre-property-v-n} (6) that $|\nabla v_1|\leq 1$ in $B_{(1+r)/2}$. Take $\theta\in \mathbb{S}^{N-1}$ such that $\partial_{\theta}v_1(0)=1$. Then $\partial_{\theta}v_1$ is harmonic in $B_{(1+r)/2}$
		having a maximum point $0$. By the strong maximum principle, $\partial_{\theta}v_1\equiv 1$ in $B_{(1+r)/2}$. This with the fact $v_1(0)=1$ then implies that $v_1(x)=0$ for some $x\in \partial B_1$, in contradiction with $v_1>0$ in $B_2$. Hence, there exists $2\leq i\leq l$ such that $\widetilde{H}(v_{j_i},0,2)>0$. 
		\par 
		By the uniform convergence of $\{v_{j_i,n}\}$, we conclude that $j_0=j_i$ satisfies the desired properties.
		\par 
		\textbf{Case 2.} $v_1(0)=0$. Let 
		$$\mathcal{I}_1:=\{2\leq j\leq k: v_j(0)>0\},\quad \mathcal{I}_2:=\{2\leq j\leq k: v_j(0)=0\}.$$
		Since $v_1(0)=0$, by Lemma \ref{lem:pre-property-v-n} (1),  we have $\mathcal{I}_1\neq\emptyset$. 
		We claim that $\mathcal{I}_2\neq \emptyset$ as well. In fact, if $v_j(0)>0$ for any $2\leq j\leq k$, then as in the proof of Claim (I), we can deduce that $\{\Delta v_{1,n}\}$ is uniformly bounded in $L^{\infty}(B_r)$ for some $r>0$. Hence, up to a subsequence, $v_{1,n}\to v_1$ in $C^1(\overline{B_{r/2}})$ as $n\to\infty$. Combining this with Lemma \ref{lem:pre-property-v-n} (5) yields that $|\nabla v_1(0)|=\lim\limits_{n\to\infty}|\nabla v_{1,n}(0)|=1$. On the other hand, note that $v_1$ is non-negative, we see that  $0$ is a local minimum point of $v_1$ and hence $\nabla v_1(0)=0$. This  contradicts $|\nabla v_1(0)|=1$. The claim is proved. We remark that this in particular implies \textbf{Case 2} does not occur if $k=2$ and hence the main result holds true for $k=2$ (by \textbf{Case 1}).
		\par 
		Next, we focus on the case when $k \geq 3$ (and $v_1(0)=0$) and prove the result by induction on $k$. Let $\widetilde{R}_{n}:=v_{1,n}(0)+\sum_{j\in \mathcal{I}_2}v_{j,n}(0)>0$. Then $\widetilde{R}_{n}\to 0$ as $n\to\infty$. We write $j_1=1$ and $\mathcal{I}_2=\{j_2,\dots,j_l\}$ with $2\leq l\leq k-1$. Define 
		$$w_{j_i,n}(x):=\frac{1}{\widetilde{R}_{n}}v_{j_i,n}(\widetilde{R}_nx),\quad\text{for}~~ x\in \Omega_n/\widetilde{R}_n,\quad \forall 1\leq i\leq l.$$
		Then by definition, $\sum_{i=1}^{l}w_{j_i,n}(0)=1$. It follows from Lemma \ref{lem:pre-property-v-n} that $(w_{j_1,n},\dots,w_{j_l,n})$ is a positive solution of 
		$$-\Delta w_{j_i,n}=\widetilde{g}_{j_i,n}(x)-w_{j_i,n}^{p_{j_i}}\sum_{s\neq i}^{l}a_{j_{i}j_{s}}\widetilde{M}_{j_{i},j_{s};n}w_{j_{s},n}^{p_{j_s}}\quad \text{in}~~\Omega_n/\widetilde{R}_n,\quad \forall 1\leq i\leq l,$$
		where $\widetilde{M}_{j_{i},j_{s};n}:=\widetilde{R}_{n}^{p_{j_i}+p_{j_s}+1}M_{j_{i},j_{s};n}$ and 
		$$\widetilde{g}_{j_i,n}(x):=\widetilde{R}_{n}\hat{f}_{j_{i},n}(\widetilde{R}_{n}x)-\widetilde{R}_{n}[v_{j_i,n}(\widetilde{R}_{n}x)]^{p_{j_i}}\sum_{s\in \mathcal{I}_1}a_{j_{i}s}M_{j_{i},s;n}[v_{s,n}(\widetilde{R}_{n}x)]^{p_{s}}.$$ 
		Hence, $(w_{j_1,n},\dots,w_{j_l,n})$ satisfies \eqref{sys:bar-u-in} with $R_n=\widetilde{R}_n$ and $\overline{M}_{j_{i},j_{s};n}=\widetilde{M}_{j_{i},j_{s};n}$. 
		Note that by the definition of $\mathcal{I}_1$, we have that there exist $r>0$ and $C>0$ such that  $v_{s,n}\geq C$ in $\overline{B_{2r}}$ for any $s\in \mathcal{I}_1$ and $n\geq 1$. Thus as in the proof of \eqref{eq:bdd-M-j1n-v-jn}, we can deduce that $\sum_{s\in \mathcal{I}_1}M_{j_{i},s;n}v_{j_i,n}^{p_{j_i}}\leq C_1$ in $B_r$ for any $n\geq 1$, where $C_1>0$ is independent of $n$. Combining this with $\widetilde{R}_n\to 0$ as $n\to\infty$ yields that 
		$$\widetilde{g}_{j_i,n}\to 0\quad \text{in}~~L^{\infty}_{\text{loc}}(\mathbb{R}^N)\quad \text{as}~~n\to\infty,\quad \forall 1\leq i\leq l.$$
		As a consequence, Points (1)--(4) holds true for $(w_{j_1,n},\dots,w_{j_l,n})$ and $\widetilde{g}_{j_i,n}$.
		\par 
		Let $(w_{j_1},\dots, w_{j_l})$ be the limiting function of $(w_{j_1,n}, \dots, w_{j_l,n})$ as $n\to\infty$. If $w_{j_1}(0)>0$ or $l=2$, then as in the proof of \textbf{Case 1}, the result holds by setting $\bar{k}=l$, $R_n=\widetilde{R}_n$ and $(\bar{u}_{1,n},\dots,\bar{u}_{\bar{k},n})=(w_{j_1,n}, \dots, w_{j_l,n})$. If $w_{j_1}(0)=0$ and $l\geq 3$, then we can proceed analogously to generate new sequences. Note that $l\leq k-1$ and the result holds for $k=2$. The final conclusion can be derived via a finite sequence of operations.
	\end{proof}
	
	\subsection{Applications of the monotonicity formula}
	
	With Lemma \ref{lem:new-u-n-property} in hand, in this subsection, we apply the monotonicity formula (Theorem \ref{thm:ACF-formula}) to complete the proof of Theorem \ref{thm:interior-lip}.
	\par 
	First of all, without loss of generality, we assume $j_0=2$ in Lemma \ref{lem:new-u-n-property} (6). Moreover, to simplify the notation, we may assume that $\bar{k} = k$ and $R_n = 1$ for all $n \geq 1$ in Lemma~\ref{lem:new-u-n-property}; that is, we assume the sequence $\{\bm{v}_n\}$ itself satisfies the properties stated in Lemma~\ref{lem:new-u-n-property}. Then $U_n'=\Omega_n'$ and $U_n=\Omega_n$. We also denote 
	\begin{align}\label{def:D-n}
		D_n:=d_m\frac{\eta(x_n)}{L_n}R_nr_n=d_m\frac{\eta(x_n)}{L_n}r_n,\quad \forall n\geq 1.
	\end{align}
	Since $D_n\to 0$ as $n\to\infty$, by Lemma \ref{lem:new-u-n-property} (6), Lemma \ref{lem:non-decrease-widetilde-H} and H\"{o}lder's inequality, there exist $C_0>0$ and $0<C_0'<\frac{1}{4}C_0^{1/2}\sigma_{N-1}^{-1/2}$ independent of $n$ such that up to a subsequence,
	\begin{align}\label{eq:H-12-big-C}
		H_i(\bm{v}_n,0,r)\geq C_0,\quad \forall r\in [2,\tilde{l}_n],~~\forall i=1,2,
	\end{align}
	where 
	\begin{align}\label{def:tilde-l-n}
		\tilde{l}_n:=\min\left\{\sqrt{\frac{C_0'}{D_n}},\frac{1}{R_nr_n}\right\}=\min\left\{\sqrt{\frac{C_0'}{D_n}},\frac{1}{r_n}\right\}\to +\infty\quad \text{as}~~n\to\infty.
	\end{align}
	\par 
	Next, for $i=1,2$ and $r\geq 2$, we denote
	\begin{align}
		\begin{split}
			J_{i,n}(r)&:=\int_{B_r}\left(|\nabla v_{i,n}|^2+v_{i,n}^{p_i+1}\sum_{j\neq i}^ka_{ij}M_{i,j;n}v_{j,n}^{p_j}-D_nv_{i,n}\right)|x|^{2-N},\\
			\Lambda_{i,n}(r)&:=\frac{r^2\int_{\partial B_r}|\nabla_{\theta}v_{i,n}|^2+v_{i,n}^{p_i+1}\sum_{j\neq i}^ka_{ij}M_{i,j;n}v_{j,n}^{p_j}-D_nv_{i,n}}{\int_{\partial B_r}v_{i,n}^2}\quad \text{if}~~\int_{\partial B_r}v_{i,n}^2>0.
		\end{split}
	\end{align}
	We observe that $J_{i,n}$ and $\Lambda_{i,n}$ admit a uniform positive lower bound at $r=2$.
	
	\begin{lemma}\label{lem:interior-lip-J-Lambda}
		There exists $C>0$ independent of $n$ such that
		$$\Lambda_{i,n}(2)\geq C,\quad J_{i,n}(2)\geq C,\quad i=1,2\quad \text{for $n$ large}.$$
	\end{lemma}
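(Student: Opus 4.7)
The plan is to reduce Lemma \ref{lem:interior-lip-J-Lambda} to Lemma \ref{lem:property-global-lip-sol} (3) via a simple $1/2$-rescaling, treating the $D_n$-perturbation as an $o(1)$ correction. Set $\tilde v_{i,n}(x) := v_{i,n}(2x)/2$ on $\Omega_n/2$. A direct computation shows that $\tilde{\bm v}_n$ weakly solves a system of the form \eqref{sys:v-n} with rescaled competition coefficients $\tilde M_{i,j;n} := 2^{p_i+p_j+1}M_{i,j;n}$ (the symmetry $\tilde M_{i,j;n} = \tilde M_{j,i;n}$ is preserved, since $M_{i,j;n} = \beta_{ij;n} = \beta_{ji;n}$) and a right-hand-side perturbation $2g_{i,n}(2\cdot)$ of $L^\infty$-norm bounded by $2D_n \to 0$.

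I then verify the hypotheses of Lemma \ref{lem:property-global-lip-sol} (3) for $\tilde{\bm v}_n$ on $B_2$ with the index pair $(i,j)=(1,2)$: the uniform $W^{1,\infty}(B_2)$-bound for $n$ large follows from Lemma \ref{lem:new-u-n-property} (3) together with $\bar r_n\to\infty$; the change of variables $y = 2x$ gives $H_s(\tilde{\bm v}_n,0,1) = H_s(\bm v_n,0,2)/4 \geq C_0/4$ for $s\in\{1,2\}$ by \eqref{eq:H-12-big-C}; and $\tilde M_{1,2;n} = \tilde M_{2,1;n} \geq 2^{p_1+p_2+1}C > 0$ by Lemma \ref{lem:new-u-n-property} (6) (after the WLOG choice $j_0 = 2$). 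The lemma then provides $\varepsilon_0 > 0$, independent of $n$, bounding below, for $s\in\{1,2\}$, the integral $\int_{B_1}[|\nabla\tilde v_{s,n}|^2 + \tilde v_{s,n}^{p_s+1}\sum_{l\neq s}a_{sl}\tilde M_{s,l;n}\tilde v_{l,n}^{p_l}]|x|^{2-N}\,dx$ and its spherical Rayleigh-quotient analogue on $\partial B_1$. Undoing the scaling converts these, up to explicit constants, into uniform positive lower bounds on the $D_n$-free pieces of $J_{s,n}(2)$ and $\Lambda_{s,n}(2)$.

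Finally, I dispose of the perturbation terms. The missing summand in $J_{s,n}(2)$ is $-D_n\int_{B_2} v_{s,n}\,|y|^{2-N}\,dy$, which is $O(D_n) \to 0$ since $\|v_{s,n}\|_{L^\infty(B_2)}$ is bounded (Lemma \ref{lem:new-u-n-property}) and $\int_{B_2}|y|^{2-N}\,dy < \infty$ (as $N\ge 3$). The analogous correction for $\Lambda_{s,n}(2)$ is similarly negligible, using $\int_{\partial B_2} v_{s,n}^2 = 2^{N-1} H_s(\bm v_n,0,2) \ge 2^{N-1} C_0$ in the denominator. For $n$ large, these corrections are dominated by the previous lower bounds, completing the proof. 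The only real technical care is the rescaling bookkeeping; the substantive analytic content (the dichotomy on whether $v_1 v_2 \equiv 0$ on the limiting sphere, with Fatou used in the overlapping case and a variational Poincar\'e-type inequality on a proper subset of $\mathbb{S}^{N-1}$ used in the segregated case) is already encapsulated inside Lemma \ref{lem:property-global-lip-sol} (3).
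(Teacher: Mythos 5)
Your argument is correct and follows essentially the same route as the paper: rescale to $B_1$ (the paper uses $w_{i,n}(x):=v_{i,n}(2x)$ without the additional division by $2$, which produces the flat factor $4M_{i,j;n}$ instead of your $2^{p_i+p_j+1}M_{i,j;n}$, but this makes no substantive difference), verify the lower bounds on $H_s(\cdot,0,1)$ via \eqref{eq:H-12-big-C} and on $M_{1,2;n}$ via Lemma~\ref{lem:new-u-n-property}~(6), observe that the $D_n$-terms are $o(1)$, and conclude by Lemma~\ref{lem:property-global-lip-sol}~(3). The bookkeeping of the rescaled coefficients and the treatment of the $D_n$-perturbation (both in $J_{i,n}(2)$ and in the Rayleigh quotient $\Lambda_{i,n}(2)$) are handled correctly.
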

	\begin{proof}
		For $n\geq 1$ and $1\leq i\leq k$, let 
		$$w_{i,n}(x):=v_{i,n}(2x),\quad x\in B_2\subseteq \Omega_n/2.$$
		Then 
		$$J_{i,n}(2)=\int_{B_1}\left(|\nabla w_{i,n}|^2+w_{i,n}^{p_i+1}\sum_{j\neq i}^k4a_{ij}M_{i,j;n}w_{j,n}^{p_j}-4D_nw_{i,n}\right)|x|^{2-N},$$
		and 
		$$\Lambda_{i,n}(2)=\frac{\int_{\partial B_1}|\nabla_{\theta}w_{i,n}|^2+w_{i,n}^{p_i+1}\sum_{j\neq i}^k4a_{ij}M_{i,j;n}w_{j,n}^{p_j}-4D_nw_{i,n}}{\int_{\partial B_1}w_{i,n}^2}.$$
		By \eqref{eq:H-12-big-C}, we have $H_i(\bm{w}_n,0,1)\geq C_0>0$ for $i=1,2$ and $n\geq 1$. 
		Since $D_n\to 0$ as $n\to\infty$, this implies that
		$$\int_{B_1}D_nw_{i,n}|x|^{2-N}\to 0,\quad \frac{\int_{\partial B_1}D_nw_{i,n}}{\int_{\partial B_1}w_{i,n}^2}\to 0,\quad \text{as}~~n\to\infty,\quad i=1,2.$$
		Combining these with the fact that $M_{1,2;n} \geq C > 0$ (by Lemma \ref{lem:new-u-n-property} (6)), we apply Lemma \ref{lem:property-global-lip-sol} (3) to deduce
		\begin{align*}
			\liminf_{n\to\infty}J_{i,n}(2)\geq 	\liminf_{n\to\infty}\int_{B_1}\left(|\nabla w_{i,n}|^2+w_{i,n}^{p_i+1}\sum_{j\neq i}^k4a_{ij}M_{i,j;n}w_{j,n}^{p_j}\right)|x|^{2-N}>0,
		\end{align*}
		and 
		\begin{align*}
			\liminf_{n\to\infty}\Lambda_{i,n}(2)\geq \liminf_{n\to\infty}\frac{\int_{\partial B_1}|\nabla_{\theta}w_{i,n}|^2+w_{i,n}^{p_i+1}\sum_{j\neq i}^k4a_{ij}M_{i,j;n}w_{j,n}^{p_j}}{\int_{\partial B_1}w_{i,n}^2}>0,
		\end{align*}
		where $i=1,2$. The proof is finished.
	\end{proof}
	\par 
	Thirdly, having \eqref{eq:H-12-big-C} and Lemma \ref{lem:interior-lip-J-Lambda} in mind, we define
	\begin{equation}\label{def:tilde-r-n}
		\tilde{r}_n:=\sup\left\{r\in [2,\hat{r}_n):~
		\begin{split}
			&D_ns^2\sigma_{N-1}^{1/2}H_i(\bm{v}_n,0,s)^{-1/2}\leq (N-2)^2/4,~~~\Lambda_{i,n}(s)>0,\\
			&H_i(\bm{v}_n,0,s)\geq C_0,~~~D_n\int_{2}^{s}tH_n(t)^{-1/2}dt\leq (N-2)^2/4,\\
			&\forall~ 2\leq s\leq r,~\forall i=1,2
		\end{split}
		\right\},
	\end{equation}
	where $H_n(t):=\min_{i\in \{1,2\}}H_i(\bm{v}_n,0,t)$ and
	\begin{align}\label{def:hat-r-n}
		\hat{r}_n:=\frac{\bar{r}_n}{R_n}=\bar{r}_n\to+\infty\quad \text{as}~~n\to\infty,
	\end{align}
	(recall $\bar{r}_n$ is defined in \eqref{def:bar-r-n}). Note that by the standard elliptic regularity theorems, $\bm{v}_n\in C^{1,\alpha}_{\text{loc}}(\Omega_n;\mathbb{R}^k)$ for some $\alpha\in (0,1)$ and hence $\Lambda_{i,n}$ and  $H_i(\bm{v}_n,0,\cdot)$ are continuous in $[2,\hat{r}_n)$ for any fixed $n\geq 1$ and $i=1,2$. Note also that $D_n\to 0$ as $n\to\infty$. As a consequence, $\tilde{r}_n\geq 2$ is well-defined. In the following we show that this quantity is almost maximal for deriving the following monotonicity formula in a uniform way.
	\begin{proposition}\label{prop:monotonicity-formula-v}
		There exist $C_1,C_2>0$ independent of $n$ such that for every $n\geq 1$, one has $J_{i,n}(r)\geq C_1$ for any $i=1,2$ and any $r\in [2,\tilde{r}_n)$; moreover, the function
		$$r\mapsto \frac{J_{1,n}(r)J_{2,n}(r)}{r^4}e^{-C_2M_{1,2;n}^{-1/(2p+2)}r^{-1/(p+1)}+C_2D_n\int_{2}^{r}tH_n(t)^{-1/2}dt}$$
		is monotone non-decreasing for $r\in [2,\tilde{r}_n)$, where $p:=\max\{p_1,p_2\}\geq 1$ and $H_n(t)=\min_{i\in \{1,2\}}H_i(\bm{v}_n,0,t)$.
	\end{proposition}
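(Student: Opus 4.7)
The plan is to apply Theorem \ref{thm:ACF-formula} directly to $\bm{v}_n$ at $x_0 = 0$ on the interval $[2, \tilde{r}_n)$, with $d' = D_n$, $M_{ij} = M_{i,j;n}$, and $\mu = C_0$ (the constant from \eqref{eq:H-12-big-C}). Observing the algebraic identity $(M_{i,j;n} r^2)^{-1/(2p+2)} = M_{i,j;n}^{-1/(2p+2)} r^{-1/(p+1)}$, the monotone quantity produced by Theorem \ref{thm:ACF-formula} coincides exactly with the one in the statement of Proposition \ref{prop:monotonicity-formula-v}, and I will take $C_2$ to be the constant $C(\kappa, C_0, N, p_1, p_2)$ produced there, which is manifestly independent of $n$.

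The hypotheses (h0)-(h3) of Theorem \ref{thm:ACF-formula} are essentially built into the definition \eqref{def:tilde-r-n} of $\tilde{r}_n$. Hypothesis (h0) follows from $B_{\tilde{r}_n}(0) \subseteq B_{\hat{r}_n} = \Omega_n' \subseteq \Omega_n$; hypothesis (h1) with $\mu = C_0$ and hypothesis (h3) are among the defining conditions of $\tilde{r}_n$; and $\Lambda_{i,n}(r) > 0$, which is the $\Lambda$-part of (h2), is likewise imposed in \eqref{def:tilde-r-n}. The only piece of (h2) that is not literally built in is the strict positivity $J_{i,n}(r) > 0$. This would follow from Lemma \ref{lem:interior-lip-J-Lambda} (positivity at $r=2$) together with the observation that on $[2, \tilde{r}_n)$,
\begin{align*}
  \frac{d}{dr} J_{i,n}(r) = r^{2-N}\int_{\partial B_r}\Bigl(|\nabla v_{i,n}|^2 + v_{i,n}^{p_i+1}\sum_{j \neq i} a_{ij} M_{i,j;n} v_{j,n}^{p_j} - D_n v_{i,n}\Bigr) \geq \frac{\Lambda_{i,n}(r)\, H_i(\bm{v}_n, 0, r)}{r} \geq 0,
\end{align*}
where I discarded $|\partial_\nu v_{i,n}|^2 \geq 0$ and used the definition of $\Lambda_{i,n}$; hence $J_{i,n}$ is non-decreasing on $[2, \tilde{r}_n)$ and stays positive.

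For the uniform lower bound $J_{i,n}(r) \geq C_1$, the monotonicity established above, combined with the positivity at $r=2$ from Lemma \ref{lem:interior-lip-J-Lambda}, first yields $J_{1,n}(r) J_{2,n}(r)/r^4 \geq c_0 > 0$ on $[2, \tilde{r}_n)$, provided one checks that the exponential correction is uniformly bounded above and below: the $D_n$-integral is at most $(N-2)^2/4$ by construction of $\tilde{r}_n$, and the term $C_2 M_{1,2;n}^{-1/(2p+2)} r^{-1/(p+1)}$ is bounded on $r \geq 2$ by virtue of the uniform lower bound on $M_{1,2;n}$ from Lemma \ref{lem:new-u-n-property}(6). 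To turn this product estimate into an individual estimate, I would apply Lemma \ref{lem:est:J-i(r)}(2) inside $\Omega_n'$ with $L = 128$ (Lemma \ref{lem:pre-property-v-n}(5)): since $v_{i,n}(0) \leq 1$ and $v_{i,n}$ is $128$-Lipschitz in $\Omega_n'$, one has $H_i(\bm{v}_n, 0, r)/r^2 \leq C$ for all $r \geq 2$, and therefore $J_{i,n}(r) \leq C r^2$. Combining the two bounds gives $J_{i,n}(r) \geq c_0 r^4 / (C r^2) \geq 4 c_0 / C =: C_1$, as required. The only real point of care is precisely this last passage from the product bound to individual bounds, which crucially relies on the uniform Lipschitz bound furnished by the blow-up construction; the rest of the proof is essentially a transparent application of Theorem \ref{thm:ACF-formula} to the sequence packaged by $\tilde{r}_n$.
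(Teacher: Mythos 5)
Your proposal is correct and takes essentially the same approach as the paper: verify the hypotheses of Theorem \ref{thm:ACF-formula} from the construction of $\tilde r_n$, establish $J_{i,n}>0$ via the derivative inequality $\frac{d}{dr}J_{i,n}(r)\geq r^{-1}\Lambda_{i,n}(r)H_i(\bm{v}_n,0,r)\geq 0$, and invoke the theorem. The one place where you do more work than necessary is the uniform lower bound $J_{i,n}(r)\geq C_1$: you bootstrap the ACF product $J_{1,n}J_{2,n}/r^4\geq c_0$ from the monotonicity formula, then split it into individual bounds via Lemma \ref{lem:est:J-i(r)}(2) and the $128$-Lipschitz bound on $\bm{v}_n$. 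This is valid, but note that Lemma \ref{lem:interior-lip-J-Lambda} already furnishes a uniform-in-$n$ lower bound $J_{i,n}(2)\geq C$ (not merely strict positivity), and the monotonicity of $r\mapsto J_{i,n}(r)$ that you established for the (h2) check then yields $J_{i,n}(r)\geq J_{i,n}(2)\geq C$ on $[2,\tilde r_n)$ at once---this is the paper's one-line derivation. Your detour is harmless but buys nothing over what you already had in hand.
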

	\begin{proof}
		For $i=1,2$, since by the defintion of $\tilde{r}_n$,
		\begin{align*}
			\frac{d}{dr}J_{i,n}(r)
			&=r^{2-N}\int_{\partial B_r}\left(|\nabla v_{i,n}|^2+v_{i,n}^{p_i+1}\sum_{j\neq i}^ka_{ij}M_{i,j;n}v_{j,n}^{p_j}-D_nv_{i,n}\right)\\
			&\geq r^{-N}\Lambda_{i,n}(r)\int_{\partial B_r}v_{i,n}^2>0
		\end{align*}
		for any $r\in [2,\tilde{r}_n)$, we see that $J_{i,n}$ is monotone increasing in $[2,\tilde{r}_n)$. Combining this with Lemma \ref{lem:interior-lip-J-Lambda} yields that there exists $C_1>0$ independent of $n$ such that $J_{i,n}(r)\geq C_1$ for any $r\in [2,\tilde{r}_n)$. On the other hand, the defintion of $\tilde{r}_n$ allows us to  apply Theorem \ref{thm:ACF-formula} to deduce that there exists $C_2>0$ independent of $n$ such that the function
		$$r\mapsto \frac{J_{1,n}(r)J_{2,n}(r)}{r^4}e^{-C_2M_{1,2;n}^{-1/(2p+2)}r^{-1/(p+1)}+C_2D_n\int_{2}^{r}tH_n(t)^{-1/2}dt}$$
		is monotone non-decreasing for $r\in [2,\tilde{r}_n)$. The proof is finished.
	\end{proof}
	\par 
	Finally, we derive that in this interval $[2,\tilde{r}_n]$, one can find a point $s_n$ such that the quantity $H_1(\bm{v}_n,0,s_n)H_2(\bm{v}_n,0,s_n)s_n^{-4}$ tends to zero as $n\to\infty$. 
	\begin{remark}\label{rmk:uniform-bdd}
		Note that due to the uniform boundedness of $\{\nabla\bm{v}_n\}$ in $\Omega_n'$, we have that $H_i(\bm{v}_n,0,r)r^{-2}$ is uniformly bounded for $r\in [2,\tilde{r}_n]$ and $n\geq 1$, where $i=1,2$.
	\end{remark}
	
	\begin{lemma}\label{lem:H-1-2-divide-s-4-to0}
		There exists $s_n\in [2,\tilde{r}_n]$ such that up to a subsequence, there holds
		$$\lim\limits_{n\to\infty}\frac{H_1(\bm{v}_n,0,s_n)H_2(\bm{v}_n,0,s_n)}{s_n^4}=0.$$
	\end{lemma}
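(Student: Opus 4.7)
The plan is to argue by contradiction. Suppose, along a subsequence, that there exists $\varepsilon>0$ with $H_1(\bm{v}_n, 0, r)\, H_2(\bm{v}_n, 0, r)/r^4 \geq \varepsilon$ for all $r \in [2, \tilde r_n]$ and all $n$ large. Combining this with the uniform upper bound $H_i(\bm{v}_n,0,r)\le C_1 r^2$ from Remark \ref{rmk:uniform-bdd} yields the two-sided comparison $c_0 r^2 \le H_i(\bm{v}_n, 0, r) \le C_1 r^2$ on $[2,\tilde r_n]$, with $c_0 := \varepsilon/C_1>0$.

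Under this two-sided bound the exponential weight in Proposition \ref{prop:monotonicity-formula-v} stays in a compact subset of $(0,\infty)$ uniformly in $n$ and $r\in[2,\tilde r_n]$, since $H_n(t)\ge c_0 t^2$ controls $\int_2^r t H_n(t)^{-1/2}\,dt$ by $(r-2)/c_0^{1/2}$ (whose $D_n$-multiple is already bounded by $(N-2)^2/4$ from the defining condition of $\tilde r_n$), while $M_{1,2;n}^{-1/(2p+2)}$ is bounded above by Lemma \ref{lem:new-u-n-property}(6). Combined with the initial positivity $J_{i,n}(2)\ge C$ from Lemma \ref{lem:interior-lip-J-Lambda}, the monotonicity in Proposition \ref{prop:monotonicity-formula-v} yields the uniform lower bound $J_{1,n}(r)\, J_{2,n}(r)/r^4 \ge c_2>0$ throughout $[2,\tilde r_n]$, while the Lipschitz estimate of Lemma \ref{lem:est:J-i(r)}(2) combined with $H_i\le C_1 r^2$ supplies the matching upper bound $J_{1,n}(r)\, J_{2,n}(r)/r^4\le C_3^2$.

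The lower bound $H_i\ge c_0 r^2$ rules out saturation of the condition $H_i\ge C_0$ in the definition of $\tilde r_n$, so along a further subsequence either the $\Lambda_i>0$ condition, or one of the two $D_n$-conditions saturates at $r=\tilde r_n$, or else $\tilde r_n=\hat r_n$; in each case $\tilde r_n\to+\infty$. I would then perform a secondary blow-up at the scale $\tilde r_n$ by setting $w_{i,n}(x):=v_{i,n}(\tilde r_n x)/\tilde r_n$. This sequence is uniformly Lipschitz with $c_0\le H_i(\bm{w}_n,0,1)\le C_1$; the trapped bounds on $J_1 J_2/r^4$ together with Lemma \ref{lem:est:J-i(r)}(3) force the Poincar\'e deficit $\gamma(\Lambda_{1,n})+\gamma(\Lambda_{2,n})-2$ to be small in an averaged sense along the monotonicity scales, so that the $C_{\mathrm{loc}}$-limit $\bm{w}_n\to\bm{w}$ realizes equality in the ACF formula. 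By the equality case of the non-variational Poincar\'e Lemma \ref{lem:poincare-sphere-LV}, $\bm{w}$ is then $1$-homogeneous with $w_1,w_2$ supported on complementary cones and linear inside each sector. But the normalization $w_{1,n}(0)=v_{1,n}(0)/\tilde r_n\to 0$ together with $|\nabla w_{1,n}(0)|=|\nabla v_{1,n}(0)|\to 1$ from Lemma \ref{lem:pre-property-v-n}(5) is incompatible with such a $1$-homogeneous profile, providing the desired contradiction.

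The main obstacle is precisely the passage to the $1$-homogeneous limit in the last step. In the absence of an Almgren-type monotonicity (unavailable because of the asymmetric and nonhomogeneous competition term $\bm{\hat g}$), homogeneity must be extracted purely from the integrated ACF equality together with the equality case of the non-variational spherical Poincar\'e Lemma \ref{lem:poincare-sphere-LV}. Since Lemma \ref{lem:poincare-sphere-LV} was obtained via a perturbation argument rather than by a direct Euler-Lagrange analysis, characterizing its equality case requires a dedicated stability argument tailored to the Lotka-Volterra structure, which appears to be the technical heart of the proof.
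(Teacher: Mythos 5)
Your argument parts ways with the paper's at the crucial step, and you have correctly diagnosed that your route cannot be completed as written. The paper's proof never performs a secondary blow-up and never invokes the equality case of the Alt--Caffarelli--Friedman formula. Under the contradiction hypothesis $H_i(\bm{v}_n,0,s)/s^2\geq C$ on $[2,\tilde r_n]$, the paper shows directly (Steps 1--4) that \emph{none} of the four constraints in the definition \eqref{def:tilde-r-n} of $\tilde r_n$ saturates at $r=\tilde r_n$ for $n$ large: $\tilde r_n/\hat r_n\to 0$, the two $D_n$-quantities tend to $0$, $\Lambda_{i,n}(\tilde r_n)\geq\varepsilon_0'>0$, and $H_i(\bm{v}_n,0,\tilde r_n)\to+\infty$. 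By continuity one can then enlarge the interval to $[2,\tilde r_n+\varepsilon_n]$ on which all constraints still hold, contradicting the fact that $\tilde r_n$ was a supremum. The trapping of $J_{1,n}J_{2,n}/r^4$, which is the centerpiece of your argument, is not used inside Lemma \ref{lem:H-1-2-divide-s-4-to0} at all; it is only used afterwards, in the conclusion of the proof of Theorem \ref{thm:interior-lip}, where the lemma's output $s_n$ is fed into the monotone quantity.

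Beyond the acknowledged gap at the end, your intermediate step is also not sound. You argue that under the trapping hypothesis ``either the $\Lambda_i>0$ condition, or one of the two $D_n$-conditions saturates at $r=\tilde r_n$, or else $\tilde r_n=\hat r_n$; in each case $\tilde r_n\to\infty$.'' But the paper's Steps~2 and~3 show precisely that the $\Lambda_i$ and $D_n$ constraints do \emph{not} saturate, Step~1 shows $\tilde r_n=\hat r_n$ cannot occur, and Step~4 rules out saturation of $H_i\geq C_0$. The correct reading is not that one of these cases forces $\tilde r_n\to\infty$ and hence licenses a blow-up; it is that none of them occurs, which already yields the contradiction without any limiting profile. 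Your plan to extract $1$-homogeneity from ACF equality together with the equality case of the non-variational spherical Poincar\'e inequality is exactly the kind of Almgren-type rigidity that, as emphasized in the introduction and in \cite{Terracini-V-Z2019CPAM}, is unavailable in the asymmetric setting; there is no known characterization of equality in Lemma~\ref{lem:poincare-sphere-LV}, and the perturbation construction of its minimizers provides no such information. The paper's proof deliberately sidesteps this by making the contradiction purely about the supremum $\tilde r_n$ being extendable.
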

	\begin{proof}
		We argue by contradiction. By Remark \ref{rmk:uniform-bdd}, we can suppose that there exists $C>0$ independent of $n$ such that
		\begin{align}\label{eq:H1-H2-lower-bdd}
			\frac{H_1(\bm{v}_n,0,s)}{s^2}\geq C,\quad \frac{H_2(\bm{v}_n,0,s)}{s^2}\geq C,\quad \forall s\in [2,\tilde{r}_n]
		\end{align}
		for large $n$. We divide  the proof into four steps.
		\par
		\textbf{Step 1.} Up to a subsequence,
		$\lim\limits_{n\to\infty}\frac{\tilde{r}_n}{\hat{r}_n}=0$, where $\hat{r}_n>0$ is defined in \eqref{def:hat-r-n}. In particular, $\tilde{r}_n<\frac{1}{2}\hat{r}_n$ for any $n\geq 1$.
		\par
		Indeed, suppose by contradiction that there exists $\varepsilon_0\in (0,1)$ such that $\frac{\tilde{r}_n}{\hat{r}_n}\geq \varepsilon_0$ for any $n\geq 1$. Choose $s_n:=\varepsilon_0\hat{r}_n\in [2,\tilde{r}_n]$. Then
		\begin{align*}
			&\frac{H_1(\bm{v}_n,0,s_n)+H_2(\bm{v}_n,0,s_n)}{(s_n)^2}=\frac{H_1(\bm{v}_n,0,\varepsilon_0\bar{r}_n)+H_2(\bm{v}_n,0,\varepsilon_0\bar{r}_n)}{(\varepsilon_0\bar{r}_n)^2}\\
			=&\eta(x_n)^2\frac{H_1(u_{i,n},x_n,\varepsilon_0s_n'/2)+H_2(u_{i,n},x_n,\varepsilon_0s_n'/2)}{L_n^2(\varepsilon_0s_n'/2)^2}\\
			\leq& \frac{\bar{l}^2}{L_n^2(\varepsilon_0/2)^2}\left[H_1(u_{i,n},x_n,\varepsilon_0s_n'/2)+H_2(u_{i,n},x_n,\varepsilon_0s_n'/2)\right]\to 0\quad \text{as}~~n\to\infty,
		\end{align*}
		where $s_n':=\text{dist}(x_n,\partial B_2)$ and $\bar{l}=\|\nabla\eta\|_{L^{\infty}(B_3)}$. This contradicts \eqref{eq:H1-H2-lower-bdd}. \textbf{Step 1} is proved.
		\par
		\textbf{Step 2.} There holds  
		$$\lim\limits_{n\to\infty}D_n\tilde{r}_n^2H_i(\bm{v}_n,0,\tilde{r}_n)^{-1/2}=0,\quad \lim\limits_{n\to\infty}D_n\int_{2}^{\tilde{r}_n}tH_i(\bm{v}_n,0,t)^{-1/2}dt=0,\quad \forall i=1,2.$$
		\par 
		Indeed, since $D_n\tilde{r}_n\leq D_n/r_n\to 0$ as $n\to\infty$, by \eqref{eq:H1-H2-lower-bdd}, we deduce that
		$$D_n\tilde{r}_n^2H_i(\bm{v}_n,0,\tilde{r}_n)^{-1/2}\leq \frac{D_n}{r_n}C^{-1/2}\to 0,$$
		and 
		$$D_n\int_{2}^{\tilde{r}_n}tH_i(\bm{v}_n,0,t)^{-1/2}dt\leq D_nC^{-1/2}(\tilde{r}_n-2)\leq D_nC^{-1/2}(1/r_n-2)\to 0$$
		as $n\to\infty$, where $i=1,2$.
		\par 
		\textbf{Step 3.} There exists $\varepsilon_0'>0$ independent of $n$ such that
		\begin{align*}
			\Lambda_{1,n}(\tilde{r}_n)\geq \varepsilon_0',\quad \Lambda_{2,n}(\tilde{r}_n)\geq \varepsilon_0'\quad \text{for $n$ large.}
		\end{align*}
		\par
		Indeed, let us define
		\begin{align*}
			\tilde{v}_{1,n}(x):=\frac{1}{\sqrt{H_1(\bm{v}_n,0,\tilde{r}_n)}}v_{1,n}(\tilde{r}_nx),\quad \tilde{v}_{2,n}(x):=\frac{1}{\sqrt{H_2(\bm{v}_n,0,\tilde{r}_n)}}v_{2,n}(\tilde{r}_nx),
		\end{align*}
		where $x\in \frac{1}{\tilde{r}_n}\Omega_n$. By Lemma \ref{lem:new-u-n-property}, \textbf{Step 1} and the fact that $B_{\hat{r}_n}\subseteq \Omega_n$, we see that $(\tilde{v}_{1,n},\tilde{v}_{2,n})$ is well-defined in $B_2$ and is a weak solution to
		\begin{align*}
			\left\{
			\begin{array}{ll}
				-\Delta \tilde{v}_{1,n}&\leq D_n\frac{\tilde{r}_n^2}{\sqrt{H_1(\bm{v}_n,0,\tilde{r}_n)}}- 	\widetilde{M}_{1,2;n}\tilde{v}_{1,n}^{p_1}\tilde{v}_{2,n}^{p_2}\quad\text{in}~~B_2,\\
				-\Delta \tilde{v}_{2,n}&\leq D_n\frac{\tilde{r}_n^2}{\sqrt{H_2(\bm{v}_n,0,\tilde{r}_n)}}-	\widetilde{M}_{2,1;n}\tilde{v}_{2,n}^{p_2}\tilde{v}_{1,n}^{p_1}\quad\text{in}~~B_2,
			\end{array}\right.
		\end{align*}
		where 
		\begin{align*}
			\widetilde{M}_{1,2;n}&:=a_{12}M_{1,2;n}\tilde{r}_n^2H_1(\bm{v}_n,0,\tilde{r}_n)^{(p_1-1)/2}H_2(\bm{v}_n,0,\tilde{r}_n)^{p_2/2},\\
			\widetilde{M}_{2,1;n}&:=a_{21}M_{1,2;n}\tilde{r}_n^2H_2(\bm{v}_n,0,\tilde{r}_n)^{(p_2-1)/2}H_1(\bm{v}_n,0,\tilde{r}_n)^{p_1/2}.
		\end{align*}
		On the one hand, by \textbf{Step 2}, we have 
		$$D_n\frac{\tilde{r}_n^2}{\sqrt{H_i(\bm{v}_n,0,\tilde{r}_n)}}\to 0\quad \text{as}~~n\to\infty,\quad \forall i=1,2.$$
		Moreover, by the continuity of $H_i(\bm{v}_n,0,\cdot)$, the definition of $\tilde{r}_n$ and Lemma \ref{lem:new-u-n-property} (6), it follows that 
		\begin{align*}
			\widetilde{M}_{1,2;n}\geq C,\quad 	\widetilde{M}_{2,1;n}\geq C
		\end{align*}
		for some constant $C>0$ independent of $n$. On the other hand, by Lemma \ref{lem:new-u-n-property} (1), (3) and \eqref{eq:H1-H2-lower-bdd}, it is easy to see that $\{(\tilde{v}_{1,n},\tilde{v}_{2,n})\}$ is bounded in $[W^{1,\infty}(B_2)]^2$. Combining these with the fact $\int_{\partial B_1}\tilde{v}_{i,n}^2=1(i=1,2)$, we deduce from Lemma \ref{lem:property-global-lip-sol} (3) that
		$$\liminf_{n\to\infty}\int_{\partial B_1}|\nabla_{\theta}\tilde{v}_{1,n}|^2+	\widetilde{M}_{1,2;n} \tilde{v}_{1,n}^{p_1+1}\tilde{v}_{2,n}^{p_2}>0,\quad \liminf_{n\to\infty}\int_{\partial B_1}|\nabla_{\theta}\tilde{v}_{2,n}|^2+	\widetilde{M}_{2,1;n} \tilde{v}_{2,n}^{p_2+1}\tilde{v}_{1,n}^{p_1}>0.$$
		As a consequence of H\"{o}lder inequality and \textbf{Step 2}, it follows that
		\begin{align*}
			\liminf_{n\to\infty}\Lambda_{1,n}(\tilde{r}_n)
			&=\liminf_{n\to\infty}\frac{\tilde{r}_n^2\int_{\partial B_{\tilde{r}_n}}|\nabla_{\theta}v_{1,n}|^2+v_{1,n}^{p_1+1}\sum_{j\neq i}^ka_{1j}M_{1,j;n}v_{j,n}^{p_j}-D_nv_{1,n}}{\int_{\partial B_{\tilde{r}_n}}v_{1,n}^2}\\
			&\geq \liminf_{n\to\infty}\frac{\tilde{r}_n^2\int_{\partial B_{\tilde{r}_n}}|\nabla_{\theta}v_{1,n}|^2+v_{1,n}^{p_1+1}a_{12}M_{1,2;n}v_{2,n}^{p_2}-D_nv_{1,n}}{\int_{\partial B_{\tilde{r}_n}}v_{1,n}^2}\\
			&\geq \liminf_{n\to\infty}\int_{\partial B_1}|\nabla_{\theta}\tilde{v}_{1,n}|^2+	\widetilde{M}_{1,2;n} \tilde{v}_{1,n}^{p_1+1}\tilde{v}_{2,n}^{p_2}-\limsup_{n\to\infty}\sigma_{N-1}^{1/2}D_n\frac{\tilde{r}_n^2}{\sqrt{H_1(\bm{v}_n,0,\tilde{r}_n)}}\\
			&>0.
		\end{align*}
		Similarly, $\liminf_{n\to\infty}\Lambda_{2,n}(\tilde{r}_n)>0$. Thus, \textbf{Step 3} is proved.
		\par 
		\textbf{Step 4.} There holds $\lim\limits_{n\to\infty}\tilde{r}_n=+\infty$. In particular, it follows from \eqref{eq:H1-H2-lower-bdd} that $\lim\limits_{n\to\infty}H_i(\bm{v}_n,0,\tilde{r}_n)=+\infty$ for $i=1,2$.
		\par 
		Once this is proved, then by \textbf{Steps 1--3} and the continuity of $\Lambda_{i,n}$ and $H_i(\bm{v}_n,0,\cdot)$, we have for large $n\geq 1$, there exists $\varepsilon_n>0$ such that $\tilde{r}_n+\varepsilon_n<\hat{r}_n$, and $\Lambda_{i,n}(r)>0$, $H_i(\bm{v}_n,0,r)\geq C_0$,
		$$D_n\int_{2}^{r}tH_n(t)^{-1/2}dt\leq (N-2)^2/4,\quad D_nr^2\sigma_{N-1}^{1/2}H_i(\bm{v}_n,0,r)^{-1/2}\leq (N-2)^2/4$$
		for any $r\in [2,\tilde{r}_n+\varepsilon_n]$ and $i=1,2$, contradicting to the definition of $\tilde{r}_n$.
		\par 
		It remains to prove  \textbf{Step 4}. We argue by contradiction. Suppose that up to a subsequence, there exists $C>0$ such that $\tilde{r}_n\leq C$ for any $n\geq 1$. Recall that $\tilde{l}_n\to +\infty$ and $\hat{r}_n\to +\infty$ as $n\to\infty$, by \eqref{eq:H-12-big-C}, we can deduce that up to a subsequence, 
		$$\tilde{r}_n+1<\hat{r}_n,\quad H_i(\bm{v}_n,0,r)\geq C_0,\quad \forall r\in [2,\tilde{r}_n+1],\quad\forall n\geq 1,\forall i=1,2.$$
		This together with \textbf{Steps 2,3} yields that for any $n\geq 1$, there exists $\varepsilon_n>0$ such that $\tilde{r}_n+\varepsilon_n<\hat{r}_n$, and $\Lambda_{i,n}(r)>0$, $H_i(\bm{v}_n,0,r)\geq C_0$,
		$$D_n\int_{2}^{r}tH_n(t)^{-1/2}dt\leq (N-2)^2/4,\quad D_nr^2\sigma_{N-1}^{1/2}H_i(\bm{v}_n,0,r)^{-1/2}\leq (N-2)^2/4$$
		for any $r\in [2,\tilde{r}_n+\varepsilon_n]$ and $i=1,2$, contradicting to the definition of $\tilde{r}_n$. Hence, 	\textbf{Step 4} is proved.
		\par 
		We have finished the proof of Lemma \ref{lem:H-1-2-divide-s-4-to0}.
	\end{proof}
	\par 
	Now we are in position to complete the proof of Theorem \ref{thm:interior-lip}.
	\begin{proof}[Conclusion of the proof of Theorem \ref{thm:interior-lip}]
		Observe that by Lemma \ref{lem:new-u-n-property} and the definition of $\tilde{r}_n$, there exist $0<C_1'<C_2'<+\infty$ independent of $n$ such that
		$$C_1'\leq e^{-C_2M_{1,2;n}^{-1/(2p+2)}r^{-1/(p+1)}+C_2D_n\int_{2}^{r}tH_n(t)^{-1/2}dt}\leq C_2',\quad \forall r\in [2,\tilde{r}_n],\quad \forall n\geq 1.$$
		On the one hand, by Lemma \ref{lem:interior-lip-J-Lambda}, we have 
		$$\frac{J_{1,n}(2)J_{2,n}(2)}{2^4}e^{-C_2M_{1,2;n}^{-1/(2p+2)}2^{-1/(p+1)}+C_2D_n\int_{2}^{2}tH_n(t)^{-1/2}dt}\geq C'',\quad \forall n\geq 1,$$
		where $C''>0$  is a constant independent of $n$.
		On the other hand, it follows from Lemmas \ref{lem:new-u-n-property} (3), \ref{lem:est:J-i(r)} (2) and \ref{lem:H-1-2-divide-s-4-to0} that
		\begin{align*}
			&\frac{J_{1,n}(s_n)J_{2,n}(s_n)}{s_n^4}e^{-C_2M_{1,2;n}^{-1/(2p+2)}s_n^{-1/(p+1)}+C_2D_n\int_{2}^{s_n}tH_n(t)^{-1/2}dt}
			\leq C_2'\frac{J_{1,n}(s_n)J_{2,n}(s_n)}{s_n^4}\\
			\leq& C_2'\left\{128\sigma_{N-1}^{1/2}\left[\frac{H_1(\bm{v}_n,0,s_n)}{s_n^2}\right]^{1/2}+\frac{N-2}{2}\frac{H_1(\bm{v}_n,0,s_n)}{s_n^2}\right\}\\
			&\cdot\left\{128\sigma_{N-1}^{1/2}\left[\frac{H_2(\bm{v}_n,0,s_n)}{s_n^2}\right]^{1/2}+\frac{N-2}{2}\frac{H_2(\bm{v}_n,0,s_n)}{s_n^2}\right\}\rightarrow 0\quad \text{as}~~n\to\infty.
		\end{align*}
		Therefore, we deduce from Proposition \ref{prop:monotonicity-formula-v} that $0<C''\to 0$ as $n\to\infty$, which is a contradiction. This finishes the proof of Theorem \ref{thm:interior-lip}.
	\end{proof}
	
	\section{Uniform global Lipschitz bounds}\label{sec:5}
	\setcounter{section}{5}
	\setcounter{equation}{0}
	
	In this section, we prove Theorem \ref{thm:global-lip}. The proof relies on blow-up analysis. Specifically, we demonstrate how to apply Theorem \ref{thm:interior-lip} to derive such a uniform global result. To this end, we first need to construct a suitable blow-up sequence and then carefully analyze the behavior of this sequence near the boundary of the domain.
	\par 
	Before proving Theorem \ref{thm:global-lip}, we need to establish the following result regarding the boundary behavior of solutions to system \eqref{Sys:main-interior-general}.
	\begin{lemma}\label{lem:boundary-est}
		Under the assumptions of Theorem \ref{thm:global-lip}, there exists $M_1>0$ independent of $n$ such that for any $x\in \overline{\Omega}$ and $y\in \partial\Omega$ with $x\neq y$, one has 
		$$\frac{|u_{i,n}(x)-u_{i,n}(y)|}{|x-y|}\leq M_1,\quad \forall 1\leq i\leq k.$$
	\end{lemma}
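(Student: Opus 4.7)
The plan is to split the bound into the two one-sided estimates $u_{i,n}(x)-\varphi_{i,n}(y)\leq M_1|x-y|$ and $\varphi_{i,n}(y)-u_{i,n}(x)\leq M_1|x-y|$. The upper inequality is cheap and follows from a global comparison: since $-\Delta u_{i,n}\leq f_{i,n}(\cdot,u_{i,n})\leq d_m$ (the competition term being non-positive), the maximum principle yields $u_{i,n}\leq w_{i,n}$ in $\overline\Omega$, where $w_{i,n}$ solves $-\Delta w=d_m$ in $\Omega$ with $w=\varphi_{i,n}$ on $\partial\Omega$. Because $\partial\Omega$ is $C^2$ and $\varphi_{i,n}$ has uniformly bounded $C^{1,\mathcal{A}}$-norm by (H2), classical boundary $C^{1,\mathrm{Dini}}$-estimates for Poisson equations give a uniform $C^1(\overline\Omega)$-bound on $w_{i,n}$, whence $u_{i,n}(x)-u_{i,n}(y)\leq w_{i,n}(x)-w_{i,n}(y)\leq C|x-y|$ for every $y\in\partial\Omega$.

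For the lower inequality, the case $\varphi_{i,n}(y)=0$ is trivial from $u_{i,n}\geq 0$, so suppose $a:=\varphi_{i,n}(y)>0$. The uniform Lipschitz bound on $\varphi_{i,n}$ and the disjoint-support condition \eqref{Condi:boundary-disjoint} produce a radius $\rho_0\geq c_0 a$ (with $c_0$ depending only on $\bar m$) such that $\varphi_{i,n}\geq a/2$ on $\partial\Omega\cap B_{\rho_0}(y)$ while $\varphi_{j,n}\equiv 0$ there for every $j\neq i$. I will push this through in three substeps. First, a soft boundary-continuity estimate (coming from the uniform $C^1$-bound on $w_{i,n}$ together with $u_{i,n}\leq w_{i,n}$ for the upper side, and a contradiction-and-compactness argument in the spirit of Conti--Terracini--Verzini for the lower side) provides a uniform neighbourhood $B_r(y)\cap\Omega$ with $r$ depending only on $a$ and the data, on which $u_{i,n}\geq a/4$. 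Second, extend each $u_{j,n}$ ($j\neq i$) by zero across $\partial\Omega\cap B_{\rho_0}(y)$; since $u_{j,n}=\varphi_{j,n}\equiv 0$ on this piece and $u_{j,n}\geq 0$ inside, an integration-by-parts computation shows the extension $\tilde u_{j,n}$ satisfies, distributionally on $B_r(y)$,
\[
-\Delta \tilde u_{j,n}\;\leq\;d_m\,-\,a_{ji}\,\beta_{ij;n}\,(a/4)^{p_i}\,\tilde u_{j,n}^{p_j}.
\]
Lemma \ref{lem:decay-Mu-p} then yields a constant $C_1$, independent of $n$, such that $\beta_{ij;n} u_{j,n}^{p_j}\leq C_1$ on $B_{r/2}(y)\cap\Omega$. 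Third, with this uniform bound, the equation for $u_{i,n}$ reduces to $|\Delta u_{i,n}|\leq C_2$ on $B_{r/2}(y)\cap\Omega$ with $C^{1,\mathrm{Dini}}$ boundary data of bounded norm; standard boundary $C^{1,\mathrm{Dini}}$-estimates then deliver a uniform $C^1$-bound on $u_{i,n}$ up to $\partial\Omega$ near $y$, which is exactly the required $\varphi_{i,n}(y)-u_{i,n}(x)\leq M_1|x-y|$.

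The main obstacle is Substep 1, the preliminary uniform lower bound $u_{i,n}\geq a/4$ on a fixed-size half-ball: one cannot apply a direct comparison argument because the competition term spoils any supersolution property of $u_{i,n}$ with bounded right-hand side. I propose to bypass it by a contradiction argument: if no such uniform $r$ existed, one could extract a blow-up sequence centred at boundary points $y_n$ with $\varphi_{i,n}(y_n)\geq a>0$ on which $u_{i,n}$ drops to $a/4$ at distances $r_n\to 0$; rescaling by $r_n$ and using the already-established upper bound $u_{i,n}\leq w_{i,n}$ for the competitors (hence extension by zero is admissible), Lemma \ref{lem:decay-Mu-p} on small balls shows that the competition contribution to $u_{i,n}$'s equation vanishes in the limit, so the limit profile is a non-negative subharmonic function on a half-space which equals $a$ on the flat boundary yet drops below $a/2$ at a unit distance, contradicting the strong maximum principle. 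Threading this contradiction argument through carefully, while keeping all constants uniform in $n$ and $y$, is the technical heart of the lemma.
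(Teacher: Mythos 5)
Your upper one-sided estimate is correct and is essentially what the paper does (the paper compares $u_{i,n}$ to $\Phi_{i,n}$ solving $-\Delta\Phi_{i,n}=f_{i,n}(\cdot,u_{i,n})$ with $\Phi_{i,n}=\varphi_{i,n}$ on $\partial\Omega$, rather than to the barrier with constant right-hand side $d_m$, but that difference is immaterial). For the lower bound, however, the paper uses a far shorter argument that your proposal misses, and your proposed substitute contains a genuine gap. The paper's trick is the auxiliary function $\widehat{u}_{i,n}=u_{i,n}-\sum_{j\neq i}\frac{a_{ij}}{a_{ji}}u_{j,n}$ from \eqref{def:hat-operator}: the competition terms cancel algebraically in this linear combination (using $\beta_{ij;n}=\beta_{ji;n}$), so $-\Delta\widehat{u}_{i,n}\geq f_{i,n}-\sum_{j\neq i}\frac{a_{ij}}{a_{ji}}f_{j,n}$ has uniformly bounded right-hand side, and by comparison with $\Psi_{i,n}:=\Phi_{i,n}-\sum_{j\neq i}\frac{a_{ij}}{a_{ji}}\Phi_{j,n}$ one gets $\Psi_{i,n}\leq\widehat{u}_{i,n}\leq u_{i,n}$ in $\Omega$. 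When $\varphi_{i,n}(y)>0$, condition \eqref{Condi:boundary-disjoint} forces $\Psi_{i,n}(y)=\widehat{u}_{i,n}(y)=u_{i,n}(y)=\varphi_{i,n}(y)$, so the two-sided Lipschitz difference-quotient bound at $y$ follows immediately from the uniform $C^1(\overline{\Omega})$ bounds on $\Psi_{i,n}$ and $\Phi_{i,n}$, with no analysis of the competition terms, no preliminary positivity of $u_{i,n}$, and no blow-up.

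Your Substep 1, the uniform lower bound $u_{i,n}\geq a/4$ on a fixed-size half-ball, is exactly where the proposal breaks down. To invoke Lemma \ref{lem:decay-Mu-p} for a competitor $u_{j,n}$ you must write $-\Delta u_{j,n}\leq d_m-\beta_{ij;n}a_{ji}\bigl(\inf u_{i,n}^{p_i}\bigr)\,u_{j,n}^{p_j}$, and the constant $M$ entering that lemma is precisely $\beta_{ij;n}a_{ji}\inf u_{i,n}^{p_i}$; thus the decay of the competitors presupposes the very lower bound on $u_{i,n}$ you are trying to prove, so the reasoning is circular. The blow-up contradiction you sketch does not escape the circle: after rescaling by $r_n$, the competition contribution to the equation for $u_{i,n}$ is of order $r_n^2\,\beta_{ij;n}\,u_{j,n}^{p_j}(y_n+r_n\,\cdot\,)$, and the barrier bound $u_{j,n}\leq w_{j,n}\lesssim\mathrm{dist}(\cdot,\partial\Omega)$ only yields $u_{j,n}(y_n+r_nx)\lesssim r_n|x|$, hence a competition term of size $r_n^{2+p_j}\beta_{ij;n}$, which need not be small since $\beta_{ij;n}\to+\infty$ may diverge arbitrarily fast relative to $r_n\to0$. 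You correctly flag this step as the technical heart, but the argument as sketched does not close it; the paper's $\widehat{u}$ cancellation is the missing idea that removes the need for any such preliminary positivity.
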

	\begin{proof}
		For $1\leq i\leq k$, we denote by $\Phi_{i,n}\in H^1_{\text{loc}}(\Omega)\cap C(\overline{\Omega})$ the solution of 
		\begin{align*}
			-\Delta w=f_{i,n}(x,u_{i,n})\quad \text{in}~~\Omega,\quad w=\varphi_{i,n}\quad \text{on}~~\partial\Omega.
		\end{align*}
		Then by (H1), (H2), and the standard elliptic regularity theorems (see e.g. \cite{Gilbarg-Trudinger2001-book,Borsuk1998}), we have that $\Phi_{i,n}\in C^1(\overline{\Omega})$; moreover, there exists a constant $C_1>0$ independent of $n$ such that $\|\Phi_{i,n}\|_{C^1(\overline{\Omega})}\leq C_1$. Since $-\Delta u_{i,n}\leq f_{i,n}(x,u_{i,n})$ in $\Omega$ and $u_{i,n}=\varphi_{i,n}$ on $\partial\Omega$, the weak comparison principle yields that $\Phi_{i,n}\geq u_{i,n}$ in $\Omega$. On the other hand, note that 
		$$-\Delta \widehat{u}_{i,n}\geq f_{i,n}(x,u_{i,n})-\sum_{j\neq i}\frac{a_{ij}}{a_{ji}}f_{j,n}(x,u_{j,n})=-\Delta\Psi_{i,n}\quad \text{in}~~\Omega,$$
		where $\widehat{u}_{i,n}=u_{i,n}-\sum_{j\neq i}\frac{a_{ij}}{a_{ji}}u_{j,n}$ and $\Psi_{i,n}:=\Phi_{i,n}-\sum_{j\neq i}\frac{a_{ij}}{a_{ji}}\Phi_{j,n}$. By the comparison principle again, it follows that $\Psi_{i,n}\leq \widehat{u}_{i,n}\leq u_{i,n}$ in $\Omega$.
		\par 
		Now, we denote $M_1:=C_1+\sum_{j\neq i}\frac{a_{ij}}{a_{ji}}C_1>0$. Let $x\in \overline{\Omega}$ and $y\in \partial\Omega$ with $x\neq y$. If $\varphi_{i,n}(y)=0$, then 
		$$\frac{|u_{i,n}(x)-u_{i,n}(y)|}{|x-y|}=\frac{u_{i,n}(x)}{|x-y|}\leq \frac{\Phi_{i,n}(x)-\Phi_{i,n}(y)}{|x-y|}\leq C_1\leq M_1;$$
		if $\varphi_{i,n}(y)>0$, then by \eqref{Condi:boundary-disjoint}, $\Psi_{i,n}(y)=\varphi_{i,n}(y)-\sum_{j\neq i}\frac{a_{ij}}{a_{ji}}\varphi_{j,n}(y)=\varphi_{i,n}(y)$, hence,
		\begin{align*}
			\frac{\Psi_{i,n}(x)-\Psi_{i,n}(y)}{|x-y|}\leq \frac{u_{i,n}(x)-u_{i,n}(y)}{|x-y|}\leq \frac{\Phi_{i,n}(x)-\Phi_{i,n}(y)}{|x-y|}.
		\end{align*}
		As a consequence, we derive that $\frac{|u_{i,n}(x)-u_{i,n}(y)|}{|x-y|}\leq M_1$. The proof is finished.
	\end{proof}
	\par 
	We aim to prove Theorem \ref{thm:global-lip} via blow-up analysis. Consider a sequence of weak solutions $\{\bm{u}_n\}$ to system \eqref{Sys:main-interior-general}-\eqref{Boundary:main} that satisfies the assumptions of Theorem \ref{thm:global-lip}. Assume, by contradiction, that the conclusion of Theorem \ref{thm:global-lip} fails for $\{\bm{u}_n\}$. Then there exists a subsequence of $\{\bm{u}_n\}$ (still denoted by $\{\bm{u}_n\}$) such that
	\begin{align*}
		W_n := \max_{1 \leq i \leq k} \sup_{x \in \Omega} |\nabla u_{i,n}(x)| \to +\infty \quad \text{as } n \to \infty.
	\end{align*}
	Up to relabelling, we may assume that there exists a sequence $\{x_n\} \subseteq \Omega$ such that
	\begin{align*}
		L_n := |\nabla u_{1,n}(x_n)| \in \left[ \frac{n}{n+1} W_n, W_n \right] \quad \text{for all } n \geq 1.
	\end{align*}
	It then follows that $L_n \to +\infty$ as $n \to \infty$.
	\par 
	For any $n\geq 1$, let us define 
	\begin{align}
		v_{i,n}(x):=\frac{1}{\bar{r}_n}u_{i,n}(x_n+\bar{r}_nx),\quad \forall x\in \overline{\Omega_n},\quad \forall 1\leq i\leq k,
	\end{align}
	where $\Omega_n:=\frac{\Omega-x_n}{\bar{r}_n}$ and 
	\begin{align}\label{def:bar-r-n-bdd}
		\bar{r}_n:=\text{dist}(x_n,\partial\Omega)>0;
	\end{align}
	moreover, since $\partial\Omega\neq \emptyset$, we can also fix a point $y_n\in \partial\Omega$ such that $|y_n-x_n|=\bar{r}_n$. Then by \eqref{Sys:main-interior-general}, $\{\bm{v}_n\}$ solves the equations
	\begin{align}\label{Sys:main-interior-general-1}
		-\Delta v_{i,n}=\bar{r}_n f_{i,n}(x_n+\bar{r}_nx, u_{i,n}(x_n+\bar{r}_nx)) - v_{i,n}^{p_i} \sum_{j\neq i}^k a_{ij} \beta_{ij;n}\bar{r}_n^{1+p_i+p_j}v_{j,n}^{p_j},\quad  v_{i,n}>0\quad \text{in}~~ \Omega_n,
	\end{align}
	where $1\leq i\leq k$ and $n\geq 1$.
	\par
	First of all, by Lemma \ref{lem:boundary-est} and Theorem \ref{thm:interior-lip}, we deduce that  $\sum_{i=1}^{k}v_{i,n}(0)$ tends to infinity as $n\to\infty$.
	\begin{lemma}\label{lem:bahaviour-global}
		Under the previous notation, we have 
		$$\sum_{i=1}^{k}v_{i,n}(0)=\frac{\sum_{i=1}^{k}u_{i,n}(x_n)}{\bar{r}_n}\to +\infty\quad \text{as}~~n\to\infty.$$
	\end{lemma}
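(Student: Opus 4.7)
The plan is to argue by contradiction: suppose, up to a subsequence, that $\sum_{i=1}^k v_{i,n}(0) \leq C_0$ for some constant $C_0$ independent of $n$. I would then build a uniform $L^{\infty}$ bound on $\bm{v}_n$ in a fixed ball $B_{1/2}(0) \subset \Omega_n$ and invoke Theorem \ref{thm:interior-lip} to conclude $|\nabla v_{1,n}(0)|$ stays bounded, which contradicts $|\nabla v_{1,n}(0)| = |\nabla u_{1,n}(x_n)| = L_n \to +\infty$.

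The crucial step is the $L^{\infty}$ bound. From the contradiction hypothesis we have $u_{i,n}(x_n) \leq C_0 \bar{r}_n$ for every $i$. Applying Lemma \ref{lem:boundary-est} at the fixed boundary point $y_n \in \partial\Omega$ (with $|y_n - x_n| = \bar{r}_n$) gives
$$u_{i,n}(y_n) \leq u_{i,n}(x_n) + M_1 \bar{r}_n \leq (C_0 + M_1)\bar{r}_n.$$
Since $\mathrm{dist}(0,\partial\Omega_n) = 1$, for any $\tilde{x} \in B_{1/2}(0)$ the point $x_n + \bar{r}_n \tilde{x} \in \Omega$ satisfies $|x_n + \bar{r}_n \tilde{x} - y_n| \leq \tfrac{3}{2}\bar{r}_n$, so Lemma \ref{lem:boundary-est} applied again yields
$$u_{i,n}(x_n + \bar{r}_n \tilde{x}) \leq M_1 \cdot \tfrac{3}{2}\bar{r}_n + (C_0 + M_1)\bar{r}_n \leq C' \bar{r}_n,$$
for some $C' > 0$ independent of $n$. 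Dividing by $\bar{r}_n$, we obtain $v_{i,n}(\tilde{x}) \leq C'$ uniformly in $B_{1/2}(0)$.

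Next, I would verify that Theorem \ref{thm:interior-lip} applies to $\bm{v}_n$ on $B_{1/2}(0)$. The sequence $\bm{v}_n$ weakly solves system \eqref{Sys:main-interior-general-1}, which has the same structure as \eqref{Sys:main-interior-general} with symmetric coefficients $\tilde{\beta}_{ij;n} := \beta_{ij;n}\bar{r}_n^{1+p_i+p_j} > 0$ and internal reaction $\tilde{f}_{i,n}(x,s) := \bar{r}_n f_{i,n}(x_n + \bar{r}_n x, s)$. Since $\bar{r}_n \leq \mathrm{diam}(\Omega)$ and $\{u_{i,n}\}$ is uniformly bounded by $m$, the pointwise bound $|\tilde{f}_{i,n}| \leq \mathrm{diam}(\Omega) \cdot d_m$ shows that $(\tilde{f}_{i,n})$ still satisfies (H1) uniformly in $n$ (at least on the range of values taken by $\bm{v}_n$ in $B_{1/2}$). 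Together with the uniform $L^{\infty}$ bound from the previous step, Theorem \ref{thm:interior-lip} on the compact set $\overline{B_{1/4}(0)} \Subset B_{1/2}(0)$ gives a constant $M > 0$ independent of $n$ with $\|\nabla \bm{v}_n\|_{L^{\infty}(B_{1/4})} \leq M$. This contradicts $|\nabla v_{1,n}(0)| = L_n \to +\infty$ and closes the argument.

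The main obstacle is precisely the $L^{\infty}$ bound on $\bm{v}_n$ near the origin. Without the boundary Lipschitz estimate of Lemma \ref{lem:boundary-est}, a naive bound $\|v_{i,n}\|_{L^{\infty}} \leq m/\bar{r}_n$ would blow up when $\bar{r}_n \to 0$; the key insight is that the contradiction hypothesis $u_{i,n}(x_n) \leq C_0\bar{r}_n$ forces the boundary data near $y_n$ to be of the same order $O(\bar{r}_n)$, so that after rescaling, $v_{i,n}$ remains bounded on a unit-scale neighborhood of $0$. This is the reason the "global" case reduces, via the geometric position of $y_n$, to an application of the "interior" result.
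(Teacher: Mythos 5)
Your proposal is correct and follows essentially the same route as the paper: assume by contradiction that $\sum_i v_{i,n}(0)$ stays bounded, combine the boundary Lipschitz estimate of Lemma~\ref{lem:boundary-est} (applied through the fixed point $y_n\in\partial\Omega$) with the contradiction hypothesis to obtain a uniform $L^\infty$ bound on $\bm{v}_n$ in a fixed ball around the origin, and then invoke Theorem~\ref{thm:interior-lip} to bound $|\nabla v_{1,n}(0)|=|\nabla u_{1,n}(x_n)|=L_n$, which contradicts $L_n\to+\infty$. The only cosmetic difference is that the paper works on $B_1$ and estimates directly via the triangle inequality on the $v_{i,n}$, while you work on $B_{1/2}$ and first pass through $u_{i,n}(y_n)$; also your definition of $\tilde f_{i,n}$ should have $\bar r_n s$ rather than $s$ in the second argument, a harmless slip that does not affect the (H1) verification.
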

	\begin{proof}
		Suppose by contradiction that up to a subsequence, $\sum_{i=1}^{k}v_{i,n}(0)\leq C$ for any $n\geq 1$, where $C>0$  is a constant independent of $n$. Notice that $\overline{B_{\bar{r}_n}(x_n)}\subseteq \overline{\Omega}$, we have $\overline{B_1}\subseteq \overline{\Omega_n}$. Then for any $x\in\overline{B_1}$, we deduce from Lemma \ref{lem:boundary-est} that
		\begin{align*}
			|v_{i,n}(x)|
			&\leq |v_{i,n}(x)-v_{i,n}((y_n-x_n)/\bar{r}_n)|+|v_{i,n}(0)-v_{i,n}((y_n-x_n)/\bar{r}_n)|+|v_{i,n}(0)|\\
			&\leq 3M_1+C,\quad \forall 1\leq i\leq k,~~\forall n\geq 1,
		\end{align*}
		where $M_1>0$ is independent of $n$. This implies that $\{\bm{v}_n\}$ is uniformly bounded in $L^{\infty}(B_1;\mathbb{R}^k)$. Combining this with \eqref{Sys:main-interior-general-1}, it follows from Theorem \ref{thm:interior-lip} that $\{|\nabla \bm{v}_n(0)|\}$ is bounded in $\mathbb{R}$, that is, $\{|\nabla \bm{u}_n(x_n)|\}$ is bounded in $\mathbb{R}$, contradicting to the fact that $L_n = |\nabla u_{1,n}(x_n)|\to +\infty$ as $n\to\infty$. The proof is finished.
	\end{proof}
	\par 
	Next, note that Lemma \ref{lem:bahaviour-global} implies that the sequence $\{\bm{v}_n\}$ diverges to infinity at $x = 0$ and that $\bar{r}_n \to 0$ as $n \to \infty$. Using these results together with Lemmas \ref{lem:bahaviour-global} and \ref{lem:decay-Mu-p}, we can further analyze the behavior of $\{\bm{v}_n\}$.
	\begin{lemma}\label{lem:bahaviour-global-1}
		Under the previous notation, up to a subsequence, the following assertions hold:\\
		(1) There exists $1\leq i_0\leq k$ such that $v_{i_0,n}(0)\to +\infty$ as $n\to\infty$; moreover, 
		$$0<\frac{1}{2}v_{i_0,n}(0)\leq v_{i_0,n}(x)\leq \frac{3}{2}v_{i_0,n}(0),\quad \forall x\in \overline{B_5}\cap \overline{\Omega_n},\quad \forall n\geq 1.$$
		(2) There exists a constant $C>0$ independent of $n$ such that for any $1\leq j\leq k$ with $j\neq i_0$, one has 
		$$0\leq v_{j,n}(x)\leq C,\quad \forall x\in \overline{B_5}\cap \overline{\Omega_n},\quad\text{and}\quad v_{j,n}(x)=0,\quad \forall x\in \overline{B_5}\cap \partial\Omega_n.$$
		(3) There exists a constant $C>0$ independent of $n$ such that 
		$$v_{i_0,n}^{p_{i_0}} \sum_{j \neq i_0}^k \beta_{ji_0;n}\bar{r}_n^{1+p_{i_0}+p_j}v_{j,n}^{p_j}=v_{i_0,n}^{p_{i_0}} \sum_{j \neq i_0}^k \beta_{i_0j;n}\bar{r}_n^{1+p_{i_0}+p_j}v_{j,n}^{p_j}\leq C\quad \text{in}~~B_1.$$
	\end{lemma}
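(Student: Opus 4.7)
The plan is to propagate the boundary Lipschitz bound of Lemma \ref{lem:boundary-est} from the single boundary point $y_n^\ast := (y_n - x_n)/\bar{r}_n \in \partial\Omega_n \cap \partial B_1$ into the whole ball $\overline{B_5}$, and then combine this information with the disjoint-support condition \eqref{Condi:boundary-disjoint} and Lemma \ref{lem:decay-Mu-p}. Rescaling Lemma \ref{lem:boundary-est} yields the key inequality $|v_{i,n}(a) - v_{i,n}(b)| \leq M_1 |a - b|$, valid for all $1 \leq i \leq k$ whenever at least one of $a,b$ lies in $\partial\Omega_n$.

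For (1), I would pick $i_0 \in \{1,\ldots,k\}$ (fixed along a subsequence) so that $v_{i_0,n}(0) = \max_{1 \leq i \leq k} v_{i,n}(0)$; by Lemma \ref{lem:bahaviour-global} this gives $v_{i_0,n}(0) \geq \tfrac{1}{k}\sum_i v_{i,n}(0) \to +\infty$. For any $x \in \overline{B_5} \cap \overline{\Omega_n}$, two applications of the rescaled boundary Lipschitz bound (first between $x$ and $y_n^\ast$, then between $y_n^\ast$ and $0$) yield $|v_{i_0,n}(x) - v_{i_0,n}(0)| \leq 7 M_1$, which implies (1) as soon as $v_{i_0,n}(0) \geq 14 M_1$. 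For the second assertion of (2), any $y \in \overline{B_5} \cap \partial\Omega_n$ satisfies $v_{i_0,n}(y) \geq v_{i_0,n}(0)/2 > 0$ by (1); writing $y = (y' - x_n)/\bar{r}_n$ with $y' \in \partial\Omega$ gives $\varphi_{i_0,n}(y') = \bar{r}_n v_{i_0,n}(y) > 0$, so condition \eqref{Condi:boundary-disjoint} forces $\varphi_{j,n}(y') = 0$, i.e.\ $v_{j,n}(y) = 0$ for all $j \neq i_0$. The uniform bound $v_{j,n} \leq 6 M_1$ in $\overline{B_5} \cap \overline{\Omega_n}$ then follows from one more rescaled application of Lemma \ref{lem:boundary-est} through $y_n^\ast$, using $v_{j,n}(y_n^\ast) = 0$.

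Part (3) is the main obstacle, since it requires transferring a comparison-type decay estimate across the free boundary $\partial\Omega_n \cap B_5$. I would exploit the vanishing from (2) by extending each $v_{j,n}$ ($j \neq i_0$) by zero outside $\Omega_n$ inside $B_5$. The standard fact that the zero extension of a non-negative subsolution with vanishing trace remains a subsolution gives that $\tilde{v}_{j,n} \in H^1(B_5) \cap C(\overline{B_5})$ satisfies
\[
-\Delta \tilde{v}_{j,n} \leq \bar{r}_n d_m - M_{j,n}\, \tilde{v}_{j,n}^{p_j} \quad \text{in } B_5,
\]
where $M_{j,n} := a_{j i_0}\, \beta_{j i_0;n}\, \bar{r}_n^{1+p_j+p_{i_0}}\, (v_{i_0,n}(0)/2)^{p_{i_0}}$ thanks to the lower bound $v_{i_0,n} \geq v_{i_0,n}(0)/2$ from (1). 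By the first part of (2), $\tilde{v}_{j,n} \leq 6 M_1$ on $\partial B_4$, so Lemma \ref{lem:decay-Mu-p} applied on $B_4$ with $\rho = 2$ yields a constant $C > 0$ independent of $n$ with $M_{j,n}\, \tilde{v}_{j,n}^{p_j} \leq C$ in $B_2$. Multiplying by $3^{p_{i_0}}$ converts $(v_{i_0,n}(0)/2)^{p_{i_0}}$ into an upper bound for $v_{i_0,n}^{p_{i_0}}$, and summing the resulting estimates over $j \neq i_0$ (using $\beta_{i_0 j;n} = \beta_{j i_0;n}$) delivers the desired uniform bound on $B_1$.
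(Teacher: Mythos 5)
Your proposal is correct and takes essentially the same route as the paper: choose $i_0$ so that $v_{i_0,n}(0)\to\infty$, propagate the rescaled boundary Lipschitz bound of Lemma \ref{lem:boundary-est} through the point $y_n^\ast$, use \eqref{Condi:boundary-disjoint} to get vanishing of the other components on $\overline{B_5}\cap\partial\Omega_n$, and then extend $v_{j,n}$ by zero to obtain a subsolution on a full ball to which Lemma \ref{lem:decay-Mu-p} applies. The only cosmetic differences are that you work on $B_5/B_4/B_2$ while the paper works on $B_2/B_1$, and that you invoke the zero-extension subsolution property as a standard fact while the paper verifies it directly via the sign of $\partial_\nu v_{j,n}$ on $\partial\Omega_n$.
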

	\begin{proof}
		(1) The existence of $i_0$ such that $\limsup\limits_{n\to\infty}v_{i_0,n}(0)=+\infty$ follows immediately from Lemma \ref{lem:bahaviour-global}. On the other hand, by Lemma \ref{lem:boundary-est}, we deduce that $\forall x\in \overline{B_5}\cap \overline{\Omega_n}$,
		\begin{align*}
			|v_{i_0,n}(x)-v_{i_0,n}(0)|
			&\leq \frac{1}{\bar{r}_n}|u_{i_0,n}(x_n+\bar{r}_nx)-u_{i_0,n}(y_n)|+\frac{1}{\bar{r}_n}|u_{i_0,n}(x_n)-u_{i_0,n}(y_n)|\\
			&\leq 6M_1+M_1=7M_1,
		\end{align*}
		where $M_1>0$ is independent of $n$. As a consequence, it follows that 
		$$\sup_{x\in \overline{B_5}\cap \overline{\Omega_n}}|v_{i_0,n}(x)-v_{i_0,n}(0)|\leq \frac{1}{2}v_{i_0,n}(0)\quad \text{for $n$ large},$$ 
		as desired.
		\par 
		(2) Let $1\leq j\leq k$ with $j\neq i_0$ and $n\geq 1$. By \eqref{Condi:boundary-disjoint} and Point (1), we deduce that $v_{j,n}(x)=0$ for any $x\in \overline{B_5}\cap \partial\Omega_n$. Combining this with Lemma \ref{lem:boundary-est} yields that 
		$$0\leq v_{j,n}(x)=|v_{j,n}(x)-v_{j,n}((y_n-x_n)/\bar{r}_n)|\leq 6M_1=C,\quad \forall x\in \overline{B_5}\cap \overline{\Omega_n}.$$
		\par 
		(3) Let $n\geq 1$. We extend $\bm{v}_n$ to $\overline{B_2}$ by defining 
		$$v_{j,n}\equiv 0\quad\text{in}~~\overline{B_2}\setminus\Omega_n\quad \forall j\neq i_0,\quad \text{and}\quad v_{i_0,n}\equiv v_{i_0,n}(0)\quad\text{in}~~\overline{B_2}\setminus\Omega_n.$$
		\par 
		Let $j \neq i_0$. By Point (2), we have that $v_{j,n} \in C(\overline{B_2}) \cap H^1(B_2)$. Moreover, since $v_{j,n} \equiv 0$ on $B_2 \cap \partial\Omega_n$ and $v_{j,n} \geq 0$ in $B_2 \cap \Omega_n$, we deduce that $\partial_{\nu} v_{j,n} \leq 0$ on $B_2 \cap \partial\Omega_n$, where $\nu(x)$ is the unit outer normal vector to $\Omega_n$ at a point $x \in \partial\Omega_n$.
		\par 
		We claim that for $j\neq i_0$, $v_{j,n}$ weakly solves 
		$$-\Delta v_{j,n}\leq d_m\bar{r}_n-[v_{i_0,n}(0)/2]^{p_{i_0}} a_{ji_0} \beta_{ji_0;n}\bar{r}_n^{1+p_{i_0}+p_j}v_{j,n}^{p_j},\quad v_{j,n}\geq 0,\quad \text{in}~~B_2.$$
		\par 
		Indeed, for any $\eta\in C_{c}^{\infty}(B_2)$ with $\eta\geq 0$ in $B_2$, by \eqref{Sys:main-interior-general-1}, (H1) and Point (1), we derive 
		\begin{align*}
			\int_{B_2}\nabla v_{j,n}\cdot \nabla \eta
			&=	\int_{B_2\cap \Omega_n}\nabla v_{j,n}\cdot \nabla \eta\leq \int_{B_2\cap \Omega_n}\nabla v_{j,n}\cdot \nabla \eta-\int_{B_2\cap \partial\Omega_n}\eta\partial_{\nu}v_{j,n}\\
			&\leq \int_{B_2\cap \Omega_n}\left\{d_m\bar{r}_n-[v_{i_0,n}(0)/2]^{p_{i_0}} a_{ji_0} \beta_{ji_0;n}\bar{r}_n^{1+p_{i_0}+p_j}v_{j,n}^{p_j}\right\}\eta\\
			&\leq \int_{B_2}\left\{d_m\bar{r}_n-[v_{i_0,n}(0)/2]^{p_{i_0}} a_{ji_0} \beta_{ji_0;n}\bar{r}_n^{1+p_{i_0}+p_j}v_{j,n}^{p_j}\right\}\eta,
		\end{align*}
		as desired. The claim is proved.
		\par 
		With the claim and Point (2) in hand, it follows from Lemma \ref{lem:decay-Mu-p} that 
		$$[v_{i_0,n}(0)/2]^{p_{i_0}} a_{ji_0} \beta_{ji_0;n}\bar{r}_n^{1+p_{i_0}+p_j}v_{j,n}^{p_j}\leq C\quad \text{in}~~B_1,$$
		where $j\neq i_0$ and $C>0$ is independent of $n$. Since $\beta_{ji_0;n}=\beta_{i_0j;n}$, we then deduce from Point (1) that
		$$v_{i_0,n}^{p_{i_0}} \sum_{j\neq i_0}^k \beta_{i_0j;n}\bar{r}_n^{1+p_{i_0}+p_j}v_{j,n}^{p_j}\leq [3/2v_{i_0,n}(0)]^{p_{i_0}} \sum_{j\neq i_0}^k \beta_{ji_0;n}\bar{r}_n^{1+p_{i_0}+p_j}v_{j,n}^{p_j}\leq C\quad \text{in}~~B_1,$$
		where  $C>0$ is independent of $n$.  The proof is finished.
	\end{proof}
	Now we are ready to complete the proof of Theorem \ref{thm:global-lip}.
	\begin{proof}[Conclusion of the proof of Theorem \ref{thm:global-lip}]
		On the one hand, by Lemma \ref{lem:bahaviour-global-1} (3) and \eqref{Sys:main-interior-general-1}, we see that $\{\Delta [v_{i_0,n}-v_{i_0,n}(0)]\}$ is bounded in $L^{\infty}(B_1)$. Moreover, it can be easily deduced from Lemma \ref{lem:boundary-est} that $\{[v_{i_0,n}-v_{i_0,n}(0)]\}$ is  bounded in $L^{\infty}(B_1)$ as well. By the standard elliptic estimates, it then follows that $\{\nabla u_{i_0,n}(x_n)\}$ is bounded in $\mathbb{R}$. On the other hand, for any $i\neq i_0$, we know from \eqref{Sys:main-interior-general-1} that $v_{i,n}$ is a solution of 
		$$-\Delta v_{i,n}=\bar{f}_{i,n}(x)-v_{i,n}^{p_i} \sum_{j \notin\{i,i_0\}}^k a_{ij} \beta_{ij;n}\bar{r}_n^{1+p_i+p_j}v_{j,n}^{p_j}\quad \text{in}~~B_1,$$
		where 
		$$\bar{f}_{i,n}(x):=\bar{r}_n f_{i,n}(x_n+\bar{r}_nx, u_{i,n}(x_n+\bar{r}_nx))-v_{i,n}^{p_i} a_{ii_0} \beta_{ii_0;n}\bar{r}_n^{1+p_i+p_{i_0}}v_{i_0,n}^{p_{i_0}}.$$
		By Lemma \ref{lem:bahaviour-global-1} (2) and (3), we have that $\{v_{i,n}\}$ and $\{\bar{f}_{i,n}\}$ are uniformly bounded in $L^{\infty}(B_1)$ for any $i\neq i_0$. This allows us to use Theorem \ref{thm:interior-lip} to derive $\{\nabla u_{i,n}(x_n)\}$ is bounded in $\mathbb{R}$ for any $i\neq i_0$. As a consequence, $\{\nabla\bm{u}_n(x_n)\}$ is bounded in $\mathbb{R}$, contradicting to $L_n=|\nabla u_{1,n}(x_n)|\to +\infty$ as $n\to\infty$. The proof is complete.
	\end{proof}

	\section{Asymptotic behavior}\label{sec:6}
	\setcounter{section}{6}
	\setcounter{equation}{0}
	
	This section is devoted to analyzing the asymptotic behavior of solutions to system \eqref{Sys:main-interior} as $\beta_n \to +\infty$. Propositions \ref{prop:uniform-upper-bdd}, \ref{prop:existence-xn}, and Theorems \ref{thm:ab-near-regular}, \ref{thm:ap-near-gamma-n} are established herein.
	
	\subsection{Uniform upper bound}
	
	First, we use Theorem \ref{thm:interior-lip} to prove Proposition \ref{prop:uniform-upper-bdd}. The proof can be modified along the lines of \cite[Theorem 1.1]{Soave-Z2017Poincare}. However, we need to account for the nonhomogeneity of $\bm{\hat{g}}$. 
	\begin{proof}[Proof of Proposition \ref{prop:uniform-upper-bdd}]
		Suppose, for contradiction, that there exist a sequence $\{x_n\} \subseteq K$ and indices $i \neq j$ such that 
		\begin{align}\label{eq:Ab-2}
			\beta_{ij;n} u_{i,n}(x_n)^{p_i + 1} u_{j,n}(x_n)^{p_j} \to +\infty \quad \text{as } n \to \infty.
		\end{align}
		Without loss of generality, we assume $i = 1$ and $j = 2$. Furthermore, by the compactness of $K$, we may pass to a subsequence (still denoted by $\{x_n\}$ for simplicity) and assume $x_n \to x_0$ for some $x_0 \in \Omega$ as $n \to \infty$.
		
		We claim that $\varepsilon_n := u_{1,n}(x_n) + u_{2,n}(x_n) \to 0$ as $n \to \infty$.
		
		Indeed, if this were not the case, we could assume without loss of generality that $u_{1,n}(x_n) \geq c > 0$ for all $n \geq 1$, where $c > 0$ is a constant independent of $n$. By Theorem \ref{thm:interior-lip} and the Ascoli-Arzelà theorem, there exists $\delta > 0$ independent of $n$ such that $u_{1,n} \geq c/2$ in $B_{2\delta}(x_0)$. As a consequence of \eqref{Sys:main-interior-general}, we have 
		\[
		-\Delta u_{2,n} \leq d_m - \left(\frac{c}{2}\right)^{p_1} a_{21} \beta_{21;n} u_{2,n}^{p_2} \quad \text{in } B_{2\delta}(x_0).
		\]
		By Lemma \ref{lem:decay-Mu-p}, it follows that $\beta_{21;n} u_{2,n}^{p_2} \leq C$ in $B_{\delta}(x_0)$ for some constant $C > 0$ independent of $n$. Since $\beta_{21;n} = \beta_{12;n}$ and $\{\bm{u}_n\}$ is uniformly bounded in $L^{\infty}(\Omega; \mathbb{R}^k)$, this implies that $\{\beta_{12;n} u_{1,n}(x_n)^{p_1 + 1} u_{2,n}(x_n)^{p_2}\}$ is bounded in $\mathbb{R}$, contradicting to \eqref{eq:Ab-2}. The claim is thus proved.
		
		Now let $0 < r < \text{dist}(K, \partial\Omega)$ and define 
		\[
		v_{i,n}(x) := \frac{1}{\varepsilon_n} u_{i,n}(x_n + \varepsilon_n x) \quad \forall x \in B_{r/\varepsilon_n}, ~\forall 1 \leq i \leq k.
		\]
		By Theorem \ref{thm:interior-lip} and the Ascoli-Arzelà theorem, we may pass to a subsequence (still denoted by $\{\bm{v}_n\}$) such that $\{(v_{1,n}, v_{2,n})\}$ converges to a function $(v_1, v_2)$ in $C_{\text{loc}}(\mathbb{R}^N; \mathbb{R}^2)$ as $n \to \infty$. Moreover, $\{\bm{v}_n\}$ satisfies the system
		\begin{align}\label{Eq:Ab}
			-\Delta v_{i,n} = \varepsilon_n f_{i,n}\left(x_n + \varepsilon_n x, u_{i,n}(x_n + \varepsilon_n x)\right) - v_{i,n}^{p_i} \sum_{j \neq i} a_{ij} \beta_{ij;n} \varepsilon_n^{p_i + p_j + 1} v_{j,n}^{p_j}
		\end{align}
		for all $1 \leq i \leq k$ and in $B_{r/\varepsilon_n}$. Since $\sum_{i=1}^2 v_{i,n}(0) = 1$ for all $n$, we may assume without loss of generality that $v_1(0) > 0$. Combining this with the fact that $\beta_{12;n} \varepsilon_n^{p_1 + p_2 + 1} \geq \beta_{12;n} u_{1,n}(x_n)^{p_1 + 1} u_{2,n}(x_n)^{p_2} \to +\infty$ as $n \to \infty$, we can argue as in the proof of the above claim to deduce that $\{\beta_{12;n} \varepsilon_n^{p_1 + p_2 + 1} v_{2,n}(0)^{p_2}\}$ is bounded in $\mathbb{R}$. By the definitions of $\varepsilon_n$ and $\bm{v}_n$, this implies that $\{\beta_{12;n} u_{1,n}(x_n)^{p_1 + 1} u_{2,n}(x_n)^{p_2}\}$ is bounded in $\mathbb{R}$, a contradiction. The proof is therefore completed.
	\end{proof}
	
	\subsection{Asymptotic behavior near regular points}
	Next, we investigate the asymptotic behavior of solutions $\{\bm{u}_n\}$ to system \eqref{Sys:main-interior} near regular points. We borrow ideas from \cite{Caffarelli-K-L2009fixpoint, Dancer-WZ2012Trans} to prove Theorem \ref{thm:ab-near-regular}. The key observation is the following decay estimate, which constitutes a stronger version of Lemma \ref{lem:decay-Mu-p}.
	\begin{lemma}\label{lem:decay-Mu-p-general}
		Let $N\geq 1$, $x_0\in \mathbb{R}^N$ and $M,\rho>0$. Let  $u\in H^1(B_{2\rho}(x_0))\cap C(\overline{B_{2\rho}(x_0)})$ be a non-negative subsolution to $-\Delta u\leq -Mu^p$ in $B_{2\rho}(x_0)$ for some $p>1$. Then there exists a constant $C>0$ depending only on $N$ and $p$ such that 
		$$u(x)\leq CM^{-\frac{1}{p-1}}\rho^{-\frac{2}{p-1}},\quad \forall x\in B_{\rho}(x_0).$$
	\end{lemma}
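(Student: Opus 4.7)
The plan is to construct an explicit radial super-solution barrier and then invoke a weak comparison argument; the condition $p>1$ is precisely what makes this work. First I would normalize by setting $\tilde{u}(y):=u(x_0+\rho y)$ for $y\in B_2$, which satisfies $-\Delta\tilde{u}\le -\tilde{M}\tilde{u}^p$ in $B_2$ with $\tilde{M}:=M\rho^2$. It therefore suffices to prove a bound of the form $\tilde{u}(y)\le C(N,p)\tilde{M}^{-1/(p-1)}$ for $y\in B_1$, since unscaling recovers $u(x)\le CM^{-1/(p-1)}\rho^{-2/(p-1)}$ on $B_\rho(x_0)$.

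For the barrier I would take the ansatz $w(x):=A(4-|x|^2)^{-\alpha}$ on $B_2$ and select the critical exponent $\alpha:=2/(p-1)$, which is the unique choice making the most singular terms of $\Delta w$ and $\tilde{M}w^p$ agree at the boundary. A direct computation gives
\begin{align*}
\Delta w=2NA\alpha(4-|x|^2)^{-\alpha-1}+4A\alpha(\alpha+1)|x|^2(4-|x|^2)^{-\alpha-2},
\end{align*}
so after multiplying the desired inequality $\Delta w\le \tilde{M}w^p=\tilde{M}A^p(4-|x|^2)^{-\alpha-2}$ by $(4-|x|^2)^{\alpha+2}$, the requirement reduces to
\begin{align*}
2NA\alpha(4-|x|^2)+4A\alpha(\alpha+1)|x|^2\le \tilde{M}A^p\quad\text{in }B_2,
\end{align*}
which holds provided $A^{p-1}\ge C_0(N,p)/\tilde{M}$. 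Choosing $A:=C_1(N,p)\tilde{M}^{-1/(p-1)}$ with $C_1$ large enough then yields $-\Delta w\ge -\tilde{M}w^p$ pointwise in $B_2$, while $w\to+\infty$ as $|x|\to 2^-$.

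Next I would compare $\tilde{u}$ and $w$. Because $\tilde{u}$ is continuous on $\overline{B_2}$ and $w$ blows up on $\partial B_2$, the function $\phi:=(\tilde{u}-w)^+$ is compactly supported in $B_2$ and hence belongs to $H^1_0(B_2)$, making it a valid test function. Inserting $\phi$ into the weak forms of the two differential inequalities and subtracting gives
\begin{align*}
\int_{B_2}|\nabla\phi|^2\le -\tilde{M}\int_{B_2}(\tilde{u}^p-w^p)\phi\le 0,
\end{align*}
where the final inequality uses that $t\mapsto t^p$ is non-decreasing on $[0,\infty)$, so $(\tilde{u}^p-w^p)\phi\ge 0$ on $\{\tilde{u}>w\}$. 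Hence $\phi\equiv 0$, i.e., $\tilde{u}\le w$ throughout $B_2$. Restricting to $y\in B_1$, where $4-|y|^2\ge 3$, yields $\tilde{u}(y)\le 3^{-\alpha}A=C(N,p)\tilde{M}^{-1/(p-1)}$, which unscales to the claimed estimate.

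I do not anticipate a serious obstacle: the only delicate point is that subtracting the two weak inequalities and using $(\tilde{u}-w)^+$ as a test function is legitimate, which is guaranteed by the boundary blow-up of $w$. The exponent $\alpha=2/(p-1)$ is forced by the scaling invariance of the extremal case and dictates both the barrier shape and the final decay rate; the strict inequality $p>1$ is essential, since for $p=1$ no such $\alpha$ is available, consistent with the qualitatively different, boundary-data-dependent conclusion of Lemma \ref{lem:decay-Mu-p}.
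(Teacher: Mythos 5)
Your proposal is correct and follows essentially the same strategy as the paper: both construct the Brezis-type blow-up barrier $A(R^2-|x|^2)^{-2/(p-1)}$, observe that the exponent $\alpha=2/(p-1)$ makes $\Delta w$ and $\tilde M w^p$ balance, and use the boundary blow-up of the barrier to deduce comparison without any boundary data on $u$. The only difference is in how the comparison is effected: the paper invokes Kato's inequality together with the maximum principle to conclude $(v-V)^+\equiv 0$, whereas you insert the test function $\phi=(\tilde u-w)^+\in H^1_0(B_2)$ directly into the subtracted weak inequalities and observe $\int|\nabla\phi|^2\le 0$; the two are standard equivalents, and your version is, if anything, slightly more self-contained since it does not require quoting Kato's inequality.
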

	\begin{proof}
		Without loss of generality, we assume $x_0=0$. Let 
		$$v(x):=u(M^{-1/2}x),\quad \forall x\in B_{2M^{1/2}\rho}.$$
		Then $v$ is a non-negative subsolution to $-\Delta v+v^p\leq 0$ in $B_{2M^{1/2}\rho}$.
		On the other hand, we define a comparison function (as in \cite{Brezis-AMO1984}):
		$$V(x):=\frac{C(N,p)R^{\frac{2}{p-1}}}{(R^2-|x|^2)^{\frac{2}{p-1}}},\quad \forall x\in B_R,$$
		where $R=2M^{1/2}\rho$ and $C(N,p):=\left(\frac{4}{p-1}\max\{N,2\frac{p+1}{p-1}\}\right)^{1/(p-1)}>0$. Then $V$ is a positive supsolution to
		$$-\Delta V+V^p\geq 0\quad \text{in}~~B_{R}.$$
		Note that $\lim\limits_{|x|\to R^-}V(x)=+\infty$. By Kato's inequality and the maximum principle, one can derive $(v-V)^+\equiv 0$ in $B_R$. This then implies that 
		$v\leq V$ in $B_R$. In particular, 
		$$v(x)\leq V(x)\leq \left(\frac{4}{3}\right)^{\frac{2}{p-1}}C(N,p)R^{-\frac{2}{p-1}},\quad \forall x\in B_{R/2}.$$
		Therefore, $u\leq CM^{-\frac{1}{p-1}}\rho^{-\frac{2}{p-1}}$ in $B_{\rho}$, where $C>0$ is a constant depending only on $N$ and $p$. The proof is finished.
	\end{proof}
	
	\begin{remark}\label{rmk:bdd-Mu}
		Recall (see \cite[Lemma 4.4]{Conti-Terracini-Verzini-2005adv}) that if $u\in H^1(B_{2\rho}(x_0))\cap C(\overline{B_{2\rho}(x_0)})$ is a non-negative subsolution to $-\Delta u\leq -Mu$ in $B_{2\rho}(x_0)$, then there exists a constant $C > 0$ depending only on $N$ such that
		\[
		u(x) \leq C A e^{-\frac{\sqrt{M} \rho}{2}} \quad \forall x \in B_{\rho}(x_0),
		\]
		where $A:= \max_{\partial B_{2\rho}(x_0)}u\geq 0$.
	\end{remark}
	\par 
	Equipped with this decay property, we can adapt the proof of \cite[Theorem 5.8]{Dancer-WZ2012Trans} to establish Theorem \ref{thm:ab-near-regular}. We present only the necessary modifications.
	\par 
	Assume that the assumptions of Theorem \ref{thm:ab-near-regular} hold, and let $\{\bm{u}_n\}$ and $\bm{u}$ be as in Theorem \ref{thm:ab-near-regular}. Recall that $x_0 \in \mathcal{R}_{\bm{u}}$, and that $i_1, i_2$ are the only two indices for which $u_{i_1}, u_{i_2} \not\equiv 0$ in a neighborhood of $x_0$. Without loss of generality, we assume $x_0 = 0$, $i_1 = 1$, and $i_2 = 2$.
	\par 
	Fix a small constant $h_0 > 0$, which will be determined later.
	\par 
	First, since $0\in \mathcal{R}_{\bm{u}}$, by \cite[Lemmas 9 and 10]{Caffarelli-K-L2009fixpoint}, up to a rescaling, we may assume that $B_2\subseteq \Omega$ and 
	$$-\Delta\left(u_1 - \frac{a_{12}}{a_{21}}u_2\right) = 0\quad \text{in }B_1,\quad \max_{\overline{B_1}}\left|u_1 - \frac{a_{12}}{a_{21}}u_2-e_1\cdot x\right|\leq h_0,\quad \nabla \left(u_1 - \frac{a_{12}}{a_{21}}u_2\right)(0)=e_1,$$
	where $e_1=(1,0,\dots,0)\in \partial B_1$. Note that after rescaling, $\bm{u}_n$ is a weak solution to system \eqref{Sys:main-interior-general} with $\bm{f}_n\equiv 0$ in $B_2$ and $\beta_{i,j;n}=a_0^{p_i+p_j+1}b_0^{p_i+p_j-1}\beta_n$ for all $i\neq j$, where $a_0, b_0$ are positive constants independent of $n$ (but depending on $h_0$).
	\par 
	Next, we define the iteration as follows:
	\[
	h_{l+1} = h_l^2, \quad R_0 = 1, \quad R_{l+1} = R_l - h_l^{1/2} \quad \text{for all } l \geq 0.
	\]
	If $h_0 > 0$ is sufficiently small, then $\{h_l\}$ decays to zero rapidly and $\lim_{l \to +\infty} R_l \geq 1/2$. For each $l \geq 0$, we decompose 
	\[
	u_{1,n} - \frac{a_{12}}{a_{21}} u_{2,n} = v_l + w_l \quad \text{in } B_{\widetilde{R}_l},
	\]
	where $v_l$ satisfies the boundary value problem
	\begin{align*}
		\begin{cases}
			-\Delta v_l = 0 & \text{in } B_{\widetilde{R}_l}, \\
			v_l = u_{1,n} - \frac{a_{12}}{a_{21}} u_{2,n} & \text{on } \partial B_{\widetilde{R}_l}.
		\end{cases}
	\end{align*}
	Here, $\widetilde{R}_0 := R_0 = 1$ and $\widetilde{R}_l := R_{l-1} - \frac{2}{3} h_{l-1}^{1/2}$ for $l \geq 1$.
	By induction, we can then establish the following lemma.
	\begin{lemma}\label{lem:iteration}
		Let $N\geq 2$, $p=\max_{1 \leq i \leq k}\{p_i\}>1$, $l\in \mathbb{N}$ and $h_0\in (0,1)$ small enough. Then there exist positive constants $c_0, c_1, c_2, c_3, \lambda$ independent of $n$ and $l$ such that if 
		\begin{align*}
			h_l \geq 
			\begin{cases}
				c_3\max_{p_i>1,\atop i\neq 1,2}\beta_n^{-\frac{1}{2p_i+p}}\quad &\text{if }\{3\leq i\leq k:p_i>1\}\neq \emptyset,\\
				\beta_n^{-\frac{1}{p(p+2)}}\quad &\text{otherwise},
			\end{cases}
		\end{align*} 
		then for $n$ large, we have:
		\begin{enumerate}
			\item [(1)] \( |v_l - (u_{1,n} -  \frac{a_{12}}{a_{21}}u_{2,n})| \leq 2h_l \) in $B_{R_l}$; moreover, if $l\geq 1$, then 
			\begin{align*}
				&\left|v_l - \left(u_{1,n} -  \frac{a_{12}}{a_{21}}u_{2,n}\right)\right| \\
				\leq& c_0 \sum_{p_i>1,\atop i\neq 1,2}h_{l-1}^{-\frac{p+2}{p_i-1}} \beta_n^{-\frac{1}{p_i-1}}+c_0\sum_{p_i=1,\atop i\neq 1,2}h_{l-1}e^{-c_1h_{l-1}^{(p+2)/2}\sqrt{\beta_n}}+ c_2 h_{l-1}e^{-\frac{\ln 2}{6\lambda h_{l-1}^{1/2}}}\quad\text{in }B_{R_l}.
			\end{align*}
			\item [(2)] $\sum_{i\neq 1,2} u_{i,n}\leq 2h_l$ in $B_{R_l}$; moreover, if $l\geq 1$ and $i\neq 1,2$,  then 
			\begin{align*}
				u_{i,n}\leq 
				\begin{cases}
					c_0h_{l-1}^{-\frac{p+2}{p_i-1}} \beta_n^{-\frac{1}{p_i-1}}+4h_{l-1} e^{-\frac{\ln 2}{6\lambda h_{l-1}^{1/2}}} \quad &\text{if } p_i>1,\\
					c_0h_{l-1}e^{-c_1h_{l-1}^{(p+2)/2}\sqrt{\beta_n}}+4h_{l-1}e^{-\frac{\ln 2}{6\lambda h_{l-1}^{1/2}}} \quad &\text{if } p_i=1
				\end{cases}
				\quad \text{in } B_{R_{l}}.
			\end{align*}
			\item [(3)] If $l\geq 1$, then \( |\nabla(v_{l} - v_{l-1})| \leq 7Nh_{l-1}^{\frac{1}{2}} \)  in $B_{R_{l}}$.
			\item [(4)] \( |\nabla v_l - e_1| \leq 7Nh_{0}^{\frac{1}{2}}+7N\sum_{j=1}^l h_{j-1}^{\frac{1}{2}} \leq \frac{1}{4} \) in $B_{R_{l}-h_l^{1/2}/3}$.
			\item [(5)] $\{v_l=0\}\cap B_{R_{l}-h_l^{1/2}/3}$ is a Lipschitz surface with a Lipschitz constant less than or equal to 3.
		\end{enumerate}
	\end{lemma}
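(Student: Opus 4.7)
The plan is to prove all five assertions simultaneously by strong induction on $l$, with the iteration driven by the algebraic cancellation
\begin{align*}
-\Delta\!\left(u_{1,n}-\tfrac{a_{12}}{a_{21}}u_{2,n}\right)
=-\beta_n u_{1,n}^{p_1}\!\!\sum_{j\neq 1,2}a_{1j}u_{j,n}^{p_j}
+\tfrac{a_{12}}{a_{21}}\beta_n u_{2,n}^{p_2}\!\!\sum_{j\neq 1,2}a_{2j}u_{j,n}^{p_j}\quad\text{in }B_1,
\end{align*}
in which the $j=2$ term in the first competition sum cancels the $j=1$ term in the second. Hence the right-hand side depends only on the "small" species $u_{j,n}$, $j\neq 1,2$: fast decay of those species forces $u_{1,n}-\frac{a_{12}}{a_{21}}u_{2,n}$ to be nearly harmonic, and the Dirichlet harmonic extension $v_l$ becomes an excellent approximation. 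Conversely, good control on $v_l$ (item (4)) produces a lower bound on $u_{1,n}$ or $u_{2,n}$ on a tube of width $\sim h_l^{1/2}$ around $\{v_l=0\}$, which drives the decay estimate at the next step.

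For the base case $l=0$, item (1) follows from a maximum-principle comparison, since $v_0$ is harmonic with boundary values $u_{1,n}-\frac{a_{12}}{a_{21}}u_{2,n}$ on $\partial B_1$ and the right-hand side of the identity above is $O(h_0)$ by uniform convergence $\bm{u}_n\to\bm{u}$; item (4) comes from $\max_{\overline{B_1}}|u_1-\frac{a_{12}}{a_{21}}u_2-e_1\cdot x|\leq h_0$ together with interior gradient estimates for harmonic functions; item (2) is immediate from $u_i\equiv 0$ near $0$ for $i\neq 1,2$; item (3) is vacuous; and (5) is the implicit function theorem applied to the nearly flat graph $\{v_0=0\}$.

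For the inductive step, assume the assertions at level $l$. Item (4) at level $l$ combined with $|u_{1,n}-\frac{a_{12}}{a_{21}}u_{2,n}-v_l|\leq 2h_l$ shows that $u_{1,n}\geq c\,h_l^{1/2}$ on the slab $\{v_l\geq h_l^{1/2}\}\cap B_{R_{l+1}}$ and symmetrically for $u_{2,n}$. On any ball of radius $\sim h_l^{1/2}$ contained in this slab, every competitor $u_{i,n}$ with $i\neq 1,2$ satisfies $-\Delta u_{i,n}\leq -c\,\beta_n\,h_l^{p_1/2}u_{i,n}^{p_i}$; Lemma \ref{lem:decay-Mu-p-general} (for $p_i>1$) and Remark \ref{rmk:bdd-Mu} (for $p_i=1$) then yield the polynomial and exponential bounds in item (2) at level $l+1$, after the substitution $h_l=h_{l-1}^2$ converts $h_l^{-(p+2)/(2(p_i-1))}$ into $h_{l-1}^{-(p+2)/(p_i-1)}$; the hypothesis on $h_l$ is exactly the threshold making these bounds $\leq 2h_{l+1}$. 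Feeding these estimates into the cancellation identity gives an $L^\infty$ bound on $\Delta(u_{1,n}-\frac{a_{12}}{a_{21}}u_{2,n})$ on $B_{\widetilde{R}_{l+1}}$, and the maximum principle applied to $w_{l+1}=u_{1,n}-\frac{a_{12}}{a_{21}}u_{2,n}-v_{l+1}$ (which vanishes on $\partial B_{\widetilde{R}_{l+1}}$) yields item (1); the extra annular error term appears from comparing $v_l$ and $v_{l+1}$ on $B_{\widetilde{R}_{l+1}}\setminus B_{R_l-h_l^{1/2}/3}$ via Poisson kernel decay, which accounts for the factor $e^{-\ln 2/(6\lambda h_l^{1/2})}$. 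Items (3), (4), (5) then follow, respectively, from standard interior gradient estimates for the harmonic function $v_{l+1}-v_l$, from the telescoping $\sum_{j\leq l+1}h_{j-1}^{1/2}\leq 2h_0^{1/2}\leq (28N)^{-1}$, and from the implicit function theorem.

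The principal obstacle will be the simultaneous calibration of three competing error sources at each step: the polynomial decay tail from the $p_j>1$ competitors, the exponential decay tail from the $p_j=1$ competitors, and the Poisson-annular correction between $v_l$ and $v_{l+1}$. All three must be dominated by the geometric squaring $h_{l+1}=h_l^2$, and the stated lower bound on $h_l$ is precisely the critical threshold at which the polynomial mechanism (worst case among $p_i>1$) ceases to beat this squaring; the alternative form $\beta_n^{-1/(p(p+2))}$ in the absence of super-linear competitors arises because the exponential decay is effectively unbounded, leaving only the annular harmonic tail as the binding constraint.
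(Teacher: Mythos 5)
Your high-level architecture matches the paper's: a strong induction on $l$ in which the decomposition $u_{1,n}-\frac{a_{12}}{a_{21}}u_{2,n}=v_l+w_l$ is iteratively refined, the cancellation of the $u_1u_2$ cross-terms in $-\Delta\bigl(u_{1,n}-\frac{a_{12}}{a_{21}}u_{2,n}\bigr)$ reduces the forcing to the small species $u_{j,n}$ ($j\neq 1,2$), and the decay lemma (Lemma~\ref{lem:decay-Mu-p-general} / Remark~\ref{rmk:bdd-Mu}) propagates smallness of those species on slabs where $v_l$ has a sign. However, there is a genuine gap: you control the competitors only where $|v_l|$ is bounded below, and you have no mechanism at all for bounding $u_{i,n}$ on the complementary strip $\{|v_l|\lesssim h_l\}$, where the competition force degenerates and the decay lemma is useless.

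The paper closes this hole with the narrow-domain estimate of Caffarelli--Karakhanyan--Lin (their Lemma~12, adapted in~\cite{Dancer-WZ2012Trans}): one first shows, using items (4) and (5), that $\{|v_l|\leq 8h_l\}\cap B_{R_l-h_l^{1/2}/2}$ is $(\lambda h_l)$-narrow, and then propagates the subharmonic $u_{i,n}$ from the lateral boundary of this strip a distance $\sim h_l^{1/2}$ inward, incurring the factor $e^{-\frac{\ln 2}{6\lambda h_l^{1/2}}}$. Your proposal instead attributes this exponential term to ``Poisson kernel decay'' in the annulus $B_{\widetilde R_{l+1}}\setminus B_{R_l-h_l^{1/2}/3}$ when comparing $v_l$ with $v_{l+1}$, which is not how the term arises: in the paper the exponential appears first in the pointwise bound for $u_{i,n}$ (item~(2)) and is only then transported into item~(1) by the comparison principle $|w_{l+1}|\leq c_0'\sum_{i\neq 1,2}\sup u_{i,n}$ (using $-\Delta\hat u_{j,n}\geq 0$). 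There is a second, compounding issue: your slab threshold $h_l^{1/2}$ and ball radius $h_l^{1/2}$ leave an unresolved strip of width $\sim h_l^{1/2}$; with the narrow-domain rate $e^{-c\,d/\varepsilon}$ (distance $d$, width $\varepsilon$), moving a distance $h_l^{1/2}$ through a strip of width $h_l^{1/2}$ gives $e^{-c}=O(1)$, i.e.\ no decay. The paper's choice of threshold $8h_l$ (hence width $\sim h_l$) is essential: then the exponent is $\sim 1/h_l^{1/2}\to\infty$, which is what allows the exponential tail to be absorbed by $h_{l+1}=h_l^2$ and the induction to close. Your derivation of the polynomial exponent via the reassignment ``$h_l^{-(p+2)/(2(p_i-1))}\to h_{l-1}^{-(p+2)/(p_i-1)}$ under $h_l=h_{l-1}^2$'' also reflects this mis-scaling; in the paper the exponent $-(p+2)/(p_i-1)$ in $h_l$ comes directly from applying Lemma~\ref{lem:decay-Mu-p-general} with $M\sim\beta_n h_l^p$ and $\rho\sim h_l$, and the index drops by one simply because the inductive step carries level $l$ data into level $l+1$ conclusions.
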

	\begin{proof}
		It is easy to see that the result holds for $l=0$. Assuming that the result holds for any $l \geq 0$, we aim to prove that it holds for $l+1$.
		\par 
		First, we estimate $u_{i,n}$ in $B_{R_{l} - 2h_l^{1/2}/3}$, where $i\neq 1,2$. Following the proof of \cite[Lemma 5.9]{Dancer-WZ2012Trans}, we partition the ball $B_{R_{l} - 2h_l^{1/2}/3}$ into two subsets: $\{|v_l| > 8h_l\} \cap B_{R_{l} - 2h_l^{1/2}/3}$ and $\{|v_l| \leq 8h_l\} \cap B_{R_{l} - 2h_l^{1/2}/3}$.
		
		For any $x \in \{|v_l| > 8h_l\} \cap B_{R_{l} - h_l^{1/2}/2}$, it follows from Point (4) that $B_{2h_l}(x) \subseteq \{|v_l| \geq 4h_l\}$. By Point (1), we further deduce that
		\[
		u_{1,n} + \frac{a_{12}}{a_{21}}u_{2,n} \geq \left|u_{1,n} - \frac{a_{12}}{a_{21}}u_{2,n}\right| \geq |v_l| - 2h_l \geq 2h_l \quad \text{in } B_{2h_l}(x).
		\]
		Combining this inequality with system \eqref{Sys:main-interior} and Point (2), we obtain
		\[
		-\Delta u_{i,n} \leq -c_0\beta_n u_{i,n}^{p_i}\left(a_{i1}u_{1,n}^{p_1} + a_{i2}u_{2,n}^{p_2}\right) \leq -c_0\beta_n u_{i,n}^{p_i}h_l^p,\quad u_{i,n}\leq 2h_l \quad \text{in } B_{2h_l}(x) \quad \forall i \neq 1,2,
		\]
		where $c_0 > 0$ is a constant depending only on $\bm{p} := (p_1, \dots, p_k)$, $(a_{ij})$, $a_0$, and $b_0$. This enables us to apply Lemma \ref{lem:decay-Mu-p-general} or Remark \ref{rmk:bdd-Mu}, yielding
		\begin{align}\label{eq:ap-regular-1}
			u_{i,n}\leq 
			\begin{cases}
				c_0h_l^{-\frac{p+2}{p_i-1}} \beta_n^{-\frac{1}{p_i-1}}\quad &\text{if } p_i>1,\\
				c_0h_le^{-c_1h_l^{(p+2)/2}\sqrt{\beta_n}}\quad &\text{if } p_i=1
			\end{cases}
			\quad \text{in }\{|v_l| > 8h_l\} \cap B_{R_{l} - h_l^{1/2}/2},
		\end{align}
		where $c_0,c_1 > 0$ are constants independent of $n$ and $l$.
		\par 
		On the other hand, using Points (4) and (5), one can deduce that the set $\{|v_l| \leq 8h_l\} \cap B_{R_l - h_l^{1/2}/2}$ is $(\lambda h_l)$-narrow in the sense of \cite[Lemma 12]{Caffarelli-K-L2009fixpoint}, where $\lambda > 0$ is a constant depending only on $N$. By combining this property with \eqref{eq:ap-regular-1}, we can follow the argument in the proof of \cite[Lemma 5.10]{Dancer-WZ2012Trans} to derive 
		\begin{align}\label{eq:ap-regular-2}
			u_{i,n}\leq 
			\begin{cases}
				2c_0h_l^{-\frac{p+2}{p_i-1}} \beta_n^{-\frac{1}{p_i-1}}+4h_l e^{-\frac{\ln 2}{6\lambda h_l^{1/2}}} \quad &\text{if } p_i>1,\\
				2c_0h_le^{-c_1h_l^{(p+2)/2}\sqrt{\beta_n}}+4h_l e^{-\frac{\ln 2}{6\lambda h_l^{1/2}}} \quad &\text{if } p_i=1
			\end{cases}
			\quad \text{in }\{|v_l| \leq 8h_l\} \cap B_{R_{l} - 2h_l^{1/2}/3}.
		\end{align}
		As a consequence of \eqref{eq:ap-regular-1} and \eqref{eq:ap-regular-2}, we thus obtain
		\begin{align}\label{eq:ap-regular-3}
			u_{i,n}\leq 
			\begin{cases}
				2c_0h_l^{-\frac{p+2}{p_i-1}} \beta_n^{-\frac{1}{p_i-1}}+4h_l e^{-\frac{\ln 2}{6\lambda h_l^{1/2}}} \quad &\text{if } p_i>1,\\
				2c_0h_le^{-c_1h_l^{(p+2)/2}\sqrt{\beta_n}}+4h_l e^{-\frac{\ln 2}{6\lambda h_l^{1/2}}} \quad &\text{if } p_i=1
			\end{cases}
			\quad \text{in } B_{R_{l} - 2h_l^{1/2}/3}.
		\end{align}
		\par 
		Next, we estimate $w_{l+1}$. Note that $-\Delta \hat{u}_{j,n} \geq 0$ in $B_{R_l - 2h_l^{1/2}/3}$ for $j = 1,2$, where the hat operator is defined in \eqref{def:hat-operator}. By the definition of $w_{l+1}$, the weak comparison principle can be applied to deduce that 
		\[
		|w_{l+1}| \leq c_0' \sum_{i \neq 1,2} \sup_{B_{R_l - 2h_l^{1/2}/3}} u_{i,n} \quad \text{in } B_{R_l - 2h_l^{1/2}/3},
		\]
		where $c_0' > 1$ depends only on $(a_{ij})$. It then follows from \eqref{eq:ap-regular-3} that 
		\begin{align*}
			|w_{l+1}| \leq c_0 \sum_{p_i>1,\atop i\neq 1,2}h_l^{-\frac{p+2}{p_i-1}} \beta_n^{-\frac{1}{p_i-1}}+c_0\sum_{p_i=1,\atop i\neq 1,2}h_le^{-c_1h_l^{(p+2)/2}\sqrt{\beta_n}}+ c_2 h_l e^{-\frac{\ln 2}{6\lambda h_l^{1/2}}} \quad \text{in } B_{R_l - 2h_l^{1/2}/3},
		\end{align*}
		where $c_0>0$ is a constant independent of $n$ and $l$, and $c_2>0$ depends only on $k$ and $(a_{ij})$.
		\par 
		Now, if $h_0 > 0$ is sufficiently small but fixed, and
		\begin{align*}
			h_l \geq 
			\begin{cases}
				c_3\max_{p_i>1,\atop i\neq 1,2}\beta_n^{-\frac{1}{2p_i+p}}\quad &\text{if }\{3\leq i\leq k:p_i>1\}\neq \emptyset,\\
				\beta_n^{-\frac{1}{p(p+2)}}\quad &\text{otherwise},
			\end{cases}
		\end{align*} 
		where $c_3:=\max_{p_j>1,\atop j\neq 1,2}(kc_0)^{\frac{p_j-1}{2p_j+p}}>0$, 
		then for $n$ large, one has
		\begin{align}\label{eq:ap-regular-4}
			|w_{l+1}| \leq \frac{1}{k}\sum_{p_i>1,\atop i\neq 1,2}h_l^2+\frac{1}{k}\sum_{p_i=1,\atop i\neq 1,2}h_l^2+h_l^2 \leq 2h_l^2 \quad \text{in } B_{R_l - 2h_l^{1/2}/3} \supseteq B_{R_{l+1}}.
		\end{align}
		Hence, Points (1) and (2) hold for $l+1$; Point (3) follows from \eqref{eq:ap-regular-4} together with the interior gradient estimates for harmonic functions; Point (4) is a direct consequence of Point (3), and Point (5) follows in turn from Point (4). The proof is thus completed.
	\end{proof}
	\par 
	\begin{proof}[Conclusion of the proof of Theorem \ref{thm:ab-near-regular}]
		If $N \geq 2$, we first choose $h_0 > 0$ to be sufficiently small. Let $3\leq i\leq k$. For $n\gg 1$ large, we then select an integer $l_n \in \mathbb{N}$ such that
		\begin{align*}
			\begin{cases}
				h_{l_n - 1} \geq \max_{p_i>1,\atop i\neq 1,2}c_0^{\frac{p_i-1}{p+2}} \beta_n^{-\frac{1}{2p_i(p+2)}} > h_{l_n - 1}^2\quad &\text{if }\{3\leq i\leq k:p_i>1\}\neq \emptyset,\\
				h_{l_n - 1}\geq \beta_n^{-\frac{1}{2p(p+2)}}>h_{l_n - 1}^2\quad &\text{otherwise}.
			\end{cases}
		\end{align*}
		\par 
		If $\{3\leq i\leq k:p_i>1\}\neq \emptyset$, then 
		\[
		h_{l_n} = h_{l_n - 1}^2 \geq  \max_{p_i>1,\atop i\neq 1,2}c_0^{2 \cdot \frac{p_i-1}{p+2}} \beta_n^{-\frac{1}{p_i(p+2)}} > c_3\max_{p_i>1,\atop i\neq 1,2}\beta_n^{-\frac{1}{2p_i+p}}.
		\]
		By Part (2) of Lemma \ref{lem:iteration}, we obtain
		\begin{align*}
			u_{i,n}\leq 
			\begin{cases}
				\beta_n^{-\frac{1}{p_i}-\frac{1}{2p_i(p_i-1)}}+2h_{l_n-1} e^{-\frac{\ln 2}{6\lambda h_{l_n-1}^{1/2}}}\leq 2\beta_n^{-\frac{1}{p_i}-\frac{1}{2p_i(p_i-1)}} \quad &\text{if } p_i>1,\\
				c_0h_{l_n-1}e^{-c_1h_{l_n-1}^{-(p+2)/2}}+2h_{l_n-1}e^{-\frac{\ln 2}{6\lambda h_{l_n-1}^{1/2}}}\leq e^{-\beta_n^{\frac{1}{16p(p+2)}}} \quad &\text{if } p_i=1
			\end{cases}
			\quad \text{in } B_{1/2}.
		\end{align*}
		\par 
		If $\{3\leq i\leq k:p_i>1\}=\emptyset$, then similarly, we deduce that 
		$$u_{i,n}\leq c_0h_{l_n-1}e^{-c_1h_{l_n-1}^{(p+2)/2}\sqrt{\beta_n}}+4h_{l_n-1}e^{-\frac{\ln 2}{6\lambda h_{l_n-1}^{1/2}}}\leq e^{-\beta_n^{\frac{1}{16p(p+2)}}} \quad\text{in }B_{1/2}.$$
		\par 
		For $N = 1$, the desired result can be obtained by adapting the argument in \cite[below Remark 5.11, p. 996]{Dancer-WZ2012Trans}. This completes the proof.
	\end{proof}
	
	\subsection{Sharp estimates near the interface $\Gamma_n$}
	
	Thirdly, we establish Theorem \ref{thm:ap-near-gamma-n}. The proof is divided into two lemmas.
	\par 
	First, we consider the case where $x_0 \in \mathcal{R}_{\bm{u}}$, and prove part (1) of Theorem \ref{thm:ap-near-gamma-n} by combining Theorem \ref{thm:ab-near-regular} with Proposition \ref{prop:uniform-upper-bdd}.
	\begin{lemma}\label{lem:ap-1}
		Under the assumptions of Theorem \ref{thm:ap-near-gamma-n}, and assuming moreover that $x_0 \in \mathcal{R}_{\bm{u}}$, we have
		$$\limsup_{n \to \infty} \beta_n \sum_{\substack{i,j=1 \\ i \neq j}}^k u_{i,n}(x_n)^{p_i + 1} u_{j,n}(x_n)^{p_j} \in (0, +\infty).$$
	\end{lemma}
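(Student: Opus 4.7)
The plan is to handle the finite upper bound and the strict positivity separately. The finite upper bound is immediate from Proposition \ref{prop:uniform-upper-bdd}: since $x_n\to x_0\in\Omega$, for $n$ large $x_n$ lies in a fixed compact set $K\Subset\Omega$, and the proposition then yields $\beta_n u_{i,n}(x_n)^{p_i+1}u_{j,n}(x_n)^{p_j}\leq C$ for every $i\neq j$, so summing over the finitely many pairs gives $\limsup<+\infty$.

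For strict positivity I would argue by contradiction, assuming
\[
\beta_n \sum_{i\neq j} u_{i,n}(x_n)^{p_i+1}u_{j,n}(x_n)^{p_j}\to 0
\]
along a subsequence. The definition of $\Gamma_n$ supplies a pair $I\neq J$ (fixed up to further extraction) with $u_{I,n}(x_n)=u_{J,n}(x_n)=:c_n$ and with $(u_{I,n}+u_{J,n})^{p_I+p_J+1}(x_n)$ maximal over all pairs. My first task is to show $\{I,J\}=\{i_1,i_2\}$ for $n$ large: Theorem \ref{thm:ab-near-regular} forces $u_{l,n}\leq\delta_n\to 0$ at the explicit polynomial/exponential rate on a fixed ball $B_r(x_0)$ for every $l\neq i_1,i_2$, and if one of $I,J$ lay outside $\{i_1,i_2\}$, the $\Gamma_n$-maximality would force $u_{i_1,n}(x_n)+u_{i_2,n}(x_n)$ to decay like $\delta_n^{\kappa}$ for some $\kappa>0$; I plan to rule this out by combining the $C^{1,\alpha}_{\text{loc}}$-convergence $h_n:=a_{i_2i_1}u_{i_1,n}-a_{i_1i_2}u_{i_2,n}\to h$ with the fact that $|\nabla h(x_0)|>0$, which is built into the definition of $x_0\in\mathcal{R}_{\bm u}$.

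With $u_{i_1,n}(x_n)=u_{i_2,n}(x_n)=c_n$ and $\beta_n c_n^{p_{i_1}+p_{i_2}+1}\to 0$ in hand, I would perform a blow-up at $x_n$ at scale $c_n$ by setting $v_{l,n}(y):=u_{l,n}(x_n+c_n y)/c_n$. Theorem \ref{thm:interior-lip} furnishes a uniform Lipschitz bound on $\bm v_n$, and by construction $v_{i_1,n}(0)=v_{i_2,n}(0)=1$. The key point is that every interaction term in the rescaled equations for $v_{i_1,n}$ and $v_{i_2,n}$ tends to zero uniformly on compacta: the $(i_1,i_2)$-coefficient is exactly $\beta_n c_n^{p_{i_1}+p_{i_2}+1}\to 0$, while for $j\neq i_1,i_2$ the corresponding rescaled factor can be written as $c_n^{p_{i_1}+1}v_{i_1,n}^{p_{i_1}}\cdot\beta_n u_{j,n}(x_n+c_n\cdot)^{p_j}$, and $\beta_n u_{j,n}^{p_j}\to 0$ uniformly in $B_r(x_0)$ by the explicit decay in Theorem \ref{thm:ab-near-regular}. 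Since $\bm f_n\equiv 0$, the right-hand sides of the rescaled equations vanish uniformly on compacta; standard elliptic estimates upgrade Ascoli $C^0_{\text{loc}}$-convergence to $C^1_{\text{loc}}$-convergence, and Liouville then forces the limit pair $(v_{i_1},v_{i_2})$ to be the constants $v_{i_1}\equiv v_{i_2}\equiv 1$.

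Finally the contradiction arises from comparing gradients at $x_n$. Since $\nabla v_{l,n}(0)=\nabla u_{l,n}(x_n)$ by the chain rule, the $C^1_{\text{loc}}$-convergence of $\bm v_n$ to constants yields $\nabla u_{i_1,n}(x_n),\nabla u_{i_2,n}(x_n)\to 0$, and hence $\nabla h_n(x_n)\to 0$. On the other hand, the $C^{1,\alpha}_{\text{loc}}$-convergence $h_n\to h$ together with $x_n\to x_0$ gives $\nabla h_n(x_n)\to\nabla h(x_0)\neq 0$, the desired contradiction. I expect the main obstacle to be the initial reduction to $\{I,J\}=\{i_1,i_2\}$, since it requires a careful quantitative comparison between the interface geometry near $x_0$ (inherited from the $C^{1,\alpha}$-structure of Theorem \ref{thm:ab-near-regular}) and the decay rates for the nondominant components; all other ingredients then line up as clean applications of the preceding results of the paper.
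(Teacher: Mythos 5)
The upper bound from Proposition~\ref{prop:uniform-upper-bdd} and the overall blow-up/Liouville strategy for the lower bound match the paper's proof. The genuine gap is in your first substantive step: you want to establish $\{I,J\}=\{i_1,i_2\}$ before performing the blow-up, and you propose ruling out the alternative scenario $u_{i_1,n}(x_n)+u_{i_2,n}(x_n)\lesssim\delta_n^{\kappa}$ directly from the $C^{1,\alpha}$-convergence $h_n\to h$ and $|\nabla h(x_0)|>0$. This does not work as stated: since $h(x_0)=0$, one has $h_n(x_n)\to 0$ regardless, and $|\nabla h(x_0)|>0$ controls only the first-order behavior of $h$ near $x_0$; it yields no lower bound on $u_{i_1,n}(x_n)+u_{i_2,n}(x_n)$, whose rate of decay is governed by how fast $x_n$ approaches the interface, which is unconstrained. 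Consequently the identification of the maximal pair cannot be obtained in this way prior to invoking the Liouville mechanism.

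The paper avoids this circularity by performing the blow-up at the scale $\varepsilon_n:=u_{i_1,n}(x_n)+u_{i_2,n}(x_n)$, which is well-defined without knowing the maximal pair in advance, then proving the claim $\limsup_n\beta_n\varepsilon_n^{p_{i_1}+p_{i_2}+1}>0$ via exactly the Liouville argument you deploy later (if the quantity tended to zero, $v_{i_1}$ would be a nonnegative, globally Lipschitz harmonic function, hence constant, contradicting $|\nabla v_{i_1,n}(0)|=|\nabla u_{i_1,n}(x_n)|\geq c>0$). Only afterwards does the paper use the $\Gamma_n$-maximality together with the explicit decay rates of Theorem~\ref{thm:ab-near-regular}: if $\{I,J\}\neq\{i_1,i_2\}$, then $\varepsilon_n^{p_{i_1}+p_{i_2}+1}\leq(2u_{I,n}(x_n))^{p_I+p_J+1}$ with at least one of $u_{I,n},u_{J,n}$ decaying polynomially or exponentially, which forces $\beta_n\varepsilon_n^{p_{i_1}+p_{i_2}+1}\to 0$ and contradicts the claim. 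This gives $\{I,J\}=\{i_1,i_2\}$, hence $v_{i_1,n}(0)=v_{i_2,n}(0)=1/2$, and the conclusion follows by computing the ratio of the two quantities of interest. In short, the Liouville step must precede, not follow, the identification of the maximal pair. The remaining ingredients you list (uniform Lipschitz bound from Theorem~\ref{thm:interior-lip}, vanishing of all rescaled interaction terms via Theorem~\ref{thm:ab-near-regular}, elliptic upgrade to $C^1_{\mathrm{loc}}$-convergence, and the final gradient contradiction with $\nabla h(x_0)\neq 0$) all align with the paper.
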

	\begin{proof}
		Since $x_n \in \Gamma_n$ and $x_n \to x_0$ in $\Omega$ as $n \to \infty$, Proposition \ref{prop:uniform-upper-bdd} implies the existence of a constant $C > 0$ (independent of $n$) such that 
		$$\beta_n \sum_{\substack{i,j=1 \\ i \neq j}}^k u_{i,n}(x_n)^{p_i + 1} u_{j,n}(x_n)^{p_j} \leq C.$$
		It therefore remains to show that 
		$$\limsup_{n \to \infty} \beta_n \sum_{\substack{i,j=1 \\ i \neq j}}^k u_{i,n}(x_n)^{p_i + 1} u_{j,n}(x_n)^{p_j} > 0.$$
		
		Let $i_1, i_2$ denote the only two indices for which $u_{i_1}, u_{i_2} \not\equiv 0$ in a neighborhood of $x_0$. Without loss of generality, we may assume $x_0 = 0 \in B_2 \subseteq \Omega$, $i_1 = 1$, and $i_2 = 2$. By Theorem \ref{thm:ab-near-regular}, there exists $r \in (0,1)$ (independent of $n$) such that $\{\bm{u}_n\}$ satisfies the properties stated in Theorem \ref{thm:ab-near-regular} within $B_r$. In particular, 
		$$\beta_n u_{j,n}^{p_j} \to 0 \quad \text{in } L^{\infty}(B_r) \quad \text{as } n \to \infty, \quad \forall 3 \leq j \leq k.$$
		Since $x_n \to x_0 = 0$ in $\Omega$ as $n \to \infty$, this further implies 
		$$\lim_{n \to \infty} \beta_n \sum_{\substack{\{i,j\} \neq \{1,2\} \\ i \neq j}}^k u_{i,n}(x_n)^{p_i + 1} u_{j,n}(x_n)^{p_j} = 0.$$
		Thus, it suffices to prove 
		\begin{align}\label{eq:ap-near-Gamma-n-0}
			\limsup_{n \to \infty} \beta_n \left[ u_{1,n}(x_n)^{p_1 + 1} u_{2,n}(x_n)^{p_2} + u_{2,n}(x_n)^{p_2 + 1} u_{1,n}(x_n)^{p_1} \right] > 0.
		\end{align}
		
		To this end, we first claim that, up to a subsequence, there exist $s \in \{1,2\}$ and $c > 0$ such that 
		\begin{align}\label{eq:ap-near-Gamma-n-1}
			|\nabla u_{s,n}(x_n)| \geq c \quad \forall n \geq 1.
		\end{align} 
		
		Indeed, Theorem \ref{thm:ab-near-regular} yields 
		$$a_{21}u_{1,n} - a_{12}u_{2,n} \to a_{21}u_1 - a_{12}u_2 \quad \text{in } C_{\text{loc}}^1(B_r) \quad \text{as } n \to \infty.$$
		Since $|\nabla (a_{21}u_1 - a_{12}u_2)(0)| > 0$ and $x_n \to x_0 = 0$ in $B_r$ as $n \to \infty$, the claim follows immediately.
		
		Without loss of generality, we assume $s = 1$.
		
		Next, let $\varepsilon_n := u_{1,n}(x_n) + u_{2,n}(x_n) > 0$ and define 
		$$v_{i,n}(x) := \frac{1}{\varepsilon_n} u_{i,n}(x_n + \varepsilon_n x) \quad \forall x \in B_{r/(2\varepsilon_n)}, \ \forall i = 1,2.$$
		By Theorem \ref{thm:interior-lip} and the Ascoli–Arzelà theorem, we may pass to a subsequence (still denoted by $\{(v_{1,n}, v_{2,n})\}$) such that $\{(v_{1,n}, v_{2,n})\}$ converges to a function $(v_1, v_2)$ in $C_{\text{loc}}(\mathbb{R}^N; \mathbb{R}^2)$ as $n \to \infty$. Since $v_{1,n}(0) + v_{2,n}(0) = 1$ for all $n$, we have $v_1(0) + v_2(0) = 1$. Moreover, Theorem \ref{thm:ab-near-regular} further implies that $(v_{1,n}, v_{2,n})$ is a positive solution to 
		\begin{align}\label{eq:ap-near-Gamma-n-2}
			\begin{cases}
				-\Delta v_{1,n} = -a_{12}\beta_n \varepsilon_n^{p_1 + p_2 + 1} v_{1,n}^{p_1} v_{2,n}^{p_2} + o(1) & \text{in } B_{r/(2\varepsilon_n)}, \\
				-\Delta v_{2,n} = -a_{21}\beta_n \varepsilon_n^{p_1 + p_2 + 1} v_{1,n}^{p_1} v_{2,n}^{p_2} + o(1) & \text{in } B_{r/(2\varepsilon_n)},
			\end{cases}
		\end{align}
		for all $n \geq 1$, where $o(1)$ denotes a uniformly bounded function that converges to $0$ in $L^{\infty}_{\text{loc}}(\mathbb{R}^N)$ as $n \to \infty$.
		
		We now claim that $\limsup_{n \to \infty} \beta_n \varepsilon_n^{p_1 + p_2 + 1} > 0$.
		
		Suppose, for contradiction, that $\limsup_{n \to \infty} \beta_n \varepsilon_n^{p_1 + p_2 + 1} = 0$. Then, by \eqref{eq:ap-near-Gamma-n-2} and standard elliptic estimates, we may pass to a subsequence such that $v_{1,n} \to v_1$ in $C^1_{\text{loc}}(\mathbb{R}^N)$ as $n \to \infty$. It follows from \eqref{eq:ap-near-Gamma-n-1} that 
		$$|\nabla v_1(0)| = \lim_{n \to \infty} |\nabla v_{1,n}(0)| = \lim_{n \to \infty} |\nabla u_{1,n}(x_n)| \geq c > 0.$$
		On the other hand, \eqref{eq:ap-near-Gamma-n-2} implies that $v_1$ is a non-negative harmonic function in $\mathbb{R}^N$, and hence $v_1$ is constant in $\mathbb{R}^N$. This contradicts $|\nabla v_1(0)| > 0$, proving the claim.
		
		Finally, recalling the definition of $\Gamma_n$ (see \eqref{def:gamma-n}) and noting that $x_n \in \Gamma_n$, we deduce from the above claim that \eqref{eq:ap-near-Gamma-n-0} holds. The proof is complete.
	\end{proof}
	\par 
	Next, we establish part (2) of Theorem \ref{thm:ap-near-gamma-n} by applying Proposition \ref{prop:uniform-upper-bdd}, Theorem \ref{thm:interior-lip} and Theorem \ref{thm:ACF-formula}.
	\begin{lemma}\label{lem:ap-2}
		Under the assumptions of Theorem \ref{thm:ap-near-gamma-n}, and assuming moreover that $x_0 \in \Gamma_{\bm{u}}\setminus \mathcal{R}_{\bm{u}}$, we have
		$$\limsup_{n \to \infty} \beta_n \sum_{\substack{i,j=1 \\ i \neq j}}^k u_{i,n}(x_n)^{p_i + 1} u_{j,n}(x_n)^{p_j}=0.$$
	\end{lemma}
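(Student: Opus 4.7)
The plan is to argue by contradiction, combining a blow-up argument with the Alt--Caffarelli--Friedman type monotonicity formula of Theorem \ref{thm:ACF-formula}. Suppose, for contradiction, that $\limsup_{n\to\infty}\beta_n\sum_{i\neq j}u_{i,n}(x_n)^{p_i+1}u_{j,n}(x_n)^{p_j}>0$. Since $x_n\in\Gamma_n$, the definition of $\Gamma_n$ provides indices $i_n\neq j_n$ with $u_{i_n,n}(x_n)=u_{j_n,n}(x_n)=:\varepsilon_n$ and such that $(i_n,j_n)$ maximizes $(u_{l,n}(x_n)+u_{s,n}(x_n))^{p_l+p_s+1}$ over pairs $l\neq s$. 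Up to a further subsequence and relabeling, I assume $(i_n,j_n)\equiv(1,2)$. Using the elementary bound $a^{p_l+1}b^{p_s}\leq (a+b)^{p_l+p_s+1}$ together with this maximality, the contradiction hypothesis forces
$$
M_n := \beta_n\varepsilon_n^{p_1+p_2+1}\geq c_0>0,
$$
while Proposition \ref{prop:uniform-upper-bdd} gives the upper bound $M_n\leq C_0$. Since $u_1(x_0)=u_2(x_0)=0$, we also have $\varepsilon_n\to 0$.

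Next I define the blow-up $v_{i,n}(y):=u_{i,n}(x_n+\varepsilon_n y)/\varepsilon_n$ for $i=1,2$ on $\Omega_n:=(\Omega-x_n)/\varepsilon_n$, so that $v_{1,n}(0)=v_{2,n}(0)=1$. By Theorem \ref{thm:interior-lip}, $\{(v_{1,n},v_{2,n})\}$ is uniformly locally Lipschitz and, up to a subsequence, converges in $C_{\mathrm{loc}}(\mathbb{R}^N)$ to some $(v_1,v_2)$ with $v_i(0)=1$. Dropping the non-negative contributions of the other components, $(v_{1,n},v_{2,n})$ satisfies the 2-component subsystem
$$
-\Delta v_{i,n} \leq -a_{ij} M_n v_{i,n}^{p_i} v_{j,n}^{p_j} \quad \text{in } \Omega_n,\quad \{i,j\}=\{1,2\}.
$$
I apply Theorem \ref{thm:ACF-formula} to this subsystem with $d'=0$ and $M_{12}=M_{21}=M_n$. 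Hypotheses (h0) and (h3) are immediate; hypothesis (h1) holds uniformly in $r>0$ because each $v_{i,n}$ is subharmonic, so Lemma \ref{lem:non-decrease-widetilde-H} combined with Jensen's inequality yields $H_i(\bm{v}_n,0,r)\geq \sigma_{N-1} v_{i,n}(0)^2\geq \sigma_{N-1}/4$ for large $n$; and (h2) is verified as in Lemma \ref{lem:interior-lip-J-Lambda} via Lemma \ref{lem:property-global-lip-sol}(3). Thus there exists $R_n\to +\infty$ such that
$$
\Phi_n(r) := \frac{J_1(\bm{v}_n,0,r)\, J_2(\bm{v}_n,0,r)}{r^4}\exp\!\Big\{-C_1(M_n r^2)^{-1/(2p+2)}\Big\}
$$
is non-decreasing on $[2,R_n]$, and the argument of Lemma \ref{lem:interior-lip-J-Lambda} gives $\Phi_n(2)\geq c_1>0$ uniformly in $n$.

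Finally I scale back and invoke the regularity of $x_0$. A direct change of variables $z=x_n+\varepsilon_n y$ yields the identity $J_i(\bm{v}_n,0,r) = \varepsilon_n^{-2}\widetilde{J}_i(\bm{u}_n,x_n,\varepsilon_n r)$, where $\widetilde{J}_i$ denotes the two-component $J_i$-functional for $\bm{u}_n$ (with $M_{12}=\beta_n$). For fixed small $\delta>0$, setting $r_n:=\delta/\varepsilon_n\in[2,R_n]$ (valid for large $n$), I obtain
$$
\Phi_n(r_n) = \frac{\widetilde{J}_1(\bm{u}_n,x_n,\delta)\,\widetilde{J}_2(\bm{u}_n,x_n,\delta)}{\delta^4}\exp\!\Big\{-C_1(\beta_n\varepsilon_n^{p_1+p_2-1}\delta^2)^{-1/(2p+2)}\Big\}.
$$
Since $p_1+p_2\geq 2$ gives $\beta_n\varepsilon_n^{p_1+p_2-1}=M_n\varepsilon_n^{-2}\to+\infty$, the exponential factor tends to $1$. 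Using the strong $H^1_{\mathrm{loc}}$-convergence $\bm{u}_n\to\bm{u}$ together with the segregation $u_1 u_2\equiv 0$ in the limit, the interaction contribution to $\widetilde{J}_i$ vanishes, yielding $\widetilde{J}_i(\bm{u}_n,x_n,\delta)\to \int_{B_\delta(x_0)}|\nabla u_i|^2|y-x_0|^{2-N}\,dy$ and hence $\lim_{n\to\infty}\Phi_n(r_n) = J_{12}(\bm{u},x_0,\delta)$. Since $x_0\in\Gamma_{\bm{u}}\setminus\mathcal{R}_{\bm{u}}$, the latter tends to $0$ as $\delta\to 0^+$; choosing $\delta$ small and then $n$ large thus produces $\Phi_n(r_n)<c_1$, contradicting the monotonicity $\Phi_n(r_n)\geq \Phi_n(2)\geq c_1$. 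The principal technical obstacle I expect is the verification that the interaction term in $\widetilde{J}_i(\bm{u}_n,x_n,\delta)$ vanishes in the limit; this relies on strong $H^1_{\mathrm{loc}}$-convergence of $\bm{u}_n$ combined with the limiting segregation and a standard integration-by-parts argument familiar from the theory of strongly competing systems.
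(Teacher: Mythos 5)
Your proposal is correct and follows essentially the same route as the paper's own proof: blow up at the scale $\varepsilon_n$ determined by the pair achieving the maximum in the definition of $\Gamma_n$, use Theorem~\ref{thm:interior-lip} and Proposition~\ref{prop:uniform-upper-bdd} to control $\beta_n\varepsilon_n^{p_1+p_2+1}$ from above and below, invoke Theorem~\ref{thm:ACF-formula} with $d'=0$ (hypotheses (h1)--(h3) being verified just as you describe via subharmonicity and positivity), and scale back to obtain a uniform positive lower bound on $J_{12}(\bm{u},x_0,\delta)$, contradicting $x_0\notin\mathcal{R}_{\bm{u}}$. The only differences are cosmetic (you normalize $\varepsilon_n=u_{1,n}(x_n)$ rather than $u_{1,n}(x_n)+u_{2,n}(x_n)$, and you start the monotonicity at $r=2$ rather than at some $r_1$ where a uniform pointwise lower bound is first established), and your identification of the vanishing of the interaction term in the scaled-back $\widetilde{J}_i$ as the residual technical point matches exactly the integration-by-parts argument the paper uses to close the proof.
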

	\begin{proof}
		We argue by contradiction. Suppose that the conclusion fails. Since $x_n \in \Gamma_n$, by passing to a subsequence if necessary, we may assume that $u_{1,n}(x_n) = u_{2,n}(x_n)$ and 
		$$[u_{1,n}(x_n) + u_{2,n}(x_n)]^{p_1 + p_2 + 1} \geq [u_{l,n}(x_n) + u_{s,n}(x_n)]^{p_l + p_s + 1} \quad \text{for all } l \neq s.$$
		Moreover, by Proposition \ref{prop:uniform-upper-bdd}, we may assume that 
		\begin{align}\label{eq:ap-near-Gamma-n-3}
			\beta_n u_{1,n}(x_n)^{p_1 + 1} u_{2,n}(x_n)^{p_2} = \beta_n u_{1,n}(x_n)^{p_1 + p_2 + 1} \to c_0 > 0 \quad \text{as } n \to \infty,
		\end{align}
		where $c_0 > 0$ is a constant. Since $x_n \to x_0$ in $\Omega$ as $n \to \infty$, we may further assume that $B_{r_0/2}(x_n) \subseteq B_{r_0}(x_0)\subseteq \Omega$ for all $n \geq 1$, where $0 < r_0 < \text{dist}(x_0, \partial\Omega)$ is a fixed constant. 
		\par 
		Let $\varepsilon_n := u_{1,n}(x_n) + u_{2,n}(x_n) > 0$ and define 
		$$v_{i,n}(x) := \frac{1}{\varepsilon_n} u_{i,n}(x_n + \varepsilon_n x) \quad \forall x \in B_{r_0/(2\varepsilon_n)}, \ \forall i = 1,2.$$
		Then $v_{1,n}(0) = v_{2,n}(0) = 1/2$ for all $n \geq 1$. Moreover, by \eqref{Sys:main-interior}, $(v_{1,n}, v_{2,n})$ is a weak subsolution to 
		\begin{align*}
			\begin{cases}
				-\Delta v_{1,n} \leq -a_{12} \beta_n \varepsilon_n^{p_1 + p_2 + 1} v_{1,n}^{p_1} v_{2,n}^{p_2}, \quad v_{1,n} > 0 \quad & \text{in } B_{r_0/(2\varepsilon_n)}, \\
				-\Delta v_{2,n} \leq -a_{21} \beta_n \varepsilon_n^{p_1 + p_2 + 1} v_{1,n}^{p_1} v_{2,n}^{p_2}, \quad v_{2,n} > 0 \quad & \text{in } B_{r_0/(2\varepsilon_n)}.
			\end{cases}
		\end{align*}
		On the one hand, by Theorem \ref{thm:interior-lip} and noting that $v_{1,n}(0) = v_{2,n}(0) = 1/2$ for all $n \geq 1$, we can find a constant $r_1 > 0$ independent of $n$ such that, up to a subsequence, 
		\begin{align}\label{eq:ap-near-Gamma-n-4}
			v_{i,n} \geq 1/4 \quad \text{in } \overline{B_{r_1}}, \quad \forall i = 1,2.
		\end{align}
		Since $v_{i,n}$ is subharmonic, it follows that 
		$$H_i(\bm{v}_n, 0, r) \geq \frac{\sigma_{N-1}}{16} > 0, \quad \forall r \in [r_1, r_0/(4\varepsilon_n)], \quad \forall i = 1,2,$$
		where we denote $\bm{v}_n := (v_{1,n}, v_{2,n}, 0, \dots, 0)$. On the other hand, since $v_{i,n} > 0$ in $B_{r_0/(2\varepsilon_n)}$, we have 
		$$J_i(\bm{v}_n, 0, r) > 0, \quad \Lambda_i(\bm{v}_n, 0, r) > 0, \quad \forall r \in [r_1, r_0/(4\varepsilon_n)], \quad \forall i = 1,2,$$
		where $J_i$ and $\Lambda_i$ are defined in \eqref{def:H-J-Lambda} with $d' = 0$, and 
		$$M_{12} = M_{21} = \beta_n \varepsilon_n^{p_1 + p_2 + 1}, \quad M_{ij} = 1 \quad \forall 1 \leq i,j \leq k \text{ with } i \neq j \text{ and } \{i,j\} \neq \{1,2\}.$$
		As a consequence, Theorem \ref{thm:ACF-formula} is applicable, yielding that the function 
		$$J_{12;n}(r) := \frac{J_1(\bm{v}_n, 0, r) J_2(\bm{v}_n, 0, r)}{r^4} \exp\left\{ -C \left( \beta_n \varepsilon_n^{p_1 + p_2 + 1} \cdot r^2 \right)^{-1/(2p + 2)} \right\}$$
		is non-decreasing in $[r_1, r_0/(4\varepsilon_n)]$, where $p := \max\{p_1, p_2\} \geq 1$ and $C > 0$ is a constant independent of $n$. In particular, for any $0 < r < r_0/4$, we have 
		\begin{align*}
			J_{12;n}(r/\varepsilon_n) \geq J_{12;n}(r_1) \quad \text{for sufficiently large } n.
		\end{align*}
		By \eqref{eq:ap-near-Gamma-n-3}, we observe that 
		$$\beta_n \varepsilon_n^{p_1 + p_2 + 1} \to 2^{p_1 + p_2 + 1} c_0 \in (0, +\infty) \quad \text{as } n \to \infty.$$
		This, together with \eqref{eq:ap-near-Gamma-n-4}, implies that $J_{12;n}(r_1) \geq C > 0$ for some constant $C > 0$ independent of $n$ and $r$. Hence, for sufficiently large $n$, we deduce that 
		\begin{align*}
			C \leq J_{12;n}(r/\varepsilon_n)
			\leq& C_1 \cdot \frac{J_1(\bm{v}_n, 0, r/\varepsilon_n) J_2(\bm{v}_n, 0, r/\varepsilon_n)}{(r/\varepsilon_n)^4} \\
			=& C_1 \cdot \frac{1}{r^4} \int_{B_r(x_n)} \left( |\nabla u_{1,n}|^2 + \beta_n a_{12} u_{1,n}^{p_1 + 1} u_{2,n}^{p_2} \right) |x - x_n|^{2 - N} dx \\
			&\cdot \int_{B_r(x_n)} \left( |\nabla u_{2,n}|^2 + \beta_n a_{21} u_{2,n}^{p_2 + 1} u_{1,n}^{p_1} \right) |x - x_n|^{2 - N} dx,
		\end{align*}
		where $C_1 > 0$ is a constant independent of $n$ and $r$. Passing to the limit as $n \to \infty$ (up to a subsequence), it follows that 
		$$J_{ij}(\bm{u}, x_0, r) \geq C > 0 \quad \text{for a.e. } 0 < r < r_0/4,$$
		where $J_{ij}$ is defined in \eqref{def:J-ij} and $C > 0$ is a constant independent of $r$. Here, we have used the fact that, by \eqref{Sys:main-interior}, $\bm{u} \in \mathscr{S}(\Omega)$ and $\bm{u}_n\to \bm{u}$ in $H^1_{\text{loc}}(\Omega;\mathbb{R}^k)$ as $n\to\infty$, one can deduce that 
		\begin{align*}
			\beta_n \int_{B_r(x_n)} u_{i,n}^{p_i + 1} \sum_{j \neq i} a_{ij} u_{j,n}^{p_j}
			&= \int_{\partial B_r(x_n)} u_{i,n} \partial_{\nu} u_{i,n} - \int_{B_r(x_n)} |\nabla u_{i,n}|^2 \\
			&\to \int_{\partial B_r(x_0)} u_i \partial_{\nu} u_i - \int_{B_r(x_0)} |\nabla u_i|^2 = 0 \quad \text{as } n \to \infty,
		\end{align*}
		for a.e. $r \in (0, r_0/4)$ and all $1 \leq i \leq k$.
		This implies that $x_0\in \mathcal{R}_{\bm{u}}$, contradicting to $x_0\in \Gamma_{\bm{u}}\setminus \mathcal{R}_{\bm{u}}$. The proof is finished.
	\end{proof}
	\par 
	\begin{proof}[Proof of Theorem \ref{thm:ap-near-gamma-n}]
		The result follows from Lemmas \ref{lem:ap-1} and \ref{lem:ap-2}. 
	\end{proof}
	
	\subsection{Existence of convergent point sequences}
	
	Finally, we use Lemma \ref{lem:decay-Mu-p-general} to prove Proposition \ref{prop:existence-xn}.
	\begin{proof}[Proof of Proposition \ref{prop:existence-xn}]
		Suppose, for contradiction, that $\text{dist}(x_0, \Gamma_n) \geq 2\delta > 0$, where $\delta > 0$ is a constant independent of $n$. Without loss of generality, we may pass to a subsequence (still denoted by $\{\Gamma_n\}$ and $\{\bm{u}_{n}\}$ for simplicity) such that
		\begin{align}\label{Eq:Ab-existence}
			u_{1,n} > u_{i,n} \quad \text{in } B_{\delta}(x_0), \quad \forall 2 \leq i \leq k, \quad \forall n \geq 1.
		\end{align}
		Combining this inequality with system \eqref{Sys:main-interior}, we obtain
		\[
		-\Delta u_{i,n} \leq -\beta_n a_{i1} u_{i,n}^2 \quad \text{in } B_{\delta}(x_0), \quad \forall 2 \leq i \leq k.
		\]
		By Lemma \ref{lem:decay-Mu-p-general}, we then deduce that
		\[
		\sum_{i=2}^k a_{1i} \beta_n u_{i,n} \leq C\delta^{-2} \quad \text{in } B_{\delta/2}(x_0),
		\]
		where $C > 0$ is a constant depending only on $N$ and the matrix $(a_{ij})$. Using this estimate together with the equation satisfied by $u_{1,n}$, we get
		\[
		-\Delta u_{1,n} \geq -C\delta^{-2} u_{1,n} \quad \text{in } B_{\delta/2}(x_0).
		\]
		Passing to the limit as $n \to \infty$, we find that $u_1$ is a non-negative supersolution to $-\Delta u_1 \geq -C\delta^{-2} u_1$ in $B_{\delta/2}(x_0)$ with $u_1(x_0) = 0$. Applying the Strong Maximum Principle, we conclude that $u_1 \equiv 0$ in $B_{\delta/2}(x_0)$. Combining this with \eqref{Eq:Ab-existence}, it follows that $\bm{u} \equiv 0$ in $B_{\delta/2}(x_0)$, which implies $m(x_0) = 0$. This contradicts the fact that $m(x_0) \neq 0$. The proof is thus completed.
	\end{proof}

	\appendix
	\section{Properties of elements of $\mathscr{S}(\Omega)$ in one dimensional case}
	
	In the appendix, we first show that $\mathcal{L}_{1,\bm{u}} = \emptyset$ for any $\bm{u} \in \mathscr{S}(\Omega)$. This result was essentially established in \cite{Terracini-V-Z2019CPAM} for $N=2$, but the proof extends to arbitrary dimensions. For the reader's convenience, we present the proof below.
	
	\begin{lemma}\label{lem:multiplicity-one=empty}
		Let $N\geq 1$ and $\Omega$ be a domain in $\mathbb{R}^N$. For any $\bm{u}\in  \mathscr{S}(\Omega)$, one has $\mathcal{L}_{1,\bm{u}}=\emptyset$.
	\end{lemma}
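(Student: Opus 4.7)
The plan is to argue by contradiction, exploiting the two-sided inequality $-\Delta u_i \leq 0$ and $-\Delta \widehat{u}_i \geq 0$ built into the definition of $\mathscr{S}(\Omega)$, together with the strong maximum principle.

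Suppose for contradiction that $x_0 \in \mathcal{L}_{1,\bm{u}}$. Then by definition, there is a unique index $i_0 \in \{1,\dots,k\}$ such that $|\{u_{i_0}>0\} \cap B_r(x_0)|>0$ for every $r>0$; without loss of generality assume $i_0 = 1$. For every other index $j \neq 1$, there must exist $r_j>0$ such that $|\{u_j>0\} \cap B_{r_j}(x_0)|=0$. Choosing $r := \min_{j\neq 1} r_j > 0$ and using the continuity $u_j \in C_{\text{loc}}(\Omega)$ that is part of the definition of $\mathscr{S}(\Omega)$, I conclude that $u_j \equiv 0$ in $B_r(x_0)$ for every $j\neq 1$.

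Next I would observe that in $B_r(x_0)$ the hat operator collapses to the identity: since $u_j \equiv 0$ for $j\neq 1$,
\begin{equation*}
\widehat{u}_1 = u_1 - \sum_{j\neq 1}\frac{a_{1j}}{a_{j1}}u_j = u_1 \quad \text{in } B_r(x_0).
\end{equation*}
Thus the defining differential inequalities $-\Delta u_1 \leq 0$ and $-\Delta \widehat{u}_1 \geq 0$ both apply to $u_1$ in $B_r(x_0)$, showing that $u_1$ is harmonic there in the distributional (hence, by Weyl's lemma, classical) sense.

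Finally, since $x_0 \in \Gamma_{\bm{u}}$ implies $u_1(x_0)=0$, and $u_1 \geq 0$ is harmonic in $B_r(x_0)$, the strong minimum principle forces $u_1 \equiv 0$ in $B_r(x_0)$. But this gives $|\{u_1>0\} \cap B_r(x_0)| = 0$, contradicting $m_{\bm{u}}(x_0) = 1$. Hence $\mathcal{L}_{1,\bm{u}} = \emptyset$, as desired. There is no substantial obstacle here; the only minor point to be careful about is passing from the a.e. vanishing encoded in the Lebesgue-measure definition of $m_{\bm{u}}$ to pointwise vanishing in $B_r(x_0)$, which is handled by continuity of the components.
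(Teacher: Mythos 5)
Your proposal is correct and follows essentially the same route as the paper's proof: both argue by contradiction, note that the vanishing of all $u_j$ with $j\neq i$ on a small ball makes $\widehat{u}_i$ coincide with $u_i$ there (so the two differential inequalities force harmonicity), and then invoke the strong maximum principle together with $u_i(x_0)=0$ to reach a contradiction. The only cosmetic difference is that you spell out the hat-operator collapse and the choice of $r$ a bit more explicitly, while the paper states these points more tersely.
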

	\begin{proof}
		Suppose by contradiction that  there exists $x_0\in \mathcal{L}_{1,\bm{u}}$. By definition, there exist $r>0$ and $1\leq i\leq k$ such that $B_r(x_0)\subseteq \Omega$ and 
		$$|\{u_i>0\}\cap B_r(x_0)|>0,\quad |\{u_j>0\}\cap B_r(x_0)|=0,~~\forall j\neq i.$$
		Since $-\Delta u_i\leq 0$ in $B_r(x_0)$ and $-\Delta \widehat{u}_i\geq 0$ in $B_r(x_0)$, we deduce that $-\Delta u_i=0$ in $B_r(x_0)$. Note that $u_i\geq 0$ and $|\{u_i>0\}\cap B_r(x_0)|>0$. The strong maximum principle then yields that $u_i>0$ in $B_r(x_0)$, contradicting to the fact that $u_i(x_0)=0$.
	\end{proof}
	Now, we can utilize Lemma \ref{lem:multiplicity-one=empty} to analyze the structure of the free boundaries of elements in $\mathscr{S}(\Omega)$ in one dimension.
	\begin{proposition}
		Let $N=1$, $\Omega$ be a domain in $\mathbb{R}^N$ and $\bm{u}\in  \mathscr{S}(\Omega)$. Then $\bm{u}$ has at most two nontrivial components in $\Omega$. Moreover, $\#\Gamma_{\bm{u}}\leq 1$ and  $\Gamma_{\bm{u}}=\mathcal{L}_{2,\bm{u}}$. 
	\end{proposition}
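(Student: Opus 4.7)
The argument will exploit the one-dimensional setting together with the convexity of each component. First, since each $u_i$ is continuous and $-\Delta u_i \leq 0$ in the distribution sense, $u_i'' \geq 0$ as a distribution, and hence $u_i$ is convex on the interval $\Omega$. The set $\{u_i = 0\} \cap \Omega$, being a sublevel set of a convex function on an interval, is itself an interval. Writing $\Omega = (a,b)$, this forces the positivity set $I_i := \{u_i > 0\}$ to be one of the following: empty, all of $\Omega$, a single one-sided interval of the form $(a,c)$ or $(c,b)$, or the union $(a,c_1) \cup (c_2,b)$ of two one-sided intervals at the ends of $\Omega$.

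Second, I rule out the existence of three or more nontrivial components. If some $I_i$ is of the two-sided type $(a,c_1) \cup (c_2,b)$, then the segregation condition forces every other $u_j$ to be supported inside the interior zero-interval $[c_1,c_2]$, making $\{u_j=0\}$ contain the non-convex set $(a,c_1) \cup (c_2,b)$ --- contradicting the convexity of $\{u_j=0\}$ established above. Hence when more than one component is nontrivial, each $I_i$ must be one-sided; since the $I_i$ are pairwise disjoint and $\Omega$ has only two ends, at most two components can be nontrivial. After relabeling, I may assume these are $u_{i_1}, u_{i_2}$ with $I_{i_1}=(a,c_1)$ and $I_{i_2}=(c_2,b)$ for some $c_1 \leq c_2$.

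The third step, which I expect to be the main technical obstacle, is to show that $c_1 = c_2$. The plan is to exploit both hat conditions simultaneously. Since $u_j \equiv 0$ for $j \notin \{i_1,i_2\}$, expanding $\widehat{u}_{i_l}$ and using $-\widehat{u}_{i_l}'' \geq 0$ for $l=1,2$ yields the pair of measure-valued inequalities
\begin{equation*}
    u_{i_1}'' \leq \frac{a_{i_1 i_2}}{a_{i_2 i_1}}\, u_{i_2}'', \qquad u_{i_2}'' \leq \frac{a_{i_2 i_1}}{a_{i_1 i_2}}\, u_{i_1}'',
\end{equation*}
which together force the equality $u_{i_1}'' = \frac{a_{i_1 i_2}}{a_{i_2 i_1}} u_{i_2}''$ as non-negative Radon measures on $\Omega$. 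Since $\mathrm{supp}(u_{i_1}'') \subseteq \overline{I_{i_1}} \cap \Omega \subseteq (a,c_1]$ and $\mathrm{supp}(u_{i_2}'') \subseteq [c_2,b)$, this equality forces both measures to be supported in $(a,c_1] \cap [c_2,b)$. If $c_1<c_2$ this intersection is empty, making each $u_{i_l}$ affine and (since it vanishes on an interior subinterval) identically zero, contradicting nontriviality. Hence $c_1 = c_2 =: c$, so that $\Gamma_{\bm{u}} = \Omega \setminus (I_{i_1} \cup I_{i_2}) = \{c\}$ with multiplicity $m_{\bm{u}}(c)=2$.

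Finally, for the case of a single nontrivial component $u_{i_1}$, the condition $-\widehat{u}_{i_1}'' \geq 0$ reduces to $u_{i_1}'' \leq 0$ (since $\widehat{u}_{i_1} = u_{i_1}$), which combined with $u_{i_1}'' \geq 0$ forces $u_{i_1}$ to be affine. A non-negative affine function on an interval that vanishes at any interior point is identically zero, so nontriviality yields $u_{i_1} > 0$ throughout $\Omega$ and thus $\Gamma_{\bm{u}} = \emptyset$. Combining the two cases gives $\#\Gamma_{\bm{u}} \leq 1$ and $\Gamma_{\bm{u}} = \mathcal{L}_{2,\bm{u}}$, completing the proof. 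The measure-theoretic rigidity argument in the third step is the only genuinely subtle ingredient; all other steps follow from the one-dimensional convex structure.
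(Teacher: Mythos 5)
Your proof is correct but takes a genuinely different route from the paper's. The paper first establishes a clean-up lemma (Lemma A.1: $\mathcal{L}_{1,\bm{u}}=\emptyset$, proved in arbitrary dimension via the strong minimum principle applied to $u_i=\widehat{u}_i$, which is harmonic near a multiplicity-one point), and then classifies $\Gamma_{\bm{u}}$ by cardinality: $\#\Gamma_{\bm{u}}\geq 2$ would force $\Gamma_{\bm{u}}$ to be a nondegenerate interval whose endpoint(s) in $\Omega$ have multiplicity one, excluded by the lemma; and for $\#\Gamma_{\bm{u}}=1$ a three-point convexity argument rules out multiplicity $\geq 3$ while the lemma rules out multiplicity one. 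Your argument instead works directly with the positivity sets $I_i=\{u_i>0\}$: the genuinely new ingredient is your step three, where you combine \emph{both} hat-conditions to force the identity $u_{i_1}'' = (a_{i_1 i_2}/a_{i_2 i_1})\,u_{i_2}''$ of non-negative Radon measures, whose disjoint supports (if $c_1<c_2$) would make both functions affine and hence identically zero. This sidesteps the multiplicity machinery and Lemma A.1 entirely and is arguably more self-contained for $N=1$, though it is specific to one dimension whereas the paper's Lemma A.1 is dimension-free. One phrasing issue to fix: in your step two, saying that $\{u_j=0\}$ ``contains a non-convex set'' does not of itself contradict convexity; what you actually want is that the interval $\{u_j=0\}$, containing both $(a,c_1)$ and $(c_2,b)$, must contain their convex hull $(a,b)$, so $u_j\equiv 0$ --- which is the conclusion needed, but the justification as written is inaccurate.
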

	\begin{proof}
		Since $N=1$, by definition, we see that $u_i$ is non-negative and convex in $\Omega:=(a,b)$ for any $1\leq i\leq k$. This implies that if $a<x_1<x_2<b$ and $u_i(x_1)=u_i(x_2)=0$ for some $i$, then $u_i\equiv 0$ in $[x_1,x_2]$. 
		Combining this with $u_i\cdot u_j\equiv 0$ for $i\neq j$ then yields that either $\#\Gamma_{\bm{u}}\leq 1$, or $\#\Gamma_{\bm{u}}\geq 2$ and $\Gamma_{\bm{u}}=\mathcal{L}_{0,\bm{u}}\cup \mathcal{L}_{1,\bm{u}}$. We show that the latter case is impossible. Indeed, if $\#\Gamma_{\bm{u}}\geq 2$ and $\Gamma_{\bm{u}}=\mathcal{L}_{0,\bm{u}}\cup \mathcal{L}_{1,\bm{u}}$, then by Lemma \ref{lem:multiplicity-one=empty}, $\Gamma_{\bm{u}}=\mathcal{L}_{0,\bm{u}}\neq\emptyset$. Since $\bm{u}\not\equiv 0$ in $\Omega$, we have that either $y_1:=\inf\{x\in (a,b):x\in \mathcal{L}_{0,\bm{u}}\}>a$ or $y_2:=\sup\{x\in (a,b):x\in \mathcal{L}_{0,\bm{u}}\}<b$. Without loss of generality, we assume that $y_2<b$. Then it is not difficult to verify that $y_2\in \mathcal{L}_{1,\bm{u}}$, in contradiction with Lemma \ref{lem:multiplicity-one=empty}.
		\par 
		Now we have $\#\Gamma_{\bm{u}}\leq 1$. We split the remaining proof into two cases.
		\par 
		\textbf{Case 1.} $\#\Gamma_{\bm{u}}=0$. In this case, $\Gamma_{\bm{u}}=\emptyset$ and hence $\bm{u}$ has exactly one  nontrivial component which is positive in $\Omega$. 
		\par 
		\textbf{Case 2.} $\#\Gamma_{\bm{u}}=1$. We denote $\Gamma_{\bm{u}} = \{x_0\}$ and claim that $m_{\bm{u}}(x_0) = 2$. Indeed, suppose for contradiction that this is not the case; then by Lemma \ref{lem:multiplicity-one=empty}, $m_{\bm{u}}(x_0) \geq 3$. By definition, for any $r > 0$, there exist points $x_1, x_2, x_3 \in B_r(x_0)$ with $x_1 < x_2 < x_3$ and three distinct indices $i_1, i_2, i_3 \in \{1, 2, \dots, k\}$ such that $u_{i_j}(x_j) > 0$ for $1 \leq j \leq 3$. Since $\bm{u}$ is segregated, this implies $u_{i_2}(x_1) = u_{i_2}(x_3) = 0$; hence, by convexity of $u_{i_2}$, we have $u_{i_2} \equiv 0$ on $[x_1, x_3]$, contradicting to $u_{i_2}(x_2) > 0$. As a consequence, $\bm{u}$ has exactly two nontrivial components in $\Omega$. This follows from the fact that if the number of nontrivial components of $\bm{u}$ exceeds two, then $\#\Gamma_{\bm{u}} \geq 2$, which contradicts $\#\Gamma_{\bm{u}} = 1$. The proof is complete.
	\end{proof}

\end{document}